\newcommand{\Nbr}{\mathrm{Nbr}}
\newcommand{\LNbr}{\mathrm{LNbr}}
\newcommand{\RNbr}{\mathrm{RNbr}}
\newcommand{\RED}{\mathsf{RED}}
\newcommand{\Livne}{Livn{\'e}\xspace}
\newcommand{\GLSU}{G_L[S,U]} % previously this was \Gamma_S, which is a bit confusing.
\newcommand{\Out}{\mathrm{Out}}
\newif\ifnotes
\title{Explicit Lossless Vertex Expanders}
\author{Jun-Ting Hsieh\thanks{Carnegie Mellon University. \texttt{juntingh@cs.cmu.edu}. Supported by NSF CAREER Award \#2047933.
This work was done while the author was visiting MIT.}
\and Alexander Lubotzky\thanks{Weizmann Institute, Rehovot, Israel.
\texttt{alex.lubotzky@mail.huji.ac.il}.
Supported by the European Research Council (ERC) under the European Union’s Horizon 2020 (N. 882751), and the research grant from the Center for New Scientists at the Weizmann Institute of Science. This work was done while the author was visiting the department of mathematics at MIT, whose hospitality and support is gratefully acknowledged.}
\and Sidhanth Mohanty\thanks{MIT. \texttt{sidm@mit.edu}. Supported by NSF Award DMS-2022448.}
\and Assaf Reiner\thanks{Hebrew University of Jerusalem, Jerusalem, Israel.
\texttt{assaf.reiner@mail.huji.ac.il}.
Supported by the European Research Council (ERC) under the European Union’s Horizon 2020 (N. 882751).}
\and Rachel Yun Zhang\thanks{MIT.  \texttt{rachelyz@mit.edu}. Supported by NSF Graduate Research Fellowship 2141064. Supported in part by DARPA under Agreement No. HR00112020023 and by an NSF grant CNS-2154149.}}
\date{\today}
\definecolor{denim}{rgb}{0.08, 0.38, 0.74}
\definecolor{classicrose}{rgb}{0.98, 0.8, 0.91}
\definecolor{darkpastelblue}{rgb}{0.47, 0.62, 0.8}
\definecolor{dogwoodrose}{rgb}{0.84, 0.09, 0.41}
\Crefname{theorem}{Theorem}{Theorems}
\Crefname{claim}{Claim}{Claims}
\Crefname{lemma}{Lemma}{Lemmas}
\Crefname{proposition}{Proposition}{Propositions}
\Crefname{corollary}{Corollary}{Corollaries}
\Crefname{definition}{Definition}{Definitions}
\renewcommand{\le}{\leqslant}
\renewcommand{\leq}{\leqslant}
\renewcommand{\ge}{\geqslant}
\renewcommand{\geq}{\geqslant}
\newcommand{\Signature}{\mathrm{Signature}}
\newcommand{\cC}{\mathcal{C}}
\newcommand{\customlabel}[2]{%
   \protected@write \@auxout {}{\string \newlabel {#1}{{#2}{\thepage}{#2}{#1}{}} }%
   \hypertarget{#1}{#2}
}
\newcounter{datacounter}
\newcounter{coingame}
\newcounter{casenumb}
\newcounter{subcasenumb}
\begin{document}

%% TITLE AND ABSTRACT
\sloppy
\maketitle
\begin{abstract}
We give the first construction of explicit constant-degree lossless vertex expanders.
Specifically, for any $\varepsilon > 0$ and sufficiently large $d$, we give an explicit construction of an infinite family of $d$-regular graphs where every small set $S$ of vertices has $(1-\varepsilon)d|S|$ neighbors (which implies $(1-2\varepsilon)d|S|$ unique-neighbors).
Our results also extend naturally to construct biregular bipartite graphs of any constant imbalance, where small sets on each side have strong expansion guarantees.
% These are $d$-regular graphs with the property that every small set $S \subseteq V$ has $(1-\eps) d |S|$ neighbors. \rnote{hm, i'm not super happy atm with the lead in here. it's not clear that two-sided is a strengthening}
% More generally, we construct explicit two-sided lossless expanders of constant degree and any constant imbalance.
% Specifically, for any $\eps > 0$ and large enough $d_1, d_2$, we give an explicit construction of an infinite family of $(d_1,d_2)$-biregular graphs where every small set $S$ on the left has $(1-\eps)d_1|S|$ neighbors and every small set $T$ on the right has $(1-\eps)d_2|T|$ neighbors.
The graphs we construct admit a \emph{free group action}, and hence realize new families of quantum LDPC codes of Lin and M.\ Hsieh \cite{LH22b} with a linear time decoding algorithm.

Our construction is based on taking an appropriate product of a constant-sized lossless expander with a base graph constructed from Ramanujan Cayley cubical complexes.

\end{abstract}

\thispagestyle{empty}
\setcounter{page}{0}

\newpage

% TABLE OF CONTENTS
\enlargethispage{1cm}
\tableofcontents
\pagenumbering{roman}
\newpage
\pagenumbering{arabic}

%% INTRO
\parskip=0.5ex

%% INTRODUCTION
\section{Introduction}

In this work, we give the first construction of explicit constant-degree lossless vertex expanders, thus resolving a longstanding open problem; see, e.g., \cite[Open problem 10.8]{HLW06}, and also \cite{Din24,Sri25}.
Intuitively, a graph exhibits strong vertex expansion if every sufficiently small subset of its vertices has many distinct neighbors.
Formally, a $d$-regular graph $G = (V,E)$ is called a \emph{$\gamma$-vertex expander} if there exists a small constant $\eta > 0$ (depending only on $d$) such that every subset $S \subseteq V$ of size at most $\eta |V|$ has at least $\gamma d |S|$ distinct neighbors.
We will call an infinite family of graphs \emph{lossless expanders} if $\gamma$ can be chosen as $1-\eps(d)$ for $\eps(d)\to 0$ as $d\to\infty$.
Note also that $(1-\eps)$-vertex expansion implies $(1-2\eps)$-unique-neighbor expansion.\footnote{A unique-neighbor of a set $S$ is a vertex with exactly one edge to $S$.
This property is needed in several applications, as even $\frac{1}{2}$-vertex expanders can have small subsets with zero unique-neighbors (see \Cref{sec:history}).
}

Our main result is stated as follows:
\begin{mtheorem}[Constant-degree lossless expanders]
\label{thm:main-intro}
    For every $\eps > 0$, there exists a sufficiently large integer $d_0$ such that for every integer $d\ge d_0$, there is an explicit (deterministic polynomial-time constructible) infinite family of $d$-regular graphs $G$ that are $(1-\eps)$-vertex expanders.
\end{mtheorem}
In fact, we prove a stronger statement: we construct \emph{two-sided lossless expanders of arbitrary constant imbalance}.
Concretely, a $(d_L, d_R)$-biregular bipartite graph $G = (L, R, E)$ is a two-sided lossless expander if any sufficiently small subset $S \subseteq L$ has at least $(1 - \eps)d_L|S|$ neighbors in $R$, and likewise, any sufficiently small subset $S \subseteq R$ has at least $(1 - \eps)d_R|S|$ neighbors in $L$.
More generally, for each constant $\beta\in(0,1]$ and ``many'' large enough $d_L,d_R$ for $d_R\approx \beta d_L$, we construct an infinite family of $(d_L,d_R)$-biregular two-sided lossless expanders; see \Cref{thm:main} for details.
Observe that when $d_L = d_R$, this recovers the above standard notion of lossless expansion.

Our construction also admits a \emph{free group action} by a group of size linear in the number of vertices in the graph, resolving a conjecture of Lin and M.\ Hsieh \cite[Conjecture 10]{LH22b}.
By their work, our construction yields a new family of good quantum LDPC codes, which also admit a linear time decoding algorithm; see \Cref{app:free-action} for details.

\subsection{History of vertex expanders}
\label{sec:history}
% We now put our results into context.

The quest for explicit lossless vertex expanders can be traced back to the seminal work of Sipser and Spielman~\cite{SS96} who identified vertex expansion as an important property for error correction.
In particular, they showed that a \emph{one-sided} lossless expander can be used to construct a good error-correcting code with a linear-time decoding algorithm.
Around the same time, a parallel line of work on distributed routing in networks \cite{Pip93,ALM96,BFU98} identified vertex expansion as a crucial property of networks for designing routing protocols.
At the time, it was well understood that a random graph is a lossless vertex expander with optimal parameters with high probability, but no explicit constructions were known.

\parhead{The quest for explicit constructions I.}
The first work in the direction of obtaining explicit constructions was by Kahale \cite{Kah95}, who proved that any $d$-regular Ramanujan graph is a $(1/2-o(1))$-vertex expander.
Unfortunately, this barely fell short of being useful for applications, which needed small sets to have many \emph{unique-neighbors}.
In the same work, Kahale proved that $1/2$ was an inherent barrier to spectral techniques by constructing a near-Ramanujan graph along with a small subset $S$ of vertices with only $d/2 \cdot |S|$ neighbors, and more strikingly, with \emph{zero} unique-neighbors (see \cite{MM21,KK22,KY24} for similar examples of such graphs).

The first explicit construction of unique-neighbor expanders was given by Alon and Capalbo \cite{AC02}.
Shortly after, in a breakthrough work, Capalbo, Reingold, Vadhan, and Wigderson \cite{CRVW02} gave explicit constructions of one-sided lossless expanders.

% Since then, the problem of constructing (fully) lossless expanders has remained open until the present work, despite being regarded as a cornerstone problem in the area (see, e.g., \cite[Open problem 10.8]{HLW06}, and also \cite{Din24,Sri25}).

\parhead{Applications.}
We refer the reader to \cite{CRVW02} for a detailed treatment of known applications of lossless expanders at the time in coding theory, distributed routing, fault tolerant networks, storage schemes, and proof complexity.

Ever since, the array of applications has expanded: \cite{DSW06,BV09} proved that one can use codes arising from unique-neighbor expanders to construct \emph{robustly testable codes}, and Viderman \cite{Vid13} gave a linear-time decoding algorithm for codes constructed from $2/3$-vertex expanders.
Vertex expanders have also seen applications in high-dimensional geometry:
the works of \cite{GLR10,Kar11,BGI+08,GMM22} used unique-neighbor expanders to construct \emph{$\ell_p$-spread subspaces} and matrices satisfying the \emph{$\ell_p$-isometry property}.
The work \cite{HMP06} gave a construction of a family of deterministic and uniform circuits for computing the (approximate) majority of $n$ bits assuming the construction of fully lossless expanders, not known to exist until the present work.
Motivated by randomness extractors, the works \cite{TUZ07,GUV09} gave constructions of polynomially imbalanced one-sided lossless expanders.

More recently, in the wake of advances on constructing $c^3$-locally testable codes \cite{DEL+22,PK22} and quantum LDPC codes \cite{PK22},
Lin and M.\ Hsieh gave alternate simpler constructions of both these objects: $c^3$-LTCs in \cite{LH22a} based on one-sided lossless expanders, and quantum LDPC codes in \cite{LH22b} based on two-sided lossless expanders with a free group action, whose first construction appears in the present work.

\parhead{The quest for explicit constructions II.}
The work of Lin and M.\ Hsieh \cite{LH22b} renewed interest in constructing vertex expanders, which led to a flurry of new work.
Asherov and Dinur \cite{AD23} gave a simple construction of one-sided unique-neighbor expanders, based on generalizing a construction in \cite{AC02}, which was simplified in a work of Kopparty, Ron-Zewi, and Saraf \cite{KoppartyRZS24}.
Golowich \cite{Gol23} and independently, Cohen, Roth and Ta-Shma \cite{CohenRTS23} proved that their construction instantiated with appropriate parameters in fact yields one-sided lossless expanders.

J.\ Hsieh, McKenzie, Mohanty, and Paredes \cite{HMMP24} generalized a different construction of \cite{AC02} to obtain two-sided unique-neighbor expanders of arbitrary imbalance, which additionally guarantee that sets of size $\exp(O(\sqrt{\log n}))$ expand losslessly.
The work of Chen \cite{Chen2024} built on their construction and improved the expansion guarantees for small polynomial-sized subsets of vertices.
More recently, J.\ Hsieh, Lin, Mohanty, O'Donnell, and Zhang \cite{HLMOZ25} constructed two-sided $(3/5-\eps)$-vertex expanders using construction ideas from \cite{HMMP24} with a base graph based on Ramanujan high-dimensional expanders of \cite{LSV05,LSV05b}, notably presenting the first construction of (two-sided) constant-degree graphs breaking Kahale's spectral barrier.

Using significantly different ideas, Chattopadhyay, Gurumukhani, Ringach, and Zhao \cite{CGRZ2024} studied the bipartite graphs of \cite{KalevT22}, which have polynomially large imbalance, and showed that they have two-sided lossless expansion --- the first construction of two-sided lossless expanders in the unbalanced setting.
% Using this, they also constructed non-bipartite lossless expanders with polynomial degree.
In contrast, we focus on bipartite graphs with constant degrees and constant imbalance.

\subsection{Cubical complexes}
\label{sec:cubical-overview}

Our construction of lossless expanders relies on expanding cubical complexes.
Here, we give a brief overview;
see \Cref{sec:cubical-complexes} for more definitions and properties, and \Cref{sec:cubical-construction} for an explicit construction using LPS Ramanujan graphs~\cite{LPS88}.

The theory of expanding cubical complexes was first studied by Jordan and \Livne \cite{JL00} as a high-dimensional generalization of Ramanujan graphs, where it was shown that infinite families of such complexes exist but no explicit construction was given.
Later, explicit constructions were presented in \cite{RSV19} (in a slightly different form), where more general cases were also treated.
Recently, cubical complexes were used in \cite{DLV24} to construct quantum locally testable codes, and they instantiated the complexes using abelian lifts of expanders \cite{JMOPT22}.

Earlier, a $2$-dimensional version of the cubical complexes, dubbed \emph{left-right Cayley complexes}, was an important ingredient in the constructions of locally testable codes with constant rate, distance and locality, as well as good quantum LDPC codes by \cite{DEL+22,PK22}. For our purposes, we will need higher-dimensional cubical complexes with constant degree and good expansion; notably, these can only be constructed over non-abelian groups.

\parhead{Cayley cubical complex.\footnote{One can define cubical complexes from any set $\Gamma$ and sets of permutations of $\Gamma$. For simplicity, we restrict to Cayley cubical complexes.}}
A $k$-dimensional cubical complex can be constructed from a finite group $\Gamma$ and generating sets $A_1, A_2,\dots,A_k \subseteq \Gamma$ that satisfy
\begin{enumerate}[(1)]
    \item $A_i \cdot A_j = A_j \cdot A_i$ for all $i\neq j$, and
    \item $|A_1 \cdots A_k| = |A_1| \cdots |A_k|$.
\end{enumerate}
Here, we denote $A \cdot B = \{a b: a\in A,\ b\in B\}$.
We call any collection of sets $A_1,\dots,A_k$ satisfying the above \emph{cubical generating sets}.
Note that we require $A_1,\dots,A_k$ to commute as sets while the elements do not necessarily commute.
In particular, for any $a_1\in A_1$ and $a_2 \in A_2$, there exist unique $b_1\in A_1$ and $b_2 \in A_2$ such that $a_1 a_2 = b_2 b_1$.
More generally, for any $\{a_i\in A_i\}_{i\in[k]}$ and any permutation $\pi \in S_k$, there exist unique $\{b_i\in A_i\}_{i\in [k]}$ such that $a_1 a_2 \cdots a_k = b_{\pi(1)} b_{\pi(2)} \cdots b_{\pi(k)}$.
% Note that cubical generating sets are easy to obtain for abelian groups, but it is well-known that constant-degree Cayley graphs on abelian groups cannot be expanders.

Given a group $\Gamma$ and cubical generating sets $A_1,\dots,A_k\subseteq \Gamma$, the \emph{decorated}\footnote{We use the word ``decorated'' since the vertex set $X(0)$ comprises $2^k$ copies of $\Gamma$, unlike traditional Cayley graphs that have only one copy of $\Gamma$.}
\emph{cubical complex}, denoted $X = \Cay(\Gamma; (A_1,\dots,A_k))$, is the complex with vertex set $X(0) = \Gamma \times \zo^{k}$, edges of the form $\{(g, x), (ga_i, x\oplus e_i)\}$ where $g\in \Gamma$ and $a_i\in A_i$, and $k$-faces (or cubes) $X(k)$ of the form $f = \{(f_x, x)\}_{x\in \zo^k}$ where $f_x^{-1} f_{x\oplus e_i} \in A_i$ for each $i\in[k]$ and $x\in\zo^k$.
It is easy to verify that the requirements of cubical generating sets imply that each cube is uniquely specified by a group element $g\in \Gamma$ and $\{a_i\in A_i\}_{i\in[k]}$.
See \Cref{def:cubical-complex} for a formal definition and \Cref{fig:cubical-complex} for an illustration.

\begin{figure}[ht]
    \centering
    % First subfigure
    % \begin{subfigure}{0.45\textwidth}
    %     \centering
    %     \includegraphics[width=\textwidth]{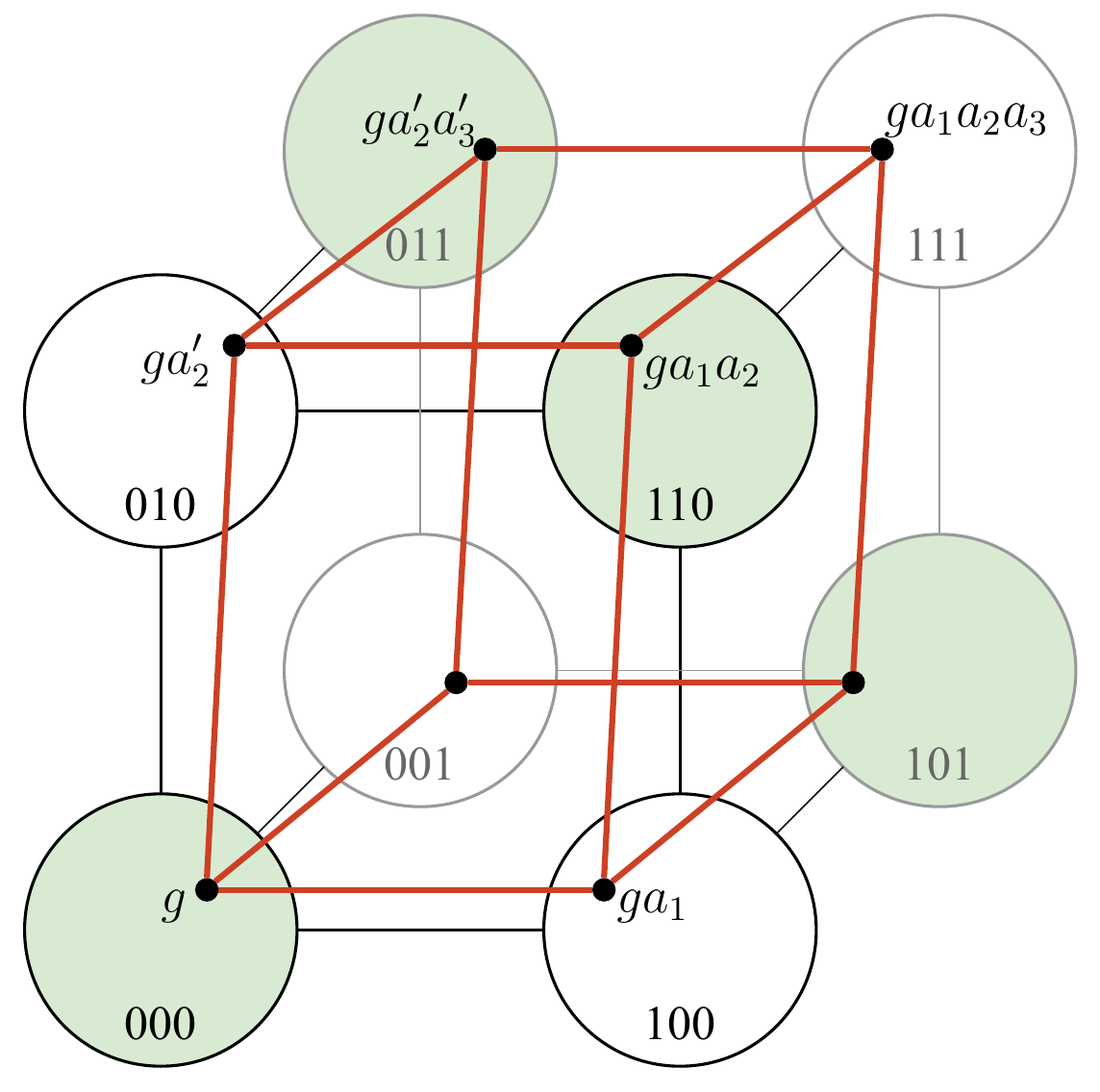}
    %     \caption{A cube in the complex.}
    % \end{subfigure}
    % \hfill
    % Second subfigure
    % \begin{subfigure}{0.3\textwidth}
    %     \centering
    %     \includegraphics[width=\textwidth]{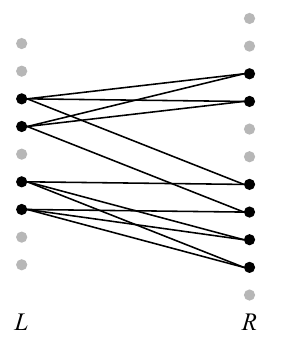}
    %     \caption{The product $G \diamond H$.}
    % \end{subfigure}
    \centering
    \includegraphics[width=0.45\textwidth]{Figures/cubical.pdf}
    \caption{A $3$-dimensional (decorated) cubical complex $X = \Cay(\Gamma; (A_1,A_2,A_3))$, where the vertex set $X(0) = \Gamma \times \F_2^3$.
    An element $g\in \Gamma$ and $a_1 \in A_1$, $a_2\in A_2$, $a_3\in A_3$ uniquely specify a face (or cube) $f \in X(3)$, as depicted in the figure.
    Note that by the properties of $A_1,A_2,A_3$, there exist unique $a_1' \in A_1$, $a_2' \in A_2$ and $a_3' \in A_3$ such that $a_1a_2a_3 = a_2' a_3' a_1'$. \\
    \hspace*{1em} The vertex-face incidence graph we need for our base graph construction will be restricted to a linear code $\calC \subseteq \F_2^k$ of large distance --- the bipartite graph between $X(k)$ and $\Gamma \times \calC \subseteq X(0)$ where edges indicate containment. Here, a code $\{000, 011, 110, 101\}$ is highlighted.}
    \label{fig:cubical-complex}
\end{figure}

We note that it is straightforward to construct cubical complexes using abelian groups since all elements commute.
However, we need the complex to exhibit strong expansion, and it is well known that constant-degree abelian Cayley graphs cannot be expanders  \cite{AR94}.

We construct cubical complexes based on the LPS Ramanujan graphs \cite{LPS88}.
\Cref{sec:cubical-construction} contains an exposition and self-contained proofs of the properties we need.
Here, we briefly recall that given primes $p,q \equiv 1 \pmod{4}$, the LPS graphs $X(p;q)$ are Cayley graphs over $\Gamma = \PSL(2,\F_q)$ with $p+1$ generators $A(p)$.
The Ramanujan cubical complex we construct is simply $\Cay(\Gamma; A(p_1), A(p_2), \dots, A(p_k))$ for distinct primes $p_1,\dots,p_k$.
It is a remarkable fact that $A(p_1),\dots,A(p_k)$ indeed form cubical generating sets as defined above (\Cref{lem:LPS-cubical-generating}).
Moreover, since each Cayley graph $\Cay(\Gamma; A(p_i))$ is Ramanujan (a fact that we will only use as a black box), the resulting Ramanujan cubical complexes also inherit strong expansion properties.

\begin{remark}
    By substituting the (arguably more elementary) cubical complex from~\cite[Section 3.5.2]{DLV24}---derived from abelian lifts of $\Theta(\log n)$-sized Ramanujan Cayley graphs~\cite{JMOPT22}---into our construction, one obtains constant-degree $n$-vertex graphs in which every subset of size $O\parens*{n/\mathrm{polylog}\,n}$ has lossless vertex expansion, and which supports a free group action by a $\Theta(n/\mathrm{polylog}\,n)$-sized group.
\end{remark}

\subsection{Our construction of lossless expanders}
\label{sec:construction-overview}

Our construction is based on the \emph{tripartite line product} framework of~\cite{HMMP24}, which is a generalization of the \emph{line product} introduced in \cite{AC02}.
The first component is an (infinite family of) tripartite base graph $G$ on vertex set $L \cup M \cup R$ (representing the left, middle, and right vertex sets), where we place a $(k, D_L)$-biregular graph $G_L$ between $L$ and $M$, and a $(D_R, k)$-biregular graph $G_R$ between $M$ and $R$.
The second component is a constant-sized gadget graph $H$, which is a $(d_L, d_R)$-biregular graph on vertex set $[D_L] \cup [D_R]$.
The tripartite line product between $G$ and $H$, denoted $Z = G \diamond H$, is the $(k d_L, kd_R)$-biregular graph on $L$ and $R$ obtained as follows: for each vertex $v\in M$, place a copy of $H$ between the $D_L$ left neighbors of $v$ and the $D_R$ right neighbors of $v$ (see \Cref{def:tripartite-line} and \Cref{fig:product} for an illustration).\footnote{We require that for each vertex $v\in M$, there exists a labeling of its left neighbors in $G_L$ and right neighbors in $G_R$ that specifies how to ``place'' the copy of $H$. It is important in our construction that $H$ is \emph{not} placed arbitrarily.}

\begin{figure}[ht]
    \centering
    % First subfigure
    \begin{subfigure}{0.58\textwidth}
        \centering
        \includegraphics[width=\textwidth]{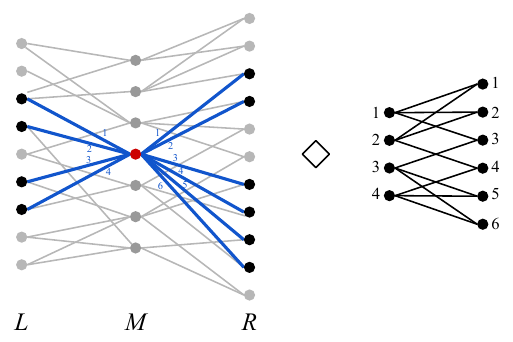}
        \caption{The base graph $G$ and gadget graph $H$.}
    \end{subfigure}
    \hfill
    % Second subfigure
    \begin{subfigure}{0.3\textwidth}
        \centering
        \includegraphics[width=\textwidth]{Figures/product.pdf}
        \caption{The product $G \diamond H$.}
    \end{subfigure}
    \caption{The tripartite line product between a base graph $G$ and gadget graph $H$.
    In this figure, only the edges from the copy of $H$ placed at the red vertex in $M$ are drawn.}
    \label{fig:product}
\end{figure}

Since the gadget graph $H$ is of constant size, we can find an $H$ that satisfies strong expansion properties by brute force.
Since a random biregular graph satisfies our desired properties with high probability, it is convenient to think of $H$ as a random graph.
The bipartite graphs $G_L$ and $G_R$ of the base graph are chosen to be explicit bipartite expanders.
In \cite{HMMP24}, they are chosen to be explicit near-Ramanujan bipartite graphs~\cite{LPS88,Mor94}, while in \cite{HLMOZ25}, they are chosen to be the vertex-face incidence graphs of the $4$D Ramanujan complex from \cite{LSV05,LSV05b}.

In our case, we choose $G_L$, $G_R$ to be ``coded'' vertex-face incidence graphs of expanding cubical complexes described in \Cref{sec:cubical-overview}.
% , which are generalizations of the square complexes used in \cite{DEL+22,PK22}.
% We first give an overview of these complexes.

\parhead{Coded incidence graphs.}
We construct the bipartite base graphs $G_L, G_R$ using a $k$-dimensional Ramanujan cubical complex $X$ and the Hadamard code $\cC \subseteq \zo^k$ (with $|\cC| = k = 2^r$ for some $r\in \N$).
We set $L = X(k)$, the $k$-faces of $X$, and $M = \Gamma \times \cC$, a subset of vertices $X(0)$ according to the code $\cC$.
A $k$-face $f \in L$ and a vertex $(g,x) \in M$ are connected in $G_L$ if and only if $(g,x)\in f$.
Thus, each $f\in L$ has degree $|\cC| = k$, and each vertex in $M$ has degree $D_L = \prod_{i=1}^k |A_i|$.
The other bipartite graph $G_R$ is defined the same way.

\begin{remark}
    Restricting the vertices according to the Hadamard code $\cC$ provides crucial symmetry in our construction.
    In particular, for two vertices $(g, x)$ and $(h, y)$ with $x \neq y\in \cC$, their common neighborhood (i.e., the set of $k$-faces containing them) is either empty or all possible completions to a full cube.
    Since $\mathrm{dist}(x,y) = k/2$ for all $x\neq y\in \cC$,\footnote{We expect that any $\delta$-balanced linear code with a small enough constant $\delta$ will work as well; see \Cref{rem:balanced-code}.}
    the common neighborhoods are all roughly the same structure (by choosing $|A_1|,\dots,|A_k|$ to be a constant factor away from each other).
    We believe that this is one key improvement over \cite{HLMOZ25} which is based on Ramanujan simplicial complexes, where $M$ is also $k$-partite but the common neighborhoods (a.k.a.\ links) of two vertices differ drastically depending on which parts they are in.
\end{remark}

\subsection{Overview of the analysis}
\label{sec:analysis-overview}

Our analysis follows the same outline as \cite{HMMP24,HLMOZ25}.
To bound the expansion of a set $S \subseteq L$ (sets on the right follow the same analysis), we split into two parts: the \emph{left-to-middle} and the \emph{middle-to-right} analysis.
% Suppose the gadget graph $H$ has lossless expansion for sets of size up to $t_H$.
Fix a (small) subset $S \subseteq L$, and consider the neighbors $U = N_{G_L}(S) \subseteq M$.
For each $u\in U$, as long as $\deg_S(u) \coloneqq |S \cap N_{G_L}(u)|$ is sufficiently small, we will have lossless expansion within the gadget placed on $u$ (since the gadget is random-like). On the other hand, if $\deg_S(u)$ is too large, then the gadget cannot experience lossless expansion because the number of right vertices in the gadget is much smaller than the number of edges in the gadget arising from $N_{G_L}(u)$. 
Thus, we split $U$ into $U_{\ell}$ (low-degree) and $U_h$ (high-degree), and we need to show that most elements of $S$ partake in many $U_\ell$ gadgets and few $U_h$ gadgets: precisely, we need to show that $e_{G_L}(S, U_h)$ is small such that $1-\eps$ fraction of edges from $S$ go to $U_\ell$. 

\parhead{Left-to-middle analysis: small-set subcube density.}
We bound the \emph{small-set subcube density} of the cubical complex, similar to the triangle density bound of the Ramanujan simplicial complexes needed in \cite{HLMOZ25}.
Our goal is to show that there are not too many $k$-faces that have many vertices in $U_h$.
More specifically, we upper bound the size of $\{f\in X(k): |f \cap U_h| \geq 2\sqrt{k}\}$ by $O_k(1) \cdot D_L^{5/8} |U_h|$.
This is proved in \Cref{sec:subcube-density} using the structure and expansion of $X$.
More specifically, whereas~\cite{HLMOZ25} used spectral properties within the links of the high dimensional expander to obtain their bounds, our complex notably is not a high dimensional expander as the links are disconnected. Instead, we rely on the Hadamard structure of the links along with a variant of the Loomis--Whitney inequality~\cite{LW49} to argue that $U_h$ contains few subcubes.

To demonstrate the key ideas, we focus on the simple case of $k=3$ ---- subcube density of $3$-dimensional \emph{expanding} cubical complexes with a code $\calC = \{000, 011, 110, 101\} \subseteq \F_2^3$, as depicted in \Cref{fig:cubical-complex}.
For any small subset $U \subseteq \Gamma \times \calC$, we will show an upper bound on the size of $\{f\in X(3): |f \cap U| = 4\}$.
For simplicity, assume that $|A_1| = |A_2| = |A_3| = p$ (this is true in our construction up to absolute constants), and denote $U_x \coloneqq U \cap (\Gamma \times \{x\})$ for $x\in \calC$.

First, we use the expansion property of the cubical complex.
Consider the bipartite graph between $\Gamma \times \{000\}$ and $\Gamma \times \{110\}$, where $(g, 000)$ and $(ga_1a_2, 110)$ are connected for $a_1\in A_1$ and $a_2\in A_2$.
This bipartite graph has degree $|A_1| \cdot |A_2| = p^2$ and has second eigenvalue $O(p)$, which implies that the subgraph induced by $U_{000} \cup U_{110}$ has average degree $O(p)$.
Thus, a typical element $(g, 000) \in U_{000}$ has at most $O(p)$ neighbors in $U_{110}$, $U_{101}$ and $U_{011}$ respectively.
% Thus, the key question is: for a collection of cubes $T \subseteq X(3)$ that contain $(g, 000)$, suppose $|\{f_x: (f_x,x) \in f \text{ for some $f\in T$} \}|$

The next crucial property we use is the fact that any cube $f$ is uniquely identified by any $3$ points in $f \cap (\Gamma \times \calC)$.
For example, $(g, 000)$, $(ga_1 a_2, 110)$ and $(g a_1 a_3, 101)$ uniquely specifies a cube $f \in X(3)$, and in particular, there exist unique $a_2'\in A_2$ and $a_3' \in A_3$ such that $(g a_2' a_3', 011) \in f$.
For simplicity, let us assume that $a_2' = a_2$ and $a_3' = a_3$.
Then, the key question is:
\begin{displayquote}
    For a set of $3$-tuples $T$, suppose $N_{12} = |\{(a_1, a_2): (a_1, a_2, a_3) \in T \text{ for some $a_3$}\}|$ and $N_{13}, N_{23}$ defined similarly, how large can $T$ be?
\end{displayquote}
The answer is $|T| \leq \sqrt{N_{12} N_{23} N_{13}}$.
This is in fact a special case of the \emph{Loomis--Whitney inequality}.
Here, we give a simple proof using an entropic argument.
For the uniform distribution over $T$, we have $H(a_1,a_2,a_3) = \log |T|$, while by assumption $H(a_i, a_j) \leq \log N_{ij}$ for $i < j$.
The well-known Shearer's inequality states that $H(a_1,a_2,a_3) \leq \frac{1}{2} \sum_{i<j} H(a_i, a_j)$, which completes the proof.

Our argument for general $k$ follows the same idea.
The reason that $2\sqrt{k}$ is relevant is because for any subset $B \subseteq \cC$ of a \emph{linear} code $\cC \subseteq \F_2^k$ with $|B| \geq 2\sqrt{|\cC|}$, there exist four distinct elements $\sigma_1,\sigma_2,\sigma_3,\sigma_4\in B$ such that $\sigma_1 + \sigma_2 + \sigma_3 + \sigma_4 = 0$ (\Cref{lem:s1-s2-s3-s4}).
This, at a high level, reduces to the $3$-dimensional case.
We are able to show that $|\{f\in X(k): |f\cap U| \geq 2\sqrt{k}\}| \leq O_k(1) \cdot D_L^{5/8} |U|$.
Thus, by setting the threshold for $U_{\ell}$ and $U_h$ to be larger than $D_L^{5/8}$ and $k = O(1/\eps^2)$, we have that most vertices in $S \subseteq L$ have at least $1 - \frac{2\sqrt{k}}{k} \geq 1-\eps$ fraction of edges going to $U_\ell$.
This completes the left-to-middle analysis.

\parhead{Middle-to-right analysis.}
Having established that most vertices of $S$ participate in many low-degree gadgets, it remains to show that these different gadgets do not have too many collisions in $G_R$.
Our proof of this part closely follows the \emph{middle-to-right analysis} in \cite{HLMOZ25}.
In fact, as noted in \cite{HLMOZ25}, the common neighborhood structure of $G_R$ is the key improvement over \cite{HMMP24} which uses Ramanujan bipartite graphs.

It is convenient to view the expansion of each gadget $H_u$, for $u\in U$, as ``red'' edges going from $u$ to vertices in $N_{G_R}(u) \subseteq R$.
The neighbors of $S$ in the final product $Z$ are exactly the vertices incident to any red edge.
See \Cref{fig:collision-graph} for an example.
The red edges form a subgraph of $G_R$, denoted $\RED$, and we need to show that there are very few collisions on the right.

To this end, we define a \emph{collision} (multi-)graph $C$ on $U$, where we place an edge $\{u,v\}$ for each $u \neq v\in U$ and $r\in R$ such that $\{u,r\}, \{v,r\} \in \RED$ (see e.g.\ \Cref{fig:collision}).
We need to show an upper bound on $e(C)$.
Let $\ul{C}$ be the simple graph obtained by removing duplicated edges from $C$.
Moreover, let $\wt{G}_R$ be the simple graph on $M$ where $u\neq v\in M$ are connected if they have a common neighbor in $R$.
Observe that $\ul{C}$ is a subgraph of $\wt{G}_R$.
Then, the natural idea to bound $e(C)$ is to use the expansion of $\wt{G}_R$, which we call \emph{skeleton expansion} (\Cref{def:skeleton-expansion}).

If $G_R$ is chosen to be a Ramanujan bipartite graph (as in \cite{HMMP24}), then most pairs of vertices in $M$ have few common neighbors, and $\wt{G}_R$ has degree $O(D)$ and second eigenvalue $O(\sqrt{D})$.
In our case, due to the structure of the cubical complexes, every pair of vertices in $M$ has either zero or $\approx \sqrt{D}$ common neighbors, and thus $\wt{G}_R$ has degree $O(\sqrt{D})$ and second eigenvalue $O(D^{1/4})$.
This is the key improvement over \cite{HMMP24}.
Of course, now the collision graph $C$ may have large multiplicities, which complicate the analysis.
We handle this by using the spreadness of the ``random'' gadget $H$ (\Cref{lem:pr-gadget}), and crucially this requires us to place the gadget in the same way for every $u\in M$ (as opposed to arbitrarily).
See \Cref{sec:main-proof} for more details.

\subsection{Discussion and future directions}

In this work, we constructed graphs with good vertex expansion, namely, that every small set of vertices has many neighbors. Notably, by using the high dimensional structure of cubical complexes, we were able to bypass the spectral limitations of considering only the $1$-dimensional structure. A related problem we find fascinating is whether we can construct \emph{edge expanders} beyond what is guaranteed by spectral techniques.

% In this work, we constructed lossless expanders by appealing to the higher dimensional structure of expander-like objects. In some sense, this allowed us to bypass spectral limitations of considering only the $1$-skeleton. Nonetheless, we find the question of constructing ($1$-dimensional) graphs that closely mimic the properties of random graphs beyond what its spectral properties guarantee a fascinating question. We state one such question below.

% An insight from the present work, as well as recent advances in quantum codes \cite{PK22,DLV24}, locally testable codes \cite{DEL+22}, PCPs \cite{BMV24}, and vertex expanders \cite{HLMOZ25}, is that high-dimensional expander-like objects can be an effective amplifier to lift a constant-sized object satisfying certain desirable properties into a large object with the same properties.
% In light of this message, we propose a few open directions for future work.

% \parhead{Euclidean sections.}
% Every vector in a random subspace of dimension $\Omega(n)$ in $\R^n$ is ``delocalized'', but current explicit constructions for these objects are far from optimal; see, e.g., \cite{GLR10,GMM22} and the references therein for a history of this problem, and its applications to error correction over reals, compressed sensing, and nearest neighbors algorithms in high dimensions. \rnote{i think we either need to justify why this problem is related to lossless expansion, or we should omit it.}

\parhead{Ultra-lossless edge expanders.}
In a random $d$-regular graph, any sufficiently small set $S$ has at least $(d-1-\eps)|S|$ edges leaving $S$.
In contrast, small sets $S$ in Ramanujan graphs have $(d-O(\sqrt{d}))|S|$ edges leaving $S$.

We call an expander satisfying the benchmark set by random graphs an \emph{ultra-lossless edge expander}.
One can prove that an ultra-lossless edge expander is also a lossless vertex expander.
While it is unclear if they unlock more applications, we believe explicit constructions of them would likely introduce novel ideas. 

\parhead{High dimensional amplification for further applications?}
An insight from this work, as well as recent advances in quantum codes \cite{PK22,DLV24}, locally testable codes \cite{DEL+22,PK22,LH22a}, PCPs \cite{BMV24}, and vertex expanders \cite{HLMOZ25}, is that high-dimensional expander-like objects can be an effective amplifier to lift a constant-sized object satisfying certain desirable properties into a large object with the same properties. This local-to-global lifting has long been known for (1-dimensional) expanders in many contexts (e.g.\ \cite{SS96,AEL95,GLR10,GMM22}), though for other applications 1-dimensional expansion have not proved sufficient. We hope that the ideas from the present work on the usage of high dimensional structures as a local-to-global amplifier will unlock new applications across theoretical computer science and mathematics.

%% ANALYSIS
\section{Construction of lossless vertex expanders} \label{sec:construction}

Our main result is the construction of explicit two-sided lossless expanders.
We first formally define two-sided vertex expanders.

\begin{definition}\label{def:two-sided}
    A family of $(d_L, d_R)$-biregular bipartite graphs $Z = \{ Z_n = (L_n, R_n, E_n) \}$ is a \emph{two-sided $\gamma$-vertex expander} if there is some $\eta > 0$ depending only on $d_L, d_R, \gamma$ for which the following holds:
    \begin{itemize}
        \item For any $S \subseteq L$ of size $|S| \le \eta \cdot |L|$, $S$ has $\ge \gamma d_L |S|$ neighbors on the right,
        \item For any $T \subseteq R$ of size $|T|  \le \eta \cdot |R|$, $T$ has $\ge \gamma d_R |T|$ neighbors on the left.
    \end{itemize}
    When we can take $\gamma = 1-\eps(d)$ for $\eps(d) \to 0$ as $d\to\infty$, we refer to $Z$ as a \emph{two-sided lossless expander}.
\end{definition}

Our main result is stated below.

\begin{theorem} \label{thm:main}
    For every $\eps, \beta\in (0, 1]$, there exists $k=k(\eps), d_0 = d_0(\eps,\beta) \in \bbN$ such that for any $d_L, d_R \ge d_0$ for which $\beta \le d_L/d_R \le \beta+\eps$, there is an infinite family of graphs $(kd_L, kd_R)$-biregular bipartite graphs $(Z_n)_{n\ge 1}$ for which $Z_n$ is a two-sided $(1-\eps)$-vertex expander on $\Theta(n)$ vertices.
    Additionally, there is an algorithm that takes in a positive integer $n$ as input, and in $\poly(n)$-time outputs $Z_n$.
\end{theorem}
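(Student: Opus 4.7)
The plan is to apply the tripartite line product framework: set $Z_n = G \diamond H$, where the base $G$ has left and right sides $L = R = X(k)$ and middle $M = \Gamma \times \cC$ with $\cC \subseteq \zo^k$ the Hadamard code and $X$ the $k$-dimensional Ramanujan cubical complex of \Cref{sec:cubical-overview}, and $H$ is a constant-sized $(d_L, d_R)$-biregular gadget graph on $[D_L] \cup [D_R]$. I would pick $k = k(\eps)$ to be a power of two of size $\Theta(1/\eps^2)$, and then choose the primes $p_1, \dots, p_k$ (and separately primes for the right-side generators of $X$) so that $D_L = \prod_i |A(p_i)|$ and $D_R$ are large enough and their ratio tracks $\beta$ within $\eps$; the combined product $k d_L, k d_R$ then matches the prescribed degrees. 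Since $|V(H)|$ is constant (depending only on $\eps, \beta$), brute-force enumeration over biregular graphs on $D_L + D_R$ vertices yields an explicit $H$ satisfying the random-like lossless expansion and spread properties needed below (existence follows from a standard probabilistic argument on uniformly random biregular graphs).

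Fix $S \subseteq L$ with $|S| \le \eta |L|$, let $U = N_{G_L}(S)$, and split $U = U_\ell \sqcup U_h$ by whether $\deg_S(u) \le \tau D_L$ for a threshold $\tau = D_L^{-3/8}$. For $u \in U_\ell$, the gadget $H_u$ placed at $u$ will deliver $(1-\eps) d_L \cdot \deg_S(u)$ right-neighbors by the assumed lossless expansion of $H$; for $u \in U_h$ the gadget cannot expand losslessly since $d_R \ll \deg_S(u)$. The first goal (the \emph{left-to-middle step}) is to show $e_{G_L}(S, U_h) \le \eps k |S|$, i.e., that almost all $(f, u)$-incidences with $f \in S$ land in $U_\ell$. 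Here I would invoke the subcube density estimate established in \Cref{sec:subcube-density}, which asserts $|\{f \in X(k) : |f \cap U_h| \ge 2\sqrt{k}\}| \le O_k(1) \cdot D_L^{5/8} |U_h|$. Combined with the trivial double-counting bound $|U_h| \le k|S|/(\tau D_L)$ and the choice of $\tau$, almost every $f \in S$ has at most $2\sqrt{k} \le \eps k$ of its $k$ middle-neighbors in $U_h$, establishing the desired edge bound.

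For the \emph{middle-to-right step}, view the gadget expansion as placing ``red'' edges from each $u \in U_\ell$ to its $(1-\eps) d_L \deg_S(u)$ right-neighbors in $H_u \subseteq G_R$. The set of right-endpoints of red edges is exactly $N_Z(S)$, so it suffices to bound the number of ``collisions'' in the multigraph $C$ on $U_\ell$ whose edge $\{u, v\}$ appears once per shared red-neighbor. The simple graph $\ul{C}$ is a subgraph of the skeleton $\wt{G}_R$ of $G_R$; thanks to the Hadamard restriction on $M$, every non-isolated pair in $\wt{G}_R$ shares $\Theta(\sqrt{D_R})$ common right-neighbors, and $\wt{G}_R$ inherits degree $O(\sqrt{D_R})$ and second eigenvalue $O(D_R^{1/4})$ from the Ramanujan expansion of the cubical complex. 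Combining the skeleton expansion bound on $e(\ul{C})$ with the spread property of the uniformly placed random gadget $H$ (as in the middle-to-right argument of \cite{HLMOZ25}) controls the multiplicities and yields $e(C) \le \eps k d_L |S|$, giving $|N_Z(S)| \ge (1-O(\eps)) k d_L |S|$.

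Two-sidedness follows by symmetry: since both $L$ and $R$ are copies of $X(k)$ attached to $M = \Gamma \times \cC$ by the same kind of coded incidence graph, and $H$ has the required lossless expansion on both sides, the identical argument bounds $|N_Z(T)|$ for small $T \subseteq R$. Explicitness follows because the cubical complex is explicitly constructible from $\mathrm{PSL}(2,\F_q)$ and LPS generators (\Cref{sec:cubical-construction}), and $H$ is produced once by $O_{\eps,\beta}(1)$-time brute force. The step I expect to be the main obstacle is the middle-to-right analysis: the Hadamard structure that makes $\wt{G}_R$ sparse simultaneously makes $C$ a genuine multigraph with potentially large multiplicities, so bounding $e(C)$ requires carefully coupling the skeleton spectral bound on $\ul{C}$ with the gadget spread property, and crucially relies on $H$ being placed identically at every middle vertex rather than via arbitrary labelings.
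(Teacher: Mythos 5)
Your proposal follows essentially the same route as the paper: the tripartite line product of coded cubical incidence graphs (over two Ramanujan cubical complexes on the same $\Gamma = \PSL(2,\F_q)$ sharing the middle set $M = \Gamma\times\calH_k$) with a brute-forced pseudorandom gadget, a left-to-middle step via the $O_k(D^{5/8})$ subcube-density bound, and a middle-to-right step bounding $e(\RED)-e(C)$ via skeleton expansion plus the gadget spread property. One quantitative slip is worth flagging: with your degree threshold set exactly at $\tau D_L = D_L^{5/8}$, the left-to-middle step only yields
\[
\abs*{\braces*{f : |f\cap U_h|\ge 2\sqrt{k}}} \le O_k(1)\cdot D_L^{5/8}\cdot|U_h| \le O_k(1)\cdot D_L^{5/8}\cdot\frac{k|S|}{D_L^{5/8}} = O_k(1)\cdot k|S|\mcom
\]
which exceeds $|S|$ and is vacuous. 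The threshold must beat the neighbor-expansion parameter by a growing factor: the paper takes it to be $\tau/\delta$ with $\tau = O(D^{5/8})$ and $\delta = o_D(1)$, so that the fraction of edges of $S$ entering high-degree gadgets is $O(\sqrt{\delta})$. This is an immediate fix and does not change the architecture. A second, smaller omission: since the cubical complex only produces degrees of the form $\prod_i(p_i+1)$, hitting the exact degrees $D_L,D_R$ dictated by $D_Ld_L=D_Rd_R$ requires trimming faces by signature (as in the paper's \Cref{sec:proof-lem-base-graph}), which you do not address; choosing primes alone does not suffice.
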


\begin{remark}
    In the special case where $d_L = d_R = d$, the construction can be made $d$-regular for \emph{any} $d \ge d_0(\eps)$ (as stated in \Cref{thm:main-intro}).
    The trick is to begin with a $\wt{d}$-bipartite graph $G$ guaranteed by \Cref{thm:main} where $\wt{d} \in \bracks*{d, \parens*{1+\frac{1}{k-1}}d }$.
    Since $G$ is bipartite, it can be decomposed into $\wt{d}$ edge-disjoint perfect matchings.
    By taking the union of any $d$ of these matchings, we obtain a $d$-regular subgraph.
    Such a $d$-regular subgraph can be seen to incur only a negligible loss in expansion.
\end{remark}

As mentioned in the introduction, our construction also admits a \emph{free group action} by a group of size linear in the number of vertices in the graph.
By the work of \cite{LH22b}, our construction yields a new family of good quantum LDPC codes that admit linear-time decoding algorithms; see \Cref{app:free-action} for details.

Our construction of lossless expanders is based on the \emph{tripartite line product}, introduced in \cite{HMMP24}.
See \Cref{fig:product} for an example.

\begin{definition}[Tripartite line product]
\label{def:tripartite-line}
    Given the ingredients:
    \begin{itemize}
        \item two bipartite \emph{base graphs}, a $(k,D_L)$-biregular graph $G_L = (L, M, E_L)$, and a $(k,D_R)$-biregular graph $G_R = (R, M, E_R)$, along with injective functions $\mathrm{LNbr}_u:[D_L]\to L$ and $\mathrm{RNbr}_u:[D_R]\to R$ for every vertex $u\in M$ that index the left and right neighbors of $u$, 
        \item a $(d_L,d_R)$-biregular \emph{gadget graph} $H$ where the left-hand side is $[D_L]$, and the right-hand side is $[D_R]$,
    \end{itemize}
    we define the \emph{tripartite line product} of $(G_L,G_R)$ and $H$ as the $(kd_1,kd_2)$-biregular graph $Z$ obtained by taking each middle vertex $u\in M$, and placing a copy of $H$ between the left and right neighbors of $u$.
    Specifically, for every edge $(i,j)\in H$, we place an edge between $\mathrm{LNbr}_u(i)$ and $\mathrm{RNbr}_u(j)$.
\end{definition}

Our construction is obtained as the tripartite product of bipartite graphs arising from \emph{Ramanujan cubical complexes} with a constant-sized gadget graph, which can be thought of as a random graph.

\subsection{Base and gadget graph constructions}

In this section, we describe the precise properties we will need from the bipartite graphs and the gadget graph.

\parhead{Notation, terminology, and parameters.}
Given a graph $G$ and $S,T\subseteq V(G)$, we use $G[S]$ to refer to the induced subgraph of $G$ on $S$, and $G[S,T]$ as the induced bipartite subgraph of $G$ between $S$ and $T$.
Given a bipartite graph $(U,V,E)$, we denote an edge between a vertex $u\in U$ and $v\in V$ by the ordered tuple $(u,v)$.

In our construction, the parameters $k, D_L, D_R, d_L, d_R$ are all constants (large enough depending on $\eps,\beta$) compared to the size of the base graphs.
However, it is convenient to treat $k \approx \eps^{-2}$ as fixed while $d_L, d_R$ and $D \coloneqq D_L + D_R$ grow (as we want constructions for infinitely many degrees), and we will use $o_D(1)$ to denote a quantity that can be made smaller than any constant by making $D$ a large enough constant.

\parhead{Base graph construction.}
Following \cite{HLMOZ25}, we introduce the notion of a \emph{structured bipartite graph}.
\begin{definition}[Structured bipartite graph]
\label{def:structured-bipartite-graph}
    A $(k,D)$-biregular bipartite graph $G$ between vertex sets $V$ and $M$ is a \emph{structured bipartite graph} if:
    \begin{enumerate}[(1)]
        \item For each vertex $u\in M$, there is an injective function $\mathrm{Nbr}_u:[D]\to V$ that specifies an ordering of the $D$ neighbors of $u$.
        \item The set $M$ can be expressed as a disjoint union $\sqcup_{a\in[k]}M_a$ such that each $v\in V$ has exactly one neighbor in each $M_a$.
        \item \label{property:special-sets}
        There is an $s\in \N$ such that the following holds:
        for each pair of distinct $a, b\in [k]$, there are $r(a,b)$ \emph{special sets} $\{Q_i^{a,b} \subseteq [D]\}_{i\in[r(a,b)]}$ that partition $[D]$ (abbreviated to $r$ and $Q_i$), each $|Q_i| \in [\frac{D}{2s}, \frac{2D}{s}]$,
        such that for every $u \in M_a$, there are distinct $v_1,\dots,v_r \in M_b$ with $N(u)\cap N(v_i) = \Nbr_u(Q_i)$ for each $i\in[r]$ and $N(u) \cap N(v') = \varnothing$ for all other $v'\in M$.
    \end{enumerate}
\end{definition}

Intuitively, \Cref{property:special-sets} of \Cref{def:structured-bipartite-graph} means that for every $u\in M_a$, there are $r(a,b)$ vertices in $M_b$ that have common neighbors with $u$, and the common neighborhoods form a specific structure.
See \Cref{fig:special-sets} for an illustration.
For our construction, it is important that this structure is the same across all $u\in M_a$  --- the special sets $\{Q_i \subseteq [D]\}$ are independent of $u$ (but can depend on $a,b\in [k]$).

Henceforth, we fix $G$ as a structured $(k,D)$-biregular graph between $V$ and $M$.
\begin{definition}[Small-set $j$-neighbor expansion]
    We say $G$ is a \emph{$\tau$-small-set $j$-neighbor expander} if for some small constant $\eta > 0$, and for every $U\subseteq M$ such that $|U|\le \eta |M|$, the number of vertices in $V$ with at least $j$ neighbors in $U$ is bounded by $\tau\cdot|U|$.
\end{definition}

\begin{definition}[Small-set skeleton expansion]
\label{def:skeleton-expansion}
    Let $\wt{G}$ be the simple graph on $M$ obtained by placing an edge between $u,u'\in M$ if there exists a length-$2$ path between $u$ and $u'$.
    We say $G$ is a \emph{$\lambda$-small-set skeleton expander} if for some small constant $\eta > 0$, and for every $U\subseteq M$ such that $|U|\le \eta |M|$, the largest eigenvalue of the adjacency matrix of the graph $\wt{G}[U]$ is at most $\lambda$.
\end{definition}

We now state the guarantees we can achieve in a structured bipartite graph, which we prove in \Cref{sec:subcube-density}.
\begin{lemma}   \label{lem:base-graph}
    For every $k$ that is a power of $2$, and large enough $D\in\bbN$, there is an algorithm that takes in $n,D_L,D_R\in\bbN$ as input where $D_L,D_R\le D$, and constructs vertex sets $L, M, R$ such that $|M| = \Theta(n)$ and $|R| = |L|\cdot D_L/D_R$ along with structured bipartite graphs $G_L$ on $(L,M)$, $G_R$ on $(R,M)$, where $G_L$ is $(k,D_L)$-biregular and $G_R$ is $(k,D_R)$-biregular, with the following properties:
    \begin{itemize}
        \item $s = \Theta(\sqrt{D})$ for the special set structure.
        \item $G_L$ and $G_R$ are $O\parens*{D^{5/8}}$-small-set $2\sqrt{k}$-neighbor expanders.
        \item $G_L$ and $G_R$ are $O\parens*{D^{1/4}}$-small-set skeleton expanders.
    \end{itemize}
\end{lemma}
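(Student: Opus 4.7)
The plan is to instantiate $(G_L,G_R)$ as the vertex--face incidence graphs of Ramanujan cubical complexes over $\Gamma=\PSL(2,\F_q)$, with the middle layer restricted by the Hadamard code. Concretely, choose $2k$ distinct primes $p_1,\dots,p_{2k}\equiv 1\pmod 4$ such that $\prod_{i\le k}(p_i+1)$ and $\prod_{i>k}(p_i+1)$ lie within constant factors of $D_L$ and $D_R$, and set $X_L=\Cay(\Gamma;A(p_1),\dots,A(p_k))$, $X_R=\Cay(\Gamma;A(p_{k+1}),\dots,A(p_{2k}))$. Fix the Hadamard code $\cC\subseteq\F_2^k$ of length and size $k$; take $M=\Gamma\times\cC$, $L=X_L(k)$, $R=X_R(k)$, with edges given by vertex--face incidence. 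The neighbor orderings $\Nbr_u$ come from the canonical parameterization of each cube at $u\in M$ by its tuple $(a_1,\dots,a_k)$, and $|\Gamma|$ is chosen so that $|M|=\Theta(n)$.

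The structured-bipartite-graph properties of \Cref{def:structured-bipartite-graph} for $G_L$ are immediate: decompose $M=\bigsqcup_{x\in\cC}M_x$ with $M_x=\Gamma\times\{x\}$, so each $k$-face has exactly one vertex in each $M_x$. For distinct $x,y\in\cC$, letting $I=\mathrm{supp}(x\oplus y)$ (of size $k/2$ by Hadamard), the pair $u=(g,x)$, $v=(g',y)$ shares a common cube iff $g^{-1}g'$ lies in the set product $A_I:=\prod_{i\in I}A(p_i)$, which has size $\Theta(\sqrt{D_L})$ without collisions; each such pair shares exactly $\prod_{i\notin I}|A(p_i)|=\Theta(\sqrt{D_L})$ cubes, partitioned naturally into special sets indexed by $\{a_i\}_{i\in I}$. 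This yields $\Theta(\sqrt{D_L})$ special sets of size $\Theta(\sqrt{D_L})$, so $s=\Theta(\sqrt{D_L})$, and likewise $s=\Theta(\sqrt{D_R})$ for $G_R$.

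The main technical step, and chief obstacle, is the small-set $2\sqrt{k}$-neighbor expansion bound. For each $k$-face $f$ with $|f\cap U|\ge 2\sqrt{k}$, the set $B_f=\{x\in\cC:f\cap M_x\cap U\ne\emptyset\}$ has size $\ge 2\sqrt{k}$, so by the combinatorial lemma on linear codes (\Cref{lem:s1-s2-s3-s4}) it contains distinct codewords with $\sigma_1+\sigma_2+\sigma_3+\sigma_4=0$. Since $\sigma_1,\sigma_2,\sigma_3$ are linearly independent in $\F_2^k$ and a cube is determined by its vertices at any three independent positions, bad faces can be charged to 3-tuples $(v_1,v_2,v_3)\in U_{\sigma_1}\times U_{\sigma_2}\times U_{\sigma_3}$ (with $U_x:=U\cap M_x$) lying on a common cube. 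Shearer's inequality bounds the count of such 3-tuples by $\sqrt{N_{12}N_{13}N_{23}}$, where $N_{ij}$ is the edge count in the bipartite Cayley graph between $U_{\sigma_i}$ and $U_{\sigma_j}$ with shift set $A_{\mathrm{supp}(\sigma_i\oplus\sigma_j)}$. The key spectral input is that this Cayley graph has degree $\Theta(\sqrt{D_L})$ and second singular value $O(D_L^{1/4})$: the cubical generating-set property makes the operators $\sum_{a\in A(p_i)}L_a$ pairwise commute, so on any non-trivial irreducible representation $\rho$ of $\Gamma$ one has $\bigl\|\prod_{i\in I}\sum_{a\in A(p_i)}\rho(a)\bigr\|\le\prod_{i\in I}2\sqrt{p_i}=O(D_L^{1/4})$ by applying the Ramanujan bound to each factor. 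Plugging into the expander mixing lemma, summing over the $O_k(1)$ codeword 4-tuples, and carefully balancing the ``trivial'' term $\sqrt{D_L}|U_x||U_y|/|\Gamma|$ against the ``non-trivial'' term $D_L^{1/4}\sqrt{|U_x||U_y|}$, yields the advertised $O_k(1)\cdot D_L^{5/8}|U|$ bound, for $\eta$ small enough in $D_L$ and $k$.

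Finally, the small-set skeleton expansion bound follows a spectral-plus-truncation argument. The skeleton graph $\widetilde G_L$ on $M$ decomposes as a $k\times k$ block operator with $(x,y)$-block $B_{xy}=\sum_{a\in A_{\mathrm{supp}(x\oplus y)}}L_a$; on any non-trivial $\Gamma$-representation the same commuting-Ramanujan calculation gives $\|B_{xy}\|=O(D_L^{1/4})$, so $\widetilde G_L$ has spectral norm $O(D_L^{1/4})$ on the orthogonal complement of the $k$-dimensional layer-constants subspace. On the layer-constants subspace $\widetilde G_L$ has top eigenvalue $\Theta(k\sqrt{D_L})$ but rank $\le k$, so for any $v$ supported on $U$ with $|U|\le\eta|M|$, Cauchy--Schwarz gives that the projection of $v$ onto this subspace has squared norm at most $(\max_x|U_x|/|\Gamma|)\|v\|^2\le\eta k\|v\|^2$; hence the trivial contribution to $v^\top\widetilde G_L v$ is at most $\eta k^2\sqrt{D_L}\|v\|^2$. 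Taking $\eta$ small enough (depending on $D_L,k$) makes this $O(D_L^{1/4})$, so $\lambda_{\max}(\widetilde G_L[U])=O(D_L^{1/4})$; the argument for $G_R$ is symmetric.
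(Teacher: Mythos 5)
Your overall architecture matches the paper's: LPS-based cubical complexes for the two sides, the Hadamard-coded vertex--face incidence graphs, special sets indexed by subcube signatures, skeleton expansion from the Ramanujan bound on each pair-of-codewords bipartite layer plus a small-support truncation of the trivial eigenspace, and a Loomis--Whitney/Shearer argument combined with the expander mixing lemma for the $2\sqrt{k}$-neighbor expansion. The skeleton-expansion and special-set portions are essentially correct as written.

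However, there is a genuine quantitative gap in the main step. You bound the number of bad triples globally by $\sqrt{N_{12}N_{13}N_{23}}$ and then feed the expander mixing lemma into each $N_{ij}$. Since the dominant term of the mixing lemma gives $N_{ij} \le O(D^{1/4})\sqrt{|U_{\sigma_i}||U_{\sigma_j}|}$, the product telescopes to $\sqrt{N_{12}N_{13}N_{23}} \le O(D^{3/8})\,(|U_{\sigma_1}||U_{\sigma_2}||U_{\sigma_3}|)^{1/2}$, which in the worst case is $O(D^{3/8})|U|^{3/2}$ --- superlinear in $|U|$. No balancing of trivial versus non-trivial terms repairs this, and a bound growing like $|U|^{3/2}$ is useless for the lemma: the left-to-middle analysis needs $|S_{\ge 2\sqrt{k}}| \le \tau |U_h|$ with $\tau = O(D^{5/8})$ a constant independent of $n$, whereas $|U|$ can be as large as $\Theta(|M|/D)$. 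The paper closes this by localizing the entropy argument: for each fixed $u \in U_{\sigma_1}$ it bounds the number of bad subcubes through $u$ by $\nu^{3/2}$ where $\nu = \max_{s\in\{\sigma_2,\sigma_3,\sigma_4\}}|N_s(u)\cap U|$ (Lemma~\ref{lem:F(u;U)}), and separately shows via the mixing lemma that the number of $u$ with $\nu \approx 2^{\alpha}$ is at most $O_k(1)\min\{1,\sqrt{D}/2^{2\alpha}\}|U|$ (Lemma~\ref{lem:bound-Ussprime}); summing over dyadic levels $\alpha$ converts the $3/2$-power into a bound linear in $|U|$. Without this per-vertex bucketing your argument does not yield the stated $O(D^{5/8})|U|$.

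Two secondary issues. First, the lemma requires $G_L$ to be exactly $(k,D_L)$-biregular, but choosing primes only gets $\prod_i(p_i+1) = \Theta(D_L)$; the paper fixes this by discarding faces so that each middle vertex keeps exactly $D_L$ prescribed signatures, and the neighbor maps $\LNbr_u$ must then be defined by signature (which your ``canonical parameterization'' essentially is, but the pruning step is missing). Second, ``$\sigma_1,\sigma_2,\sigma_3$ linearly independent'' is not the relevant condition; what is used is that $\Delta(\sigma)$ has the $a\cup b$, $a\cup c$, $b\cup c$ support structure of Claim~\ref{claim:abc} and that a $C_\sigma$-face is determined by the three vertices, with each such subcube extending to exactly $\prod_{i\notin\Delta(\sigma)}|A_i| = \Theta(D^{1/4})$ full faces.
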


\parhead{Gadget graph construction.}
The reader should think of the gadget graph as a random graph.
Its properties were analyzed in \cite{HMMP24,HLMOZ25}, which we articulate in the following statement.
\begin{lemma}[{\cite[Lemma 2.10]{HLMOZ25}}]   \label{lem:pr-gadget}
    Let $D_L,D_R,d_L,d_R,k,s$ be integers such that $D_L\cdot d_L = D_R \cdot d_R$, and $k \leq D^{0.1} \leq d_L,d_R \leq o_D(D)$ where $D \coloneqq D_L + D_R$.
    Suppose for any distinct $a,b\in [k]$,
    there is an $r(a,b)\in \N$ and a partition $(Q_i^{a,b})_{i\in[r(a,b)]}$ of $[D_R]$ where each partition has size within $\bracks*{\frac{D}{2s}, \frac{2D}{s}}$.
    Then, there exists a bipartite graph $H$ on $[D_L]\cup[D_R]$ such that
    \begin{itemize}
        \item \textbf{(lossless expansion)} for any $A\subseteq[D_L]$ with $|A| \leq o_D(1)\cdot D_R/d_L$, we have $|N(A)| \ge (1-o_D(1))d_L|A|$,
        \item \textbf{(spread)} for any distinct $a,b\in [k]$,
        for any $A\subseteq[D_L]$ and any $W\subseteq [r(a,b)]$ with $|W| \ge \frac{s\log D}{d_L}$,
        \[
            \sum_{i\in W} |N(A)\cap Q_i| \le 32 |W| \cdot \max\braces*{ \frac{d_L|A|}{s},\ \log D }\mper
        \]
    \end{itemize}
    Additionally, $H$ satisfies the above guarantees when the roles of ``$L$'' and ``$R$'' are swapped.
\end{lemma}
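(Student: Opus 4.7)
The plan is to establish existence of $H$ via the probabilistic method. Sample $H$ uniformly at random from the $(d_L, d_R)$-biregular bipartite graphs on $[D_L] \cup [D_R]$, which is well-defined since $D_L d_L = D_R d_R$. For analytical convenience one works in the configuration model: attach $d_L$ half-edges at each left vertex and $d_R$ half-edges at each right vertex, and pair them via a uniformly random perfect matching. In this model, indicators of edges landing in specified target subsets are negatively associated, so standard Chernoff-type tail bounds apply.

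For the lossless expansion property, fix $A \subseteq [D_L]$ with $a := |A| \leq o_D(1) \cdot D_R / d_L$ and a candidate neighborhood $B \subseteq [D_R]$ of size $s_B := (1 - \eps) d_L a$ for $\eps = o_D(1)$. The event $N(A) \subseteq B$ forces all $d_L a$ edges incident to $A$ to land in $B$; by negative association this has probability at most $(s_B / D_R)^{d_L a}$. A union bound over all $A$ (at most $\binom{D_L}{a}$ choices) and all $B$ (at most $\binom{D_R}{s_B}$ choices), followed by summing over $a$ up to the threshold, yields failure probability $o(1)$ once $d_L$ is a sufficiently large constant. This is the standard argument from \cite{HMMP24}.

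For the spread property, fix distinct $a, b \in [k]$, a set $A \subseteq [D_L]$, and a subset $W \subseteq [r(a,b)]$ with $|W| \geq s \log D / d_L$, and let $Q_W := \bigsqcup_{i \in W} Q_i^{a,b}$ so that $|Q_W| \leq 2 |W| D / s$. The key observation is
\[
\sum_{i \in W} |N(A) \cap Q_i| \leq e_H(A, Q_W),
\]
and the expectation of $e_H(A, Q_W)$ is at most $d_L |A| \cdot |Q_W| / D_R = O(d_L |A| |W| / s)$. A Chernoff bound for negatively associated indicators then yields
\[
\Pr\!\bigl[e_H(A, Q_W) \geq 32 |W| \max(d_L |A| / s,\, \log D)\bigr] \leq \exp\!\bigl(-\Omega(|W| \max(d_L |A| / s,\, \log D))\bigr).
\]
Since $r(a,b) \leq 2 s$, a union bound over the $k^2$ pairs $(a,b)$, all $A \subseteq [D_L]$, and all such $W$ closes, as the hypothesis $|W| \geq s \log D / d_L$ forces the tail exponent to dominate the entropy terms. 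The symmetric ``swapped'' conditions, where the roles of left and right are exchanged, are proved identically.

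The main technical obstacle is verifying that this union bound closes uniformly across all ranges of $|W|$ and $|A|$: the Chernoff exponent must dominate $\log \binom{r(a,b)}{|W|} + \log \binom{D_L}{|A|}$ simultaneously, and this is precisely where the parameter constraints $k \leq D^{0.1}$, $d_L \leq o_D(D)$, and the lower bound $|W| \geq s \log D / d_L$ become essential. Once every bad event has probability $o(1)$, a random $H$ satisfies all stated conditions with positive probability, and since $H$ has constant size, such an $H$ can be exhibited via exhaustive search.
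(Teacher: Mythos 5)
Your proposal is correct and follows essentially the same route as the source of this lemma: the paper itself imports it as a black box from \cite{HLMOZ25} (Lemma 2.10), and the proof there is exactly the probabilistic argument you describe --- a uniformly random $(d_L,d_R)$-biregular graph analyzed via the configuration model, with the lossless-expansion bound from the standard $\Pr[N(A)\subseteq B]\le(|B|/D_R)^{d_L|A|}$ union bound and the spread bound from a Chernoff/negative-association tail on $e_H(A,Q_W)$, where the hypothesis $|W|\ge s\log D/d_L$ is precisely what makes the tail exponent $\Omega(|W|\max\{d_L|A|/s,\log D\})$ absorb the entropy of choosing $A$ and $W$. You also correctly note that the constant-sized $H$ is then found by exhaustive search, matching the paper's remark that the gadget is obtained by brute force.
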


The spread condition above can be interpreted as follows:
for any $A \subseteq [D_L]$ not too small, it has at most $d_L |A|$ neighbors, and any $|W|$ special sets contain at most an $O\parens*{\frac{|W|}{s}}$ fraction of them.

\subsection{Proof of \texorpdfstring{\Cref{thm:main}}{Theorem~\ref{thm:main}}}
\label{sec:main-proof}

We are now ready to use the above ingredients to prove \Cref{thm:main} on the explicit construction of $2$-sided lossless vertex expanders.
Given $\eps$, $d_L$ and $d_R$, we choose parameters $D, D_L,D_R,k \in \N$ and $\delta \in (0,1)$ such that the following relations hold.
\begin{itemize}
    \item $D_L \cdot d_L = D_R\cdot d_R$.
    \item $D = D_L + D_R$.
    \item $k \ge 16/\eps^2$ and is a power of $2$.
    \item $\displaystyle D^{-1/16}\le \delta \le o_D(1) \cdot \frac{1}{k^2}$.
    \item $\displaystyle\frac{D^{1/4}\log^2 D}{\delta} \le d_L,d_R \le \frac{\delta D^{3/8}}{\log D}\mper$
\end{itemize}
Here, we assume $d_L, d_R \geq d_0(\eps,\beta)$ for a large enough $d_0(\eps,\beta)$ such that any $o_D(1)$ term is sufficiently small.

Let $G_L = (L,M,E_L)$ and $G_R = (R,M,E_R)$ be the structured bipartite graphs constructed from the algorithm in \Cref{lem:base-graph} with parameters $k, D, n, D_L, D_R$.
Recall that $G_L$ and $G_R$ are structured bipartite graphs with $s = \Theta(\sqrt{D})$ for the special set structure and are $O\parens*{D^{5/8}}$-small-set $2\sqrt{k}$-neighbor expanders, and $O\parens*{D^{1/4}}$-small-set skeleton expanders.
In this proof, we will use $\tau = O(D^{5/8})$ to denote the small-set $2\sqrt{k}$-neighbor expansion, and $\lambda = O(D^{1/4})$ to denote the small-set skeleton expansion.

Let $H$ be a $(d_L,d_R)$-biregular bipartite graph on $[D_L]\cup[D_R]$ whose special subsets of $[D_R]$ are identical to the special subsets associated to $G_R$, and whose special subsets of $[D_L]$ are identical to the special subsets associated to $G_L$.

Looking ahead, we will need that
\begin{itemize}
    \item $\tau \leq o_D(\delta) \cdot \frac{D_R}{d_L}$ and similarly $\tau \leq o_D(\delta) \cdot \frac{D_L}{d_R}$.
    \item $\lambda \leq s\delta$,
    \item $d_L, d_R \geq \frac{1}{\delta} \max\{\lambda, \sqrt{s}\} \log D$.
\end{itemize}
One can verify that with parameters $\tau = O(D^{5/8})$, $\lambda = O(D^{1/4})$ and $s = \Theta(\sqrt{D})$ from \Cref{lem:base-graph}, our choice for $\delta$ and $D_L, D_R$ listed above satisfy all requirements.

We output the tripartite line product $Z = (L,R,E_Z)$ of $(G_L,G_R)$ with $H$.
We will establish vertex expansion of small subsets of $L$; the analysis of the vertex expansion of small subsets of $R$ is similar.

\parhead{Left-to-middle analysis.}
Let $S\subseteq L$ such that $|S| \le \eta|L|$.
Let $U\subseteq M$ be the neighbors of $S$ in $G_L$.
% , and define $\Gamma_S\coloneqq G_L[S,U]$.
We split $U$ into its ``high-degree'' part $U_h \coloneqq \braces*{v \in U :\deg_{\GLSU}(v)\ge \frac{\tau}{\delta}}$, and ``low-degree'' part $U_{\ell}\coloneqq U\setminus U_h$.

Our first step is to prove that most edges from $S$ to $U$ point to $U_{\ell}$.
\begin{claim}   \label{claim:left-to-middle}
    The number of edges in $\GLSU$ incident to $U_{\ell}$ is at least $\parens*{1-\sqrt{\delta} - 2k^{-1/2}}\cdot k |S|$.
\end{claim}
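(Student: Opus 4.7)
Since each vertex in $S$ has degree $k$ in $G_L$, the total number of edges $e_{G_L}(S,U)$ equals $k|S|$. The plan is therefore to upper bound $e_{G_L}(S,U_h)$ by $(\sqrt{\delta} + 2k^{-1/2})\cdot k|S|$ and subtract. The only properties of $G_L$ we will use are the degree condition and the $\tau$-small-set $2\sqrt{k}$-neighbor expansion from \Cref{lem:base-graph}; the key trick is to combine this with the definition of $U_h$ in a self-bootstrapping inequality.

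First I would apply the small-set neighbor expansion to $U' = U_h$. Note $|U_h|\le |U|\le k|S|$, and by choosing $\eta$ sufficiently small relative to $k$ and the small-set threshold from \Cref{lem:base-graph}, we have $|U_h|$ well within the allowed range. Let
\[
T \;\coloneqq\; \braces*{v\in L : |N_{G_L}(v)\cap U_h|\ge 2\sqrt{k}}.
\]
Then $|T|\le \tau|U_h|$. Splitting the edges from $S$ to $U_h$ according to whether the left endpoint lies in $T$ or not, vertices in $S\cap T$ each contribute at most $k$ edges while vertices in $S\setminus T$ each contribute strictly fewer than $2\sqrt{k}$ edges, so
\[
e_{G_L}(S,U_h) \;\le\; k\,|S\cap T| \;+\; 2\sqrt{k}\,|S| \;\le\; k\tau|U_h| \;+\; 2\sqrt{k}\,|S|.
\]

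Next I would feed in the defining property of $U_h$: each $v\in U_h$ has $\deg_{G_L[S,U]}(v)\ge \tau/\delta$, so summing gives $e_{G_L}(S,U_h)\ge (\tau/\delta)|U_h|$, i.e.\ $|U_h|\le \delta\, e_{G_L}(S,U_h)/\tau$. Substituting this into the previous display cancels $\tau$ entirely and yields the self-referential bound
\[
(1-k\delta)\,e_{G_L}(S,U_h) \;\le\; 2\sqrt{k}\,|S|.
\]
Rearranging gives $e_{G_L}(S,U_h)\le \frac{2\sqrt{k}}{1-k\delta}\,|S|$. Under the parameter regime $\delta\le o_D(1)/k^2$ fixed at the start of the section, $k\delta$ is tiny, and a direct check shows $\frac{2\sqrt{k}}{1-k\delta}\le 2\sqrt{k}+k\sqrt{\delta}$ (equivalently $\frac{2\sqrt{k\delta}}{1-k\delta}\le 1$, which holds comfortably). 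Thus $e_{G_L}(S,U_h)\le (2k^{-1/2}+\sqrt{\delta})\cdot k|S|$, and subtracting from $e_{G_L}(S,U)=k|S|$ gives the claim.

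The only genuinely substantive step is the edge-partition bound in the second paragraph: once one sees to split $S$ according to whether it belongs to $T$, both pieces are controlled by properties we already have in hand, and the rest is a one-line algebraic closure using the definition of $U_h$. I don't anticipate a real obstacle—this is precisely the kind of calculation that motivated defining the small-set $2\sqrt{k}$-neighbor expander notion and the high/low-degree split in the first place; the only thing to be careful about is that the $\eta$-threshold for the small-set expansion of $G_L$ is respected when applied to $U_h$, which is handled by choosing $\eta$ small enough in terms of $k$.
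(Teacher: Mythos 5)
Your proof is correct and follows essentially the same route as the paper's: split $S$ according to whether a vertex has at least $2\sqrt{k}$ neighbors in $U_h$, bound the high part via the small-set $2\sqrt{k}$-neighbor expansion ($|T|\le \tau|U_h|$) combined with the degree threshold $e(S,U_h)\ge(\tau/\delta)|U_h|$, and bound the low part by $2\sqrt{k}|S|$. The only (immaterial) difference is the final algebraic step: the paper closes the loop with the trivial bound $e(S,U_h)\le k|S|$ and the condition $k\le 1/\sqrt{\delta}$, whereas you solve the self-referential inequality $(1-k\delta)e(S,U_h)\le 2\sqrt{k}|S|$, arriving at the same bound.
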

\begin{proof}
    By definition, the number of edges incident to $U_h$ in $\GLSU$ is at least $\frac{\tau}{\delta}|U_h|$.
    On the other hand, denoting $S_{\ge 2\sqrt{k}}$ to be the set of vertices in $S$ with at least $2\sqrt{k}$ neighbors in $U_h$,
    by small-set $2\sqrt{k}$-neighbor expansion of $G_L$, we have $|S_{\geq 2\sqrt{k}}| \leq \tau |U_h|$.
    Consequently, the number of edges from $S_{\ge2\sqrt{k}}$ into $U_h$ satisfies:
    \begin{align*}
        e\parens*{S_{\ge 2\sqrt{k}}, U_h} 
        \le k \abs*{S_{\geq2\sqrt{k}}}
        \le k \tau |U_h| 
        = k \delta \cdot \frac{\tau}{\delta}|U_h|
        \le k\delta \cdot e(S,U_h)
        % \le \sqrt{\delta}\cdot\frac{\tau}{\delta}|U_h| \le \sqrt{\delta} \cdot e(S, U_h) 
        \leq \sqrt{\delta} \cdot k|S| \mper
    \end{align*}
    Here, we use $k \leq 1/\sqrt{\delta}$.
    Thus, we have:
    \begin{align*}
        e\parens*{S, U_{\ell}} &= e\parens*{S, U} - e\parens*{S, U_h} \\
        &= k|S| - e\parens*{S_{\ge 2\sqrt{k}}, U_h} - e\parens*{ S_{< 2\sqrt{k}}, U_h } \\
        % &\ge k|S| - \sqrt{\delta}\cdot e(S, U_h) - 2\sqrt{k} |S| \\
        &\ge k|S| - \sqrt{\delta}\cdot k|S| - 2\sqrt{k}|S| \\
        &= \parens*{1 - \sqrt{\delta} - \frac{2}{\sqrt{k}}} \cdot k |S|\mper
        \qedhere
    \end{align*}
\end{proof}

\parhead{Middle-to-right analysis.}
We have proved that most edges from $S$ to $U$ touch low-degree vertices, which the reader should think of as gadgets through which the expansion into $R$ is lossless.
We make this formal below.
\begin{definition}
    For $S \subseteq L$ and $U = N_{G_L}(S) \subseteq M$, if a vertex $v \in R$ is a neighbor of $S$ in the final product due to connections from the gadget $H_u$ for $u\in U$, then we color the edge $(u, v)$ red.
    The red edges form a subgraph of $G_R$, which we denote as $\RED(S)$ or simply $\RED$ when $S$ is clear from context.
    \Cref{fig:special-sets} contains an example of the subgraph $\RED$.\footnote{We note that in \cite{HLMOZ25}, they need to define ``blue'' and ``red'' edges to prove \emph{unique-neighbor} expansion.
In our case, since we will show lossless expansion, we do not need to make this distinction.}
\end{definition}
By the choice of the threshold, we have $\frac{\tau}{\delta} \leq o_D(1) \cdot D_R / d_L$, and hence, by \Cref{lem:pr-gadget}, each vertex in $U_{\ell}$ expands by at least a $(1-o_D(1))d_L$ factor.
In particular, we have,
\begin{align*}
    e(\RED) \geq \sum_{u\in U_{\ell}} (1-o_D(1)) d_L \cdot \deg_{S}(u)
    = (1-o_D(1)) d_L \cdot e_{G_L}(S, U_{\ell}) \mper
    \numberthis \label{eq:red-edges}
\end{align*}

\begin{figure}[ht]
    \centering
    % First subfigure
    \begin{subfigure}{0.5\textwidth}
        \centering
        \includegraphics[width=\textwidth]{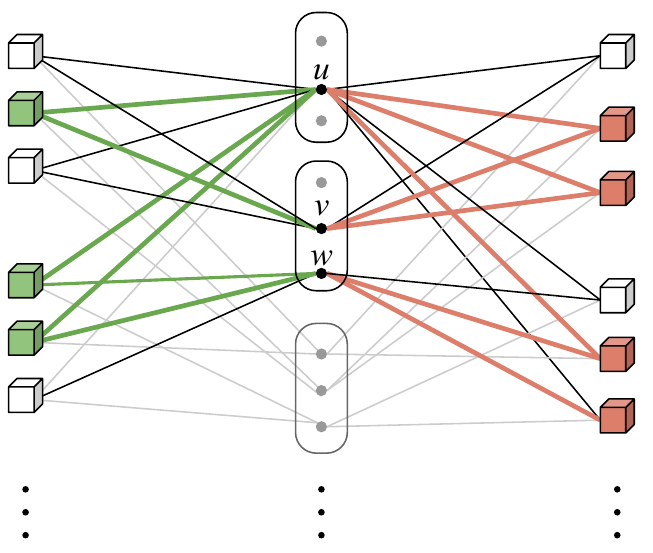}
        \caption{Let $S\subseteq L$ consist of the cubes colored green, and the cubes on the right incident to red edges are the neighbors of $S$ in the final product $Z$.}
        \label{fig:special-sets}
    \end{subfigure}
    \qquad
    % Second subfigure
    \begin{subfigure}{0.4\textwidth}
        \centering
        \includegraphics[width=\textwidth]{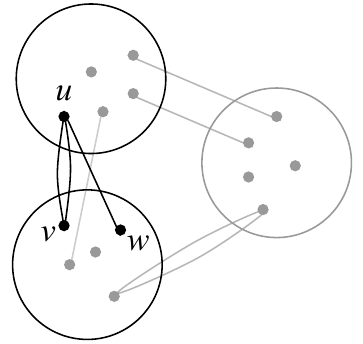}
        \caption{The collision multi-graph $C$ on $M$.
        Removing parallel edges gives the simple graph $\ul{C}$, which is a subgraph of $\wt{G}_R$.}
        \label{fig:collision}
    \end{subfigure}
    \caption{The two bipartite base graphs $G_L, G_R$ have the structure that $M$ has $k$ parts, and for $u\in M$ and $v, w \in M$ from a different part, the common neighborhoods $N_{G_R}(u) \cap N_{G_R}(v)$ and $N_{G_R}(u) \cap N_{G_R}(w) \subseteq R$ are disjoint, each corresponding to a \emph{special set} in $[D_R]$, i.e., $N_{G_R}(u) \cap N_{G_R}(v) = \Nbr_u(Q_i)$ for some special set $Q_i \subseteq [D_R]$.
    \\
    \hspace*{1em} \Cref{fig:special-sets} shows an example of $\RED(S)$, a subgraph of $G_R$.
    The middle-to-right analysis involves upper bounding the collisions of the red edges on the right.
    Here, $u$ has collisions with $v$ and $w$, represented as edges in the collision graph $C$ in \Cref{fig:collision}.
    We will show that this cannot happen too often by upper bounding $e(C)$.}
    \label{fig:collision-graph}
\end{figure}

% \begin{figure}[ht!]
%     \centering
%     \includegraphics[width=0.4\textwidth]{Figures/red.pdf}
%     \caption{Let $S\subseteq L$ consist of the cubes colored green, and the cubes on the right incident to red edges are the neighbors of $S$ in the final product.
%     The two bipartite base graphs $G_L, G_R$ have the structure that $M$ has $k$ parts, and for $u\in M$ and $v,w \in M$ from a different part, the common neighborhood $N_{G_R}(u) \cap N_{G_R}(v)$ and $N_{G_R}(u) \cap N_{G_R}(w) \subseteq R$ are disjoint, each corresponding to a \emph{special set} in $H_u$ (the copy of $H$ placed on $u$).
%     The middle-to-right analysis involves upper bounding the collisions of the red edges on the right.
%     In the figure, $u$ and $v$ have $2$ collisions. We will show that this cannot happen too often.}
%     \label{fig:red-edges}
% \end{figure}

In the remainder of the argument, we prove that the collisions between neighborhoods of different gadgets inflict negligible damage on expansion.

We next show that the red edges have few collisions in $R$.
We will crucially use the small-set skeleton expansion with $\lambda = O(D^{1/4})$ and the special set structure of $G_R$ with $s = \Theta(\sqrt{D})$ (\Cref{def:structured-bipartite-graph,lem:base-graph}).

We construct the \emph{collision graph} $C$ --- the multi-graph $C$ on vertex set $U \subseteq M$ by placing a copy of the edge $\{u,v\}$ for each $u \neq v \in U$, and $r\in R$ such that $\{u,r\}$ and $\{v,r\}$ are red edges in $\RED$.
See \Cref{fig:collision-graph} for an example.
The number of neighbors of $S$ in the final product $Z$ is at least
\begin{align*}
    e(\RED) - e(C) \mcom
\end{align*}
since a vertex $v \in R$ with degree $d_v$ in $\RED$ contributes one neighbor, but it is counted $d_v$ times in $e(\RED)$ and $\parens*{d_v \atop 2}$ times in $e(C)$, and $d_v - \parens*{d_v \atop 2} \leq 1$ for all $d_v \in \N$.

We will need the following folklore fact.
\begin{lemma}[{\cite[Lemma 2.17]{HLMOZ25}}]
\label{lem:out-degree-bound}
    Given a graph $G$ whose adjacency matrix has maximum eigenvalue $\lambda$, then there is an orientation of the edges in $G$ such that all vertices have out-degree at most $\lambda$.
\end{lemma}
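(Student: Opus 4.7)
The plan is to induct on $|V(G)|$ using two standard ingredients: (i) evaluating the Rayleigh quotient of $A$ on the all-ones vector gives
\[
\lambda \;\geq\; \lambda_{\max}(G) \;\geq\; \frac{\mathbf{1}^{\top} A \mathbf{1}}{\|\mathbf{1}\|^{2}} \;=\; \frac{2|E(G)|}{|V(G)|} \;=\; d_{\mathrm{avg}}(G),
\]
so the average degree of $G$ is at most $\lambda$ and in particular there exists some vertex $v \in V(G)$ with $\deg_G(v) \leq \lfloor \lambda \rfloor$; and (ii) Cauchy interlacing for real symmetric matrices guarantees that the adjacency matrix of the induced subgraph $G - v$, being a principal submatrix of $A$, has spectral radius at most $\lambda$ as well.

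With these in hand, the inductive step is clean. I pick a low-degree vertex $v$ as above and orient all of its incident edges outward, away from $v$. By (ii), the graph $G - v$ still has spectral radius at most $\lambda$, so by the inductive hypothesis it admits an orientation in which every vertex has out-degree at most $\lambda$. Combining the two orientations yields an orientation of $G$ in which $v$ has out-degree exactly $\deg_G(v) \leq \lambda$, while every other vertex $w \neq v$ has out-degree unchanged from the inductive orientation on $G - v$ — the edges incident to $v$ contribute only in-edges at $w$, never out-edges — hence still at most $\lambda$. The base case of a single-vertex graph is vacuous, completing the induction.

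I do not expect any real obstacles here: this is essentially the classical greedy argument (a spectral analogue of Hakimi's orientation theorem, which characterizes the minimum maximum out-degree by the maximum subgraph density), and the only verifications needed are the two standard spectral facts noted above. If a slicker write-up is preferred, one can repackage the induction as a single application of Hakimi's theorem, since by interlacing every induced subgraph $H \subseteq G$ satisfies $|E(H)|/|V(H)| \leq \lambda_{\max}(H)/2 \leq \lambda/2$, which in fact yields the slightly stronger bound of $\lceil \lambda/2 \rceil$ on the maximum out-degree; the statement of the lemma only requires the weaker bound $\lambda$.
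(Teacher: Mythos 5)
Your proof is correct, and it is the standard argument for this folklore fact: peel off a minimum-degree vertex (whose degree is at most the average degree, hence at most $\lambda$), orient its edges outward, and recurse on the induced subgraph, whose spectral radius is still at most $\lambda$ by interlacing. The paper itself does not prove this lemma but cites it from \cite{HLMOZ25}, whose proof is essentially this same greedy/degeneracy argument, so there is nothing to flag.
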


% \begin{lemma}[{\cite[Lemma 6.2]{HMMP24}}] \label{lem:average-degree-bound}
%     Let $G$ be a bipartite graph with average left-degree $d_1$ and average right-degree $d_2$.
%     Let $\lambda$ be the maximum eigenvalue of the adjacency matrix.
%     Then, $(d_1-1)(d_2-1) \leq \lambda^2$.
% \end{lemma}

\begin{claim} \label{claim:middle-to-right}
    Suppose $k\delta^2 \leq o_D(1)$, $\lambda \leq s \delta$, and $d_L \geq \frac{1}{\delta} \max\{\lambda, \sqrt{s}\} \log D$.
    Then, $e(C) \leq o_D(1) \cdot kd_L|S|$.
\end{claim}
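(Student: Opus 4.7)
The plan is to bound $e(C)$ by combining two ingredients: the simple graph $\ul{C}$ is a subgraph of $\wt{G}_R[U]$ whose spectral radius is at most $\lambda$ by small-set skeleton expansion of $G_R$, and for each edge $\{u,v\}$ of $\ul{C}$ the multiplicity $m(u,v)$ is controlled by the spread property of $H$ applied to a single special set of $u$'s gadget.

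First I would apply \Cref{lem:out-degree-bound} to $\wt{G}_R[U]$ to orient the edges of $\ul{C}$ with every out-degree at most $\lambda$, so that $e(C) = \sum_{u \to v} m(u,v)$. For $u \in M_a$ and an out-neighbor $v \in M_b$, property (3) of \Cref{def:structured-bipartite-graph} singles out a unique index $i(v) \in [r(a,b)]$ with $N_{G_R}(u) \cap N_{G_R}(v) = \RNbr_u(Q_{i(v)}^{a,b})$. Writing $A_u \coloneqq \LNbr_u^{-1}(S \cap N_{G_L}(u)) \subseteq [D_L]$, a red collision at $\{u,v\}$ forces the $\RNbr_u$-preimage of the shared right-vertex to lie in $N_H(A_u) \cap Q_{i(v)}^{a,b}$, so $m(u,v) \le |N_H(A_u) \cap Q_{i(v)}^{a,b}|$. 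Grouping out-neighbors of $u$ by the part they lie in,
\[
\sum_{v \,:\, u \to v} m(u,v) \;\le\; \sum_{b \ne a} \sum_{i \in W_b(u)} |N_H(A_u) \cap Q_i^{a,b}|,
\]
where $W_b(u) \subseteq [r(a,b)]$ records the special-set indices realized by out-neighbors of $u$ in $M_b$, and $\sum_b |W_b(u)| \le \lambda$.

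The main step is to apply the spread clause of \Cref{lem:pr-gadget} to each inner sum. Since spread requires $|W| \ge s\log D / d_L$, whenever $|W_b(u)|$ falls below this threshold I would pad it with arbitrary extra indices in $[r(a,b)]$ up to size $\lceil s\log D / d_L \rceil$; enlarging $W$ only increases the sum. This yields the uniform bound
\[
\sum_{i \in W_b(u)} |N_H(A_u) \cap Q_i^{a,b}| \;\le\; O(1) \cdot \left(|W_b(u)| + \tfrac{s\log D}{d_L}\right)\left(\tfrac{d_L |A_u|}{s} + \log D\right).
\]
Expanding this product, summing over $b \ne a$ using $\sum_b |W_b(u)| \le \lambda$, and then over $u \in U$ using $\sum_u |A_u| = e_{G_L}(S,U) = k|S|$ and $|U| \le k|S|$, bounds $e(C)$ by a constant times
\[
\frac{\lambda d_L k}{s}\,|S| \;+\; \lambda k \log D \cdot |S| \;+\; k^2 \log D \cdot |S| \;+\; \frac{k^2 s \log^2 D}{d_L}\,|S|.
\]

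Finally I would verify that each of these four terms is at most $o_D(1) \cdot k d_L |S|$. Dividing through by $k d_L |S|$ reduces the checks to $\lambda/s$, $\lambda \log D / d_L$, $k \log D / d_L$, and $k s \log^2 D / d_L^2$; the hypotheses $\lambda \le s\delta$, $d_L \ge \max\{\lambda,\sqrt{s}\}\log D / \delta$, and $k\delta^2 \le o_D(1)$ (together with $k$ being a constant, so $k\delta = o_D(1)$) immediately yield that each of the four is $o_D(1)$. The main obstacle is the case split around the spread threshold for $|W_b(u)|$, which the padding trick handles cleanly; the remaining bookkeeping across the four error terms is routine.
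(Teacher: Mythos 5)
Your proposal is correct and follows essentially the same route as the paper: orient $\ul{C}$ via the eigenvalue bound and \Cref{lem:out-degree-bound}, identify edge multiplicities with gadget-neighborhood intersections of special sets via \Cref{def:structured-bipartite-graph}, invoke the spread clause of \Cref{lem:pr-gadget} (your padding of $W$ up to the threshold is the same device as the paper's $|W| = \max\{|\Out(v)|, s\log D/d_L\}$), and finish with the same four-term parameter check. The only cosmetic difference is that you aggregate over all parts $b \neq a$ per vertex rather than restricting to one pair $(a,b)$ and multiplying by $k^2$ at the end.
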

\begin{proof}
    Let $\ul{C}$ be the simple graph obtained by removing duplicate edges from $C$.
    Moreover, let $\wt{G}_R$ be the simple graph on $M$ where $u \neq v\in M$ are connected if they have a common neighbor in $R$ in the graph $G_R$.
    Clearly, $\ul{C}$ is a subgraph of $\wt{G}_R$.
    Moreover, recall from \Cref{def:structured-bipartite-graph} that $M$ is a union of $k$ vertex sets,
    and thus $\wt{G}_R$ is $k$-partite. Let us now restrict $C$ to edges between two parts $a,b\in [k]$.
    We will write $r = r(a,b)$ and the special sets $Q_i = Q_i^{a,b}$ for simplicity.
    
    By the $\lambda$-small set skeleton expansion, we have that $\ul{C}$ has largest eigenvalue at most $\lambda$.
    This intuitively means that $\ul{C}$ contains very few edges.
    Next, we need to upper bound the multiplicities of edges in $C$.
    The main observation is that if $u \in M_a$ and $v \in M_b$ are connected in $\wt{G}_R$, then $u, v$ in fact have many common neighbors in $G_R$.
    More specifically, $u$ has neighbors $v_1, v_2,\dots,v_r$ in $\wt{G}_R$, and each common neighborhood $N_{G_R}(u) \cap N_{G_R}(v_i) \subseteq R$ corresponds to a special set as in \Cref{def:structured-bipartite-graph}.
    On the other hand, the pseudorandomness of the gadget $H$ implies that the red edges coming out of $u$ must be evenly spread among the special sets.
    In the following, we make this intuition formal.

    The largest eigenvalue of $\ul{C}$ is at most $\lambda$.
    Thus, by \Cref{lem:out-degree-bound}, there is an orientation of the edges of $\ul{C}$ such that all vertices have out-degree at most $\lambda$.
    Pick such an orientation, and let $\Out(u)$ be the set of out-going edges incident to $u$.
    Then,
    \begin{align*}
        e(C) = \sum_{u\in U} \sum_{e\in \Out(u)} \mathrm{multiplicity}(e) \mper
    \end{align*}
    Due to the special set structure of $G_R$ (\Cref{def:structured-bipartite-graph}), for any $u \in M_a$ and $v_1,\dots,v_r$ (potentially) connected in $\ul{C}$, their common neighborhoods within $G_R$ are exactly special sets in the gadget $H_u$ --- that is, $N_{G_R}(u) \cap N_{G_R}(v_i) = \RNbr_u(Q_i)$, and each $|Q_i| \in \left[\frac{D_R}{2s}, \frac{2D_R}{s}\right]$ where $s = \Theta(\sqrt{D})$ from \Cref{lem:base-graph}.

    Thus, we can upper bound $\sum_{e\in \Out(v)} \mathrm{multiplicity}(e)$ by the number of red edges that land in any $|\Out(v)|$ of the special sets.
    Denote $\deg_S(v) \coloneqq \deg_{\GLSU}(v)$.
    By \Cref{lem:pr-gadget}, applying the bound with $|W| = \max\braces*{|\Out(v)|, \frac{s\log D}{d_L}} \leq \max\braces*{\lambda, \frac{s\log D}{d_L}}$ and $|A| = \deg_S(v)$, we get
    \begin{align*}
        \sum_{e\in \Out(v)} \mathrm{multiplicity}(e) 
        &\le O(1) \cdot \max \braces*{\lambda,\ \frac{s \log D}{d_L}} \cdot \max\braces*{ \frac{d_L}{s}\cdot \deg_S(v),\ \log D } \\
        &\le O(1) \cdot \max\braces*{ \frac{\lambda}{s},\ \frac{\lambda \log D}{d_L \deg_S(v)},\ \frac{\log D}{d_L},\ \frac{s \log^2 D}{d_L^2 \deg_S(v)} }
        \cdot d_L \cdot \deg_S(v) \\
        &\leq O(\delta) \cdot d_L \cdot \deg_S(v) \mper
    \end{align*}
    Here, we use the assumptions on the parameters: $\lambda \leq \delta s$, and $d_L \geq \frac{1}{\delta}\max\{\lambda, \sqrt{s}\} \log D \geq \frac{1}{\delta}\log D$.
    
    Summing over $v\in U$, we get
    \begin{align*}
        e(C) &\leq O(\delta) \cdot d_L \sum_{v\in U} \deg_S(v) 
        \leq O(\delta) \cdot kd_L |S| \mper
    \end{align*}
    The above is restricted to one pair $a,b\in [k]$.
    For the final bound, we multiply the above by $k^2$.
    Since $k^2\delta \leq o_D(1)$, we get $e(C) \leq o_D(1) \cdot kd_L|S|$.
\end{proof}

Finally, we combine the above to finish the proof of \Cref{thm:main}.
With $\delta \leq o_D(1) \cdot \frac{1}{k^2}$ and $k \geq 16/\eps^2$,
\Cref{claim:left-to-middle} and \Cref{eq:red-edges} imply that
\begin{align*}
    e(\RED) \geq (1 - o_D(1)) \cdot d_L \cdot \parens*{1-\sqrt{\delta}- 2k^{-1/2} } k|S|
    \geq (1 - \eps/2) kd_L|S|\mper
\end{align*}
The number of neighbors of $S$ in the final product $Z$ is at least $e(\RED) - e(C)$, and by \Cref{claim:middle-to-right} we have $e(C) \leq o_D(1) \cdot kd_L|S|$.
Thus, choosing $D$ large enough,
\begin{align*}
    |N_Z(S)| \geq (1 - \eps) kd_L|S| \mper
\end{align*}
The analysis for the expansion of any $T \subseteq R$ is identical.
This finishes the proof.
\qed
\section{Cubical complexes and coded incidence graphs} \label{sec:cubical-complexes}

\parhead{Notation and terminology.}
Given subsets $A,B$ of a group $\Gamma$ with multiplication operation $\cdot$, we define $A\cdot B$ to refer to the product set $\{a\cdot b : a\in A, b\in B\}$.

We start with the definition of cubical generating sets.

\begin{definition}[Cubical generating set]
    Let $\Gamma$ be a finite group and $k\in\bbN$.
    We say $A_1,A_2,\dots,A_k\subseteq \Gamma$ are \emph{cubical generating sets} if they are closed under inverses, and
    \begin{itemize}
        \item $A_i\cdot A_j = A_j \cdot A_i$ for all $i \ne j$,
        \item $\abs*{A_1\cdots A_k} = \abs*{A_1} \cdots \abs*{A_k}$.
    \end{itemize}
\end{definition}

\begin{definition}[Decorated Cayley cubical complex]
\label{def:cubical-complex}
    Given a finite group $\Gamma$ and cubical generating sets $\calA = (A_1,\dots,A_k)$, the \emph{(decorated) Cayley cubical complex} $X = \Cay(\Gamma; \calA)$ is defined by:
    \begin{itemize}
        \item its vertex set $X(0) = \Gamma \times \zo^k$,
        \item its $k$-face set $X(k)$ consisting of all $2^k$-sized subsets of $X(0)$ of the form $f = \{(f_x, x)\}_{x\in\zo^k}$ such that for every edge $\{x, x \oplus e_i\}$ of the hypercube, $f_x^{-1} f_{x\oplus e_i} \in A_i$.
        \item For $I \subseteq [k]$, we define an $I$-subcube to be all $\zo^k$ strings of the form $y \oplus \bigoplus_{i \in I} b_i e_i$, where $b_i \in \{ 0, 1 \}$ and $e_i$ denotes the vector with a $1$ in the $i$'th index. The dimension of an $I$-subcube is $|I|$.  
        \item For a subcube $C$ of $\zo^k$, we define the set of $C$-faces $X(C)$ as:
        \[
            X(C) \coloneqq \braces*{ \braces*{(f_x, x)}_{x\in C} : f\in X(k)}\mper
        \]
        We define the set of $i$-faces as $X(i) \coloneqq \bigcup_{C:\dim(C)=i} X(C)$.
    \end{itemize}
\end{definition}

We use the word ``decorated'' since the vertex set $X(0)$ consists of $2^k$ copies of $\Gamma$, as opposed to the usual way of Cayley graphs on $\Gamma$.

Henceforth, we fix a group $\Gamma$ along with cubical generating sets $A_1,\dots,A_k$, and let $X = \Cay(\Gamma; (A_1,\dots,A_k))$.

One important property of cubical complexes is that for any two points $(g, \vec{0})$ and $(g', \vec{1})$ in opposite corners, there is at most one $k$-face $f\in X(k)$ that contains the two points.
More generally, given $U = \{(g_1, \vec{0}), (g_2,x_2),\dots,(g_m, x_m)\}$, any face restricted to the subcube of the coordinates $\bigcup_{t>1} \supp(x_t)$ is uniquely identified (if exists).
An example is given in \Cref{fig:cubical-complex}. The points $(g, 000)$ and $(g a_1 a_2 a_3, 111)$ uniquely identify a $3$-face.
Moreover, the points $(g, 000)$, $(ga_1a_2, 110)$ and $(ga_1 a_3', 101)$ also uniquely identify a $3$-face, since $\supp(110) \cup \supp(101) = [3]$.

This property is crucial in our construction, and a more general form is formalized in the following lemma.

% We now establish some basic facts about Cayley cubical complexes.
\begin{lemma} \label{lem:cube-count}
    For any $U\subseteq X(0)$ where $U = \{(g_1,x_1),\dots,(g_m,x_m)\}$, define
    \[
        S(U) = \braces*{i\in[k]:\exists\, s,t\in[m]\text{ s.t. }x_s[i] \ne x_t[i] }
        = \bigcup_{t > 1} \supp(x_t \oplus x_1) \mcom
    \]
    and subcube
    \[
        C(U) = x_1 \oplus \bigoplus_{i \in S(U)} \{0,1\} \cdot e_i \mper
    \]
    There is at most one $C(U)$-face containing $U$, and if such an $C(U)$-face exists, the number of $k$-faces containing $U$ is equal to $\prod_{i\notin S(U)} |A_i|$.
\end{lemma}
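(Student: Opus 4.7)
The strategy is to use a parameterization of cubes based at the vertex $x_1$: every $k$-face is equivalently specified by a tuple $(f_{x_1}, b_1, \dots, b_k) \in \Gamma \times A_1 \times \cdots \times A_k$, where $b_i \coloneqq f_{x_1}^{-1} f_{x_1 \oplus e_i}$ is the edge label out of $x_1$ in direction $i$; analogously, a $C(U)$-subface is parameterized by $(f_{x_1}, (b_i)_{i \in S(U)})$. Both claims of the lemma then become statements about which of these parameters are pinned down by the data $U = \{(g_t, x_t)\}_{t \in [m]}$.

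I would first note the containment $U \subseteq X(C(U))$: for every $t$, since $\supp(x_t \oplus x_1) \subseteq S(U)$, we have $x_t \in C(U)$. For the uniqueness claim, let $\{h_y\}_{y \in C(U)}$ be any $C(U)$-face containing $U$; then $h_{x_1} = g_1$ is fixed, and the task reduces to showing each $b_i$ with $i \in S(U)$ is determined by $U$. Fix $i \in S(U)$ and, using the definition of $S(U)$, pick $t > 1$ with $i \in I_t \coloneqq \supp(x_t \oplus x_1)$. Enumerate $I_t = (i = j_1, j_2, \dots, j_\ell)$ and walk within $C(U)$ from $x_1$ to $x_t$ by flipping coordinates in this order; the walk expresses $g_1^{-1} g_t = b_i \cdot c_2 \cdots c_\ell$ with $c_r \in A_{j_r}$. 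The cubical generating set axioms imply $|A_{j_1} \cdots A_{j_\ell}| = \prod_r |A_{j_r}|$, so this product admits unique factorization in the chosen order. Hence the first factor $b_i$ is determined by $g_1^{-1} g_t$ alone, which pins down every $b_i$ for $i \in S(U)$ and therefore the entire $C(U)$-face.

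For the extension count, the $x_1$-based parameterization shows that any $k$-face extending the unique $C(U)$-face must match $f_{x_1} = g_1$ and $(b_i)_{i \in S(U)}$, while the remaining labels $(b_i)_{i \notin S(U)}$ can be chosen freely in the respective $A_i$, yielding $\prod_{i \notin S(U)} |A_i|$ extensions. Different choices yield different $k$-faces (they differ at the neighbor of $x_1$ in the corresponding direction), and each such extension restricts to the same $C(U)$-face since the restriction depends only on the labels in $S(U)$.

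The main obstacle I anticipate is justifying the two supporting facts that prop up the proof: that any vertex (not just the one used in the paper's $(g, a_1, \dots, a_k)$ parameterization) can serve as the base, and that $|A_{j_1} \cdots A_{j_\ell}| = \prod_r |A_{j_r}|$ holds for any subset of indices in any order. The former follows from the symmetry of the cubical axioms under flipping coordinates. The latter is a short padding argument: given two factorizations $\prod_r a_r = \prod_r a_r'$ in the order $j_1, \dots, j_\ell$, multiplying both sides on the right by any fixed $\prod_{i \notin I_t} c_i$ with $c_i \in A_i$ yields two factorizations of the same element in the full product $A_1 \cdots A_k$ (in the order $j_1, \dots, j_\ell$ followed by any ordering of the complement), which must coincide by the cubical axiom. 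Once these facts are in hand, the walk argument and the extension count are essentially immediate.
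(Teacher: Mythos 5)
Your reduction of the uniqueness claim to determining the edge labels $b_i$ at $x_1$, and the walk/unique-factorization argument pinning down each $b_i$ for $i \in S(U)$, are correct and match the base case ($r=1$) of the paper's induction; your padding argument for $|A_{j_1}\cdots A_{j_\ell}| = \prod_r |A_{j_r}|$ is also fine (the paper uses this fact implicitly). The gap is that everything then rests on the ``parameterization'' claim --- that a $C(U)$-face is uniquely determined by $(f_{x_1}, (b_i)_{i\in S(U)})$, and that every choice of the remaining labels $(b_i)_{i\notin S(U)}$ extends to a valid $k$-face --- and this claim is essentially the lemma itself (it is the lemma applied to $U' = \{(g_1,x_1)\}\cup\{(g_1 b_i,\, x_1\oplus e_i)\}_{i\in S(U)}$). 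The two supporting facts you flag (change of base point, sub-product sizes) do not touch it.

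Concretely, the injectivity direction is not a one-liner: knowing $f_{x_1}$ and all $f_{x_1\oplus e_i}$ does not immediately determine $f_y$ for $y$ at distance $2$, because the two constraints $f_y \in f_{x_1\oplus e_i}A_j$ and $f_y\in f_{x_1\oplus e_j}A_i$ cut out a set of the form $cA_j\cap c'A_i$, which the cubical axioms only force to have size $1$ \emph{on average} over the cosets, not pointwise. The paper handles this by inducting on the radius of a Hamming ball around $x_1$ inside $C(U)$, where each new vertex is pinned down by re-running your coset-disjointness argument against a point of $U$ on the \emph{far} side, not just against the already-determined near neighbors; this induction is the bulk of the uniqueness proof and is absent from your writeup. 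Likewise, ``the remaining labels can be chosen freely'' is an existence statement requiring an explicit construction of a consistent assignment $f_y$ for all $y\in\zo^k$ via the set-commutation relations, which occupies the second half of the paper's proof; without it you only obtain the upper bound $\le \prod_{i\notin S(U)}|A_i|$ on the number of extensions, not equality.
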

\begin{proof}
    We will first prove that there is at most one $C(U)$-face containing $U$, and then prove that if nonzero, the number of $k$-faces containing $U$ is equal to $\prod_{i\notin S(U)} \abs*{A_i}$.

    \parhead{Proof that there is at most one $C(U)$-face containing $U$.}
    Define $B_{r}^S(x)$ as the set of all vectors $y$ in $\zo^k$ such that the Hamming weight of $x\oplus y$ is at most $r$ and $\supp(x\oplus y) \subseteq S$.
    We will prove for every $r\ge 1$ and each $y\in B_{r}^{S(U)}(x_1)$, there exists an element $g_y \in \Gamma$ such that $f_y = g_y$ for every face $f$ containing $U$.
    Indeed, this claim implies that there can be at most one $C(U)$-face containing $U$.

    We start by proving the claim for $r = 1$.
    Let $y = x_1 \oplus e_i \in B_1^{S(U)}(x_1)$ where $i\in S(U)$.
    Note that $i\in S(U)$ means that 
    there is a $t\in[m]$ such that $x_1[i] \ne x_t[i]$.
    We will prove that the points $(g_1, x_1)$ and $(g_t, x_t)$ uniquely identify $(f_y, y)$.
    Equivalently, any pair of faces $f$ and $f'$ containing $U$ must have $f_y = f'_y$.
    
    Define $a_i = g_1^{-1} f_y$ and $a_i' = g_1^{-1} f_y'$.
    Note that both $a_i$ and $a_i'$ must be in $A_i$.
    Pick an arbitrary order $j_1,\dots,j_{\ell}$ for the coordinates in $\supp(x_1\oplus x_t) \setminus \{i\}$.
    Next, observe that the sets $E \coloneqq a_i\cdot A_{j_1} \cdots A_{j_{\ell}}$ and $E' \coloneqq a_i' \cdot A_{j_1} \cdots A_{j_{\ell}}$, which both have size $\abs*{A_{j_1}}\cdots\abs*{A_{j_{\ell}}}$, must have a nonempty intersection since they both must contain $g_1^{-1} g_t$.
    Now, $\abs*{A_i \cdot A_{j_1} \cdots A_{j_{\ell}} } = \abs*{A_i}\cdot \abs*{A_{j_1}} \cdots \abs*{A_{j_{\ell}}}$, and thus if $a_i \ne a_i'$, then $E$ and $E'$ must be disjoint.
    Therefore, $a_i = a_i'$ and $f_y = f_y'$.

    For the inductive step, assume that for some $r\ge 2$, the uniqueness statement holds for all $y\in B_{r-1}^{S(U)}(x_1)$.
    Let $f$ be any face containing $U$ and let $y \in B_{r}^{S(U)}(x_1)$.
    We will prove that $f_y$ is uniquely determined.
    Define $U' \coloneqq U \cup \braces*{ (g_x, x) : x \in B_{r-1}^{S(U)}(x_1) }$ where $g_x$ is the unique value of $f_x$ for any face $f$ containing $U$.
    Note that $S(U') = S(U)$.
    Observe that $\supp(y\oplus x_1)$ is nonempty by the assumption that $r \ge 2$, and let $i$ be an arbitrary element contained within.
    This means that $y \oplus e_i \in B_{r-1}^{S(U)}(x_1)$.
    Since $S(U') = S(U)$, the conclusion that $f_y$ is uniquely determined follows by applying the statement we established for $r = 1$ to $U'$ in place of $U$ and $y\oplus e_i$ in place of $x_1$.

    \parhead{On number of ways to extend a $C(U)$-face to a $k$-face.}
    It remains to prove that the number of ways to extend a $C(U)$-face to a full $k$-face is equal to $\prod_{i\notin S(U)} \abs*{A_i}$.
    To this end, fix an order $i_1,\dots,i_{\ell}$ of coordinates in $\ol{S(U)}$ arbitrarily.
    For each choice of $(a_i \in A_i)_{i\notin S(U)}$, we will prove that there is a unique $k$-face $f$ containing $U\cup\braces*{ \parens*{g_1\cdot a_{i_1}\cdots a_{i_{\ell}}, x_1 \oplus 1_{\ol{S(U)}} } }$.
    The conclusion will follow from the fact that there are $\prod_{i\notin S(U)} \abs*{A_i}$ many choices for $(a_i)_{i\notin S(U)}$.

    We will construct this face $f$ by describing $f_y$ for each $y\in\zo^k$.
    We will first treat the case of $y$ of the form $x_1\oplus\Delta$ for $\Delta$ supported on coordinates outside $S(U)$.
    Let $j_1,\dots,j_s$ be the coordinates in the support of $\Delta$, and let $j'_1,\dots,j'_{\ell-s}$ be an arbitrary order for coordinates in $\{i_1,\dots,i_{\ell}\}\setminus\{j_1,\dots,j_s\}$.
    Now, by the property that $A_i\cdot A_j = A_j \cdot A_i$ for every $i,j$, we have:
    \[
        g_1\cdot a_{i_1}\cdots a_{i_{\ell}} = g_1 \cdot a'_{j_1}\cdots a'_{j_s} \cdot a'_{j'_1} \cdots a'_{j'_{\ell-s}}
    \]
    where $a'_j \in A_j$.
    We define $f_y$ as $g_1\cdot a'_{j_1}\cdots a'_{j_s}$.

    We now construct $f_y$ for general $y\in\zo^k$.
    Observe that $y$ can be written as $z\oplus\Delta$ for $z\in C(U)$ and $\Delta$ supported only on coordinates outside $S(U)$.
    Let $j_1,\dots,j_s$ be the coordinates in the support of $\Delta$, and let $j'_1,\dots,j'_{s'}$ be the coordinates in the support of $x_1\oplus z$.
    Now, we can write:
    \begin{align*}
        f_{x_1\oplus\Delta} &= g_1 \cdot a'_{j_1}\cdots a'_{j_s} \\
        &= g_z \cdot a'_{j'_1} \cdots a'_{j'_{s'}} \cdot a'_{j_1}\cdots a'_{j_s} \\
        &= g_z \cdot a''_{j_1} \cdots a''_{j_s} \cdot a''_{j'_1} \cdots a''_{j'_{s'}}\mcom
    \end{align*}
    where $a''_j\in A_j$.
    In the above, we used the construction of $f_{x_1\oplus\Delta}$ from earlier in the first equality, the fact that there is a $C(U)$-face containing $(g_z,z)$ and $(g_1,x_1)$ in the second equality, and $A_i\cdot A_j = A_j \cdot A_i$ in the third equality.
    Finally, we set $f_y$ as $g_z\cdot a''_{j_1} \cdots a''_{j_s}$.
    It can easily be checked using the set-commuting relation that $f$ is indeed a valid $k$-face.
    Finally, $f$ is the unique face containing $\wt{U} \coloneqq U\cup\braces*{ \parens*{g_1\cdot a_{i_1}\cdots a_{i_{\ell}}, x_1 \oplus 1_{\ol{S(U)}} } }$ since $S\parens{\wt{U}} = [k]$, which completes the proof.
\end{proof}

Finally, we define a natural notion of expansion in a cubical complex that is useful for our purposes.
\begin{definition}[Expanding cubical complex]
\label{def:expanding-cubical-complex}
    We say that a cubical complex $X = \Cay(\Gamma; (A_1,\dots,A_k))$ is $\alpha$-expanding if for any $x,y\in \zo^k$, the bipartite graph $\calI_{y,y\oplus x}$ with edge set $\braces*{ \braces*{(g, y), (g \cdot \prod_{i=1}^k a_i^{x_i}, y\oplus x)} : g\in \Gamma, a_i\in A_i }$, which has degree $d_x(X) = \prod_{i=1}^k |A_i|^{x_i}$, has second eigenvalue at most $\alpha \sqrt{d_x(X)}$.
    For $i\in[k]$, we define
    \(
        d_i(X) \coloneqq \max_{x\in\zo^k:\,|\supp(x)| = i} d_x(X)\mper
    \)
\end{definition}

The following theorem is essentially contained in \cite{RSV19} in a different form.
We provide a mostly self-contained proof in \Cref{sec:cubical-construction}, assuming only that the expander graphs of Lubotzky--Phillips--Sarnak \cite{LPS88} are Ramanujan.
\begin{theorem}    \label{thm:cubical-complexes}
    Let $p_1 < \dots < p_k$ and $q > 2\sqrt{\prod_{i=1}^k p_i}$ be any prime numbers congruent to $1$ mod $4$, and each $p_i$ is a quadratic residue modulo $q$.
    There is an explicit choice of cubical generating sets $A_1,\dots,A_k$ on $\Gamma = \PSL_2(\bbF_q)$ such that $|A_i| = p_i+1$ and the cubical complex $X=\Cay(\Gamma; (A_1,\dots,A_k))$ is $2^k$-expanding.
\end{theorem}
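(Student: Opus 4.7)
The plan is to construct each $A_i$ via the Lubotzky--Phillips--Sarnak quaternionic recipe, verify the cubical generating set axioms using unique factorization in the integer quaternion ring, and then deduce $2^k$-expansion by observing that the cubical relations force the Cayley adjacency operators to commute pairwise; the eigenvalue bound then reduces to a direct consequence of LPS-Ramanujan applied as a black box.

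First I would define $A_i$ as the image in $\mathrm{PSL}_2(\bbF_q)$ of the $p_i+1$ primitive integer quaternions of norm $p_i$, appropriately normalized and paired with their conjugates so that $A_i$ is closed under inverses. The splitting $\mathbb{H}(\bbF_q)\cong M_2(\bbF_q)$, valid since $q\equiv 1\pmod 4$, embeds the quaternions into $2\times 2$ matrices, and the quadratic-residue hypothesis on each $p_i$ ensures the resulting images land in $\mathrm{PSL}_2$ rather than $\mathrm{PGL}_2\setminus\mathrm{PSL}_2$. Injectivity of this reduction (so that $|A_i|=p_i+1$) follows from a standard norm-bound argument: two distinct normalized quaternions with equal image produce a nonzero quaternion of norm $O(p_i)$ divisible by $q$, impossible when $q>2\sqrt{\prod_{j}p_j}\geq 2\sqrt{p_i}$.

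Next I would verify the cubical axioms. For $A_i A_j = A_j A_i$, given $\alpha \in A_i$ and $\beta\in A_j$, the product $\alpha\beta$ has norm $p_i p_j$; by unique factorization of integer quaternions into prime-norm factors (up to units), it rewrites as $\beta'\alpha'$ with $N(\beta')=p_j$ and $N(\alpha')=p_i$, and re-normalization yields $\beta'\in A_j$ and $\alpha'\in A_i$. For $|A_1\cdots A_k|=\prod_i(p_i+1)$, the claim is that distinct tuples $(a_1,\dots,a_k)\in A_1\times\cdots\times A_k$ yield distinct products in $\Gamma$: unique factorization at the quaternion level already gives distinct lifts, and $q>2\sqrt{\prod_i p_i}$ ensures this survives reduction modulo $q$, since two quaternion products of norm $\prod_i p_i$ reducing to the same element of $\mathrm{PSL}_2(\bbF_q)$ would differ by a nonzero quaternion of small norm divisible by $q$, a contradiction.

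Finally, for the $2^k$-expansion bound, the key observation is that the cubical axioms force the Cayley adjacency operators $A^{(i)}$ on $\mathbb{C}^\Gamma$ to pairwise commute: both $(A^{(i)}A^{(j)})_{g,g'}$ and $(A^{(j)}A^{(i)})_{g,g'}$ equal the indicator of $g^{-1}g'\in A_iA_j=A_jA_i$, with any multiplicity ruled out by $|A_iA_j|=|A_i||A_j|$. Hence the self-adjoint operators $A^{(i)}$ share a common orthogonal eigenbasis. The bipartite adjacency of $\mathcal{I}_{y,y\oplus x}$ is precisely $\prod_{i:\,x_i=1}A^{(i)}$ (in any order), so its top singular value is $d_x(X)=\prod_{i:\,x_i=1}(p_i+1)$, attained on the constant vector, while its operator norm on the orthogonal complement of the constant vector is at most $\prod_{i:\,x_i=1}2\sqrt{p_i}\le 2^k\sqrt{d_x(X)}$, where LPS-Ramanujan supplies the bound $2\sqrt{p_i}$ on each factor (using non-bipartiteness of $\mathrm{Cay}(\Gamma;A_i)$ from the quadratic-residue condition to ensure $|A_i|$ is the unique top eigenvalue). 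The main obstacle is the product-size condition $|A_1\cdots A_k|=\prod_i(p_i+1)$: controlling the unit ambiguity in quaternion factorizations and propagating injectivity through mod-$q$ reduction requires careful bookkeeping, whereas the expansion step itself is essentially immediate once operator commutation is in hand.
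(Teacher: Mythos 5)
Your proposal is correct and follows essentially the same route as the paper: LPS quaternionic generators of norm $p_i$ normalized by odd trace, unique factorization of integral quaternions to verify the cubical axioms, and the Ramanujan bound applied as a black box with (sub)multiplicativity of the spectral norm on the complement of the constant vector to get $2^k\sqrt{d_x}$. The only cosmetic differences are that the paper counts $|A(p_1\cdots p_k)|$ via Jacobi's four-square theorem rather than arguing injectivity of the multiplication map directly, and it does not need the commutativity of the adjacency operators for the spectral step (submultiplicativity alone suffices); your explicit treatment of injectivity of the mod-$q$ reduction, where $q>2\sqrt{\prod_i p_i}$ is used, is a point the paper leaves implicit.
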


\parhead{Base graph construction.}
We will construct our bipartite base graph based on a cubical complex $X$ and a code $\calC \subseteq \zo^k$.
To do so, we first introduce the notion of the ``signature'' of a cube.

\begin{definition}[Signature of cube]
    Given a $k$-face $f\in X(k)$, its \emph{signature} is the following labeling of the directed edges of the $k$-dimensional hypercube with elements of $\Gamma$:
    for every $x\in\zo^k$ and every $i\in[k]$, we label the directed edge $(x,x\oplus e_i)$ with $f_x^{-1} f_{x\oplus e_i}$.
\end{definition}

\begin{definition}[Coded cubical incidence graph]
\label{def:cubical-incidence-graph}
    Given a code $\calC \subseteq \zo^k$, the $\calC$-\emph{cubical incidence graph} of a cubical complex $X$ is the edge-labeled bipartite graph $(V_1, V_2, E)$ such that $V_1 = X(k)$, $V_2 = \Gamma \times \calC \subseteq X(0)$, and $f\in X(k)$ and $(g,x)\in V_2$ are connected iff $(g,x)\in f$.
    Further, an edge between $f$ and $(g,x)$ is labeled with the signature of $f$. 
\end{definition}

Our construction uses the cubical incidence graph arising from the Hadamard code, of which we use minimal properties.
\begin{fact}    \label{fact:hadamard}
    Let $k$ be a power of $2$.
    The $k$-th Hadamard code $\calH_k$ is a linear code in $\F_2^k$ of dimension $\log_2 k$ where for all distinct $x,y\in\calH_k$, the Hamming distance between $x$ and $y$ is exactly $k/2$.
\end{fact}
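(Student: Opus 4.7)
The plan is to realize $\calH_k$ concretely as an evaluation code and read off both the dimension and the distance from a single surjectivity statement. Set $r = \log_2 k$ and identify the coordinate set $[k]$ with $\F_2^r$ (say via the binary expansion of indices). I would then define the encoding map $\phi: \F_2^r \to \F_2^k$ by
\[
\phi(a) \;=\; \bigl(\langle a, x\rangle\bigr)_{x \in \F_2^r},
\]
where $\langle \cdot,\cdot\rangle$ is the standard $\F_2$-bilinear inner product, and take $\calH_k \coloneqq \phi(\F_2^r)$. Linearity of $\calH_k$ is immediate since $\phi$ is $\F_2$-linear.

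For the dimension, I would show $\phi$ is injective: if $\phi(a) = 0$ then $\langle a, x\rangle = 0$ for every $x \in \F_2^r$, and evaluating at $x = e_1, \dots, e_r$ forces $a = 0$. Hence $\dim \calH_k = r = \log_2 k$.

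For the distance claim, I would first reduce to a weight statement via linearity: for distinct $x, y \in \calH_k$, $\mathrm{dist}(x,y) = \mathrm{wt}(x+y)$, and $x+y$ is a nonzero codeword, so it suffices to prove that every nonzero $\phi(a)$ has Hamming weight exactly $k/2$. Given $a \neq 0$, the linear functional $\langle a, \cdot\rangle : \F_2^r \to \F_2$ is surjective (since $a_i = 1$ for some $i$, plug in $e_i$), so its kernel has size $2^{r-1}$ and its complement also has size $2^{r-1} = k/2$. The weight of $\phi(a)$ is exactly the size of the complement of the kernel, which equals $k/2$.

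I do not anticipate any real obstacles here: this is textbook material, and the only thing to be careful about is making the indexing convention explicit enough that the ``dimension equals $\log_2 k$'' statement pops out of the construction rather than requiring a separate argument. If a generating matrix is desired, the $r \times k$ matrix whose columns enumerate $\F_2^r$ realizes $\calH_k$ as its row span, exhibiting the $\log_2 k$ generators directly.
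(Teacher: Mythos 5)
Your proposal is correct and complete: the evaluation-code construction $\phi(a) = (\langle a,x\rangle)_{x\in\F_2^r}$ gives exactly the code the paper needs (dimension $\log_2 k$, all nonzero codewords of weight exactly $k/2$, hence all pairwise distances exactly $k/2$ by linearity). The paper states this as a standard fact without proof, so there is nothing to compare against; your argument is the textbook one and fills the gap correctly.
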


\begin{remark}
\label{rem:balanced-code}
    For our purposes, any linear code with dimension growing in $k$ and pairwise distance between $\frac{2}{5}+\delta$ and $\frac{3}{5}-\delta$ would suffice.
    The rate and distance of the chosen code determine the trade-off between the degree $d$ and the parameter $\eps$ in the $(1-\eps)$-vertex expansion.
    However, we do not optimize this dependence and use the Hadamard code for simplicity.
\end{remark}

\subsection{Proof of \texorpdfstring{\Cref{lem:base-graph}}{Lemma~\ref{lem:base-graph}}: structured bipartite graph construction} \label{sec:proof-lem-base-graph}

We now construct structured bipartite graphs (\Cref{def:structured-bipartite-graph}) with the parameters specified in \Cref{lem:base-graph}.
It is quite straightforward to see that the $\calC$-cubical incidence graph of a cubical complex from \Cref{thm:cubical-complexes} has the desired special set structure and small-set skeleton expansion, while we defer the proof of small-set $2\sqrt{k}$-neighbor expansion to \Cref{sec:subcube-density}.
However, since the construction from \Cref{thm:cubical-complexes} restricts the degrees to be products of primes, we must remove some faces according to their signatures to get the desired degrees $D_L, D_R$.

We will need the following folklore fact (see, e.g., \cite[Lemma 3.13]{HLMOZ25} for a proof).
\begin{lemma}   \label{lem:spectral-to-skeleton}
    For any $n$-vertex $d$-regular graph $G$ with largest nontrivial eigenvalue $\lambda$, and any subgraph $H$ of $G$ incident to at most $\delta n$ vertices, the largest eigenvalue of $H$ is at most $\lambda + \delta d$.
\end{lemma}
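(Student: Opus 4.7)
The plan is to reduce bounding $\lambda_{\max}(A_H)$ to bounding $\lambda_{\max}(A_{G[S]})$, where $S$ is the vertex set of $H$ (so $|S| \le \delta n$), and then to exploit the spectral decomposition of $A_G$ around its Perron eigenvector. The first reduction is immediate: $H$ is a subgraph of $G[S]$, so $A_H \le A_{G[S]}$ entrywise; since both matrices are nonnegative, Perron--Frobenius gives $\lambda_{\max}(A_H) \le \lambda_{\max}(A_{G[S]})$, and it suffices to bound the right-hand side.

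For the main bound I would use the variational characterization
\[
    \lambda_{\max}(A_{G[S]}) \;=\; \max_{x} \, x^{\top} A_G x
\]
over unit vectors $x$ supported on $S$ (extended by zero to $V(G)$). I would then decompose $A_G = \tfrac{d}{n} J + M$, where $J$ is the all-ones matrix and $M = A_G - \tfrac{d}{n} J$. By $d$-regularity, $\mathbf{1}$ is a common eigenvector of $A_G$ and $\tfrac{d}{n} J$ with eigenvalue $d$, so $M$ vanishes on $\mathbf{1}$ and agrees with $A_G$ on $\mathbf{1}^{\perp}$; hence the top eigenvalue of $M$ equals the largest non-Perron eigenvalue of $A_G$, which is at most $\lambda$.

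It remains to control each summand on a unit $x$ supported on $S$. The $M$-piece contributes at most $\lambda$ by the previous paragraph. For the rank-one piece, Cauchy--Schwarz gives
\[
    x^{\top}\!\left(\tfrac{d}{n} J\right)\! x \;=\; \tfrac{d}{n}\Bigl(\textstyle\sum_i x_i\Bigr)^{2} \;\le\; \tfrac{d}{n}\cdot |S|\cdot \|x\|^{2} \;\le\; \delta d,
\]
using $|S| \le \delta n$. Summing the two bounds yields $\lambda_{\max}(A_H) \le \lambda + \delta d$, as claimed.

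This is a standard argument and I do not foresee a genuine obstacle. The only mild subtlety is the convention on ``largest nontrivial eigenvalue'': the argument only requires an upper bound on the top eigenvalue of $M$, which equals the largest non-Perron eigenvalue of $A_G$ and is therefore bounded by $\lambda$ under either the signed or the absolute-value convention.
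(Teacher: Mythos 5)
Your proof is correct and is the standard argument for this folklore fact; the paper itself does not prove it but defers to \cite[Lemma 3.13]{HLMOZ25}, whose proof proceeds in essentially the same way (pass to $G[S]$ via Perron--Frobenius, write $A_G = \frac{d}{n}J + M$, bound the $M$-part by $\lambda$ and the rank-one part by $\delta d$ using Cauchy--Schwarz on a unit vector supported on at most $\delta n$ coordinates). Your closing remark on the sign convention is the right caveat and does not affect the application here, where $\lambda \ge 0$.
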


Let $p_1,\dots,p_k$ and $p_1',\dots,p_k'$ be $2k$ distinct primes congruent to $1$ mod $4$ such that each $D^{1/k}\le p_i \le 2 D^{1/k}$, and let $q$ be a prime of the form $1+4\ell\prod_{i=1}^k p_i p_i'$ for $\ell\in\bbN$.
These primes exist due to \Cref{fact:prime-density}.
Let $X$ be the cubical complex given by \Cref{thm:cubical-complexes} for $p_1,\dots,p_k$ and $q$, and let $X'$ be the corresponding cubical complex for $p_1',\dots,p_k'$ and $q$.
Let $\calC = \calH_k \subseteq \F_2^k$ be the Hadamard code, let $\ul{D}_L \coloneqq \prod_{i=1}^k (p_i+1)$, and let $\ul{D}_R \coloneqq \prod_{i=1}^k (p_i'+1)$.
Finally, let $\ul{G}_L = (\ul{L}, M, E_{\ul{L}})$ and $\ul{G}_R = (\ul{R}, M, E_{\ul{R}})$ be the $\calC$-cubical incidence graphs (\Cref{def:cubical-incidence-graph}) of $X$ and $X'$ respectively.

We first prove the desired properties for $\ul{G}_L$ and $\ul{G}_R$, and then show how to construct $G_L$ and $G_R$ from them, which inherit the desired properties and additionally are $(k,D_L)$-biregular and $(k,D_R)$-biregular respectively.

\parhead{Small-set skeleton expansion.}
Recall that $M = \Gamma \times \calC$ has $|\calC| = k$ parts,
and the skeleton of $X$ (\Cref{def:skeleton-expansion}) is the simple graph on $M$ where vertices $(g, x), (h, y) \in M$ are connected if they are contained in some face $f \in X(k)$.
Thus, the skeleton of $X$ is the union of bipartite graphs over each pair $x \neq y \in \calC$ with edges $\{(g,x), (g \cdot \prod_{i=1}^k a_i^{x_i \oplus y_i}, y) \}$ for $g\in \Gamma$ and $a_i \in A_i$.
Since $x,y$ have distance exactly $k/2$, the degree of the bipartite graph is $d_{x\oplus y} = \prod_{i=1}^k |A_i|^{x_i \oplus y_i} = O(\sqrt{D})$.
By the fact that $X$ is $2^k$-expanding (from \Cref{thm:cubical-complexes}), its second eigenvalue is at most $2^k \sqrt{d_{x\oplus y}} \leq O(D^{1/4})$.
By \Cref{lem:spectral-to-skeleton}, we get that $\ul{G}_L$ is an $O\parens*{D^{1/4}}$-skeleton expander.
The same argument applies for $\ul{G}_R$.

\parhead{Bound on the number of special sets.}
For every $x,y\in\calC$, along with any signature $\sigma$ on the subcube given by $C_{x,y}\coloneqq\{x\oplus z:\supp(z)\subseteq\supp(x\oplus y)\}$, let $Q_{\sigma}$ be the set of all signatures $\tau$ of the hypercube that extend $\sigma$.
The number of choices of $x,y$ and signature $\sigma$ on the subcube is at most $k^2\cdot \sqrt{D}$.
It can be verified that for any pair of vertices $u,v$, either the neighborhoods are empty, or are described by one of the sets $Q_{\sigma}$.

\parhead{Small-set $2\sqrt{k}$-neighbor expansion.}
The precise statement from which our bounds on small-set $2\sqrt{k}$-neighbor expansion follows is given below.
\begin{lemma}   \label{lem:small-set-subcube-expansion}
    For any subset of vertices $U\subseteq M$ of size at most $D^{-1}|M|$, we have that the number of vertices in $\ul{L}$ and $\ul{R}$ with more than $2\sqrt{k}$ neighbors in $U$ is at most $O\parens*{D^{5/8}}|U|$.
\end{lemma}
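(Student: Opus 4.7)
The plan is to combine \Cref{lem:s1-s2-s3-s4} (the four-tuple lemma), \Cref{lem:cube-count} (cube uniqueness), and the $2^k$-expansion of the cubical complex from \Cref{thm:cubical-complexes}. Let $T = \{f \in X(k) : |f \cap U| \ge 2\sqrt{k}\}$, and for each $f \in T$ set $B(f) = \{x \in \cC : (f_x, x) \in U\}$. Since $|B(f)| \ge 2\sqrt{k} = 2\sqrt{|\cC|}$, \Cref{lem:s1-s2-s3-s4} produces four distinct codewords $\vec\sigma = (\sigma_1,\sigma_2,\sigma_3,\sigma_4) \in B(f)^4$ with $\sigma_1 \oplus \sigma_2 \oplus \sigma_3 \oplus \sigma_4 = 0$. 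Writing $T(\vec\sigma) = \#\{f \in X(k) : (f_{\sigma_i}, \sigma_i) \in U \text{ for all } i\}$ and $u_{\sigma} := |U \cap (\Gamma \times \{\sigma\})|$, we therefore get $|T| \le \sum_{\vec\sigma} T(\vec\sigma)$, where the sum runs over the $O(k^3)$ ordered $4$-tuples of distinct Hadamard codewords summing to zero.

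Fix such a $\vec\sigma$ and let $I = \mathrm{supp}(\sigma_1 \oplus \sigma_2)$, $J = \mathrm{supp}(\sigma_1 \oplus \sigma_3)$. By \Cref{fact:hadamard}, $|I| = |J| = |I \triangle J| = k/2$, so $|I \cup J| = 3k/4$ and $|I \cap J| = k/4$. Parameterizing a $k$-face as $(g_1, (a_i)_{i \in [k]})$ with $g_1 = f_{\sigma_1}$ and $a_i \in A_i$, the labels $(a_i)_{i \notin I \cup J}$ are unconstrained by \Cref{lem:cube-count}, contributing $\prod_{i \notin I \cup J} |A_i| = D^{1/4}$ extensions. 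Hence $T(\vec\sigma) = D^{1/4} \sum_{g_1 \in U_{\sigma_1}} M(g_1)$, where $M(g_1)$ counts tuples $(a_i)_{i \in I \cup J}$ satisfying $g_1 a_I \in U_{\sigma_2}$, $g_1 a_J \in U_{\sigma_3}$, $g_1 a_{I \triangle J} \in U_{\sigma_4}$ (with $a_X := \prod_{i \in X} a_i$). The key combinatorial point is that each element of $I \cup J$ lies in exactly two of the three sets $I, J, I \triangle J$; Shearer's entropy inequality (Loomis--Whitney) therefore gives $M(g_1) \le \sqrt{N_I(g_1) N_J(g_1) N_{I \triangle J}(g_1)}$, where $N_X(g_1)$ equals the degree of $g_1$ into the appropriate $U_{\sigma_?}$ inside the bipartite graph $\mathcal{I}_{\sigma_1, \sigma_?}$ from \Cref{def:expanding-cubical-complex}.

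To aggregate over $g_1$, apply H\"older's inequality with exponents $(4, 4, 2)$:
\[
\sum_{g_1} \sqrt{N_I N_J N_{I \triangle J}} \;\le\; \left( \sum N_I^2 \right)^{1/4} \left( \sum N_J^2 \right)^{1/4} \left( \sum N_{I \triangle J} \right)^{1/2}.
\]
Each $\mathcal{I}_{\sigma_1, \sigma_?}$ is $\sqrt{D}$-regular with second eigenvalue $O(D^{1/4})$ by the Hadamard weight $k/2$ together with the $2^k$-expansion of $X$. The expander mixing lemma gives $\sum N_{I \triangle J} \le O(D^{1/4}) \sqrt{u_{\sigma_1} u_{\sigma_4}}$. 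For the second moments, $\sum_{g_1} N_X(g_1)^2 \le \vec 1_{U_{\sigma_?}}^\top A^\top A \vec 1_{U_{\sigma_?}}$ where $A$ is the biadjacency matrix; since $A^\top A$ has top eigenvalue $D$ (on $\vec 1_\Gamma$) and all remaining eigenvalues at most $\sqrt{D}$, decomposing $\vec 1_{U_{\sigma_?}}$ in this eigenbasis yields $\sum N_X^2 \le D u_{\sigma_?}^2/|\Gamma| + O(\sqrt{D}) u_{\sigma_?}$, and under the hypothesis $|U| \le D^{-1}|M|$ the second term dominates, giving $\sum N_X^2 \le O(\sqrt{D}) u_{\sigma_?}$. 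Substituting,
\[
T(\vec\sigma) \;\le\; D^{1/4} \cdot (\sqrt{D}\, u_{\sigma_2})^{1/4} (\sqrt{D}\, u_{\sigma_3})^{1/4} (D^{1/4} \sqrt{u_{\sigma_1} u_{\sigma_4}})^{1/2} \;=\; O(D^{5/8}) \,(u_{\sigma_1} u_{\sigma_2} u_{\sigma_3} u_{\sigma_4})^{1/4}.
\]

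Finally, AM-GM gives $(u_{\sigma_1} u_{\sigma_2} u_{\sigma_3} u_{\sigma_4})^{1/4} \le \tfrac{1}{4}(u_{\sigma_1} + u_{\sigma_2} + u_{\sigma_3} + u_{\sigma_4})$, so summing over the $O(k^3)$ valid $\vec\sigma$ produces $|T| \le O(k^2)\, D^{5/8} \sum_\sigma u_\sigma = O_k(1) \cdot D^{5/8} |U|$, and the bound for $\ul{R}$ follows from the symmetric argument applied to $\ul{G}_R$. The main technical obstacle is the second-moment estimate $\sum N_X^2 \le O(\sqrt{D}) u_{\sigma_?}$: the naive bound $N_X \le \sqrt{D}$ combined with expander mixing only yields a $D^{3/4} |U|$ bound, and it is precisely the sharper spectral argument on $A^\top A$ (enabled by the small-set regime $|U| \le D^{-1}|M|$, which kills the $\vec 1_\Gamma$-component) that upgrades the exponent from $3/4$ to $5/8$.
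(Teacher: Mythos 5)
Your proof is correct, and for most of its length it coincides with the paper's: the reduction via \Cref{lem:s1-s2-s3-s4} to four-tuples summing to zero, the $3k/4$-dimensional subcube with its $O_k(D^{1/4})$ extension factor (\Cref{lem:cube-count} together with \Cref{claim:abc}), and the per-vertex Loomis--Whitney/Shearer bound $\sqrt{N_{\sigma_2}N_{\sigma_3}N_{\sigma_4}}$ are exactly \Cref{lem:F(u;U)}. Where you genuinely diverge is the aggregation over $u\in U_{\sigma_1}$. The paper buckets vertices dyadically by $\max_s|N_s(u)\cap U|$, bounds each level set with a first-moment expander-mixing estimate (\Cref{lem:bound-Ussprime}), and sums the resulting geometric series, which peaks at $2^{\alpha}\approx D^{1/4}$. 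You instead apply H\"older with exponents $(4,4,2)$, feeding it the same expander-mixing edge count for the linear factor and a second-moment bound $\sum_g N(g)^2 \le O_k(\sqrt{D})\,u_\sigma$ read off from the spectrum of $A^{\top}A$ (the small-set hypothesis $|U|\le D^{-1}|M|$ suppressing the all-ones component, the remaining eigenvalues being at most $\lambda_2(\calI_{\sigma_1,s})^2 = O_k(\sqrt{D})$). The two routes carry the same spectral content --- your second-moment estimate is equivalent, via Markov, to the paper's level-set bound $|U_{\sigma_1,s}(\alpha)| \le O_k(1)\sqrt{D}\,2^{-2\alpha}|U|$ --- but yours dispenses with the dyadic decomposition and reduces the exponent bookkeeping to the single computation $1/4+1/8+1/8+1/8=5/8$; the AM--GM step converting $(u_{\sigma_1}u_{\sigma_2}u_{\sigma_3}u_{\sigma_4})^{1/4}$ into $\frac14\sum_i u_{\sigma_i}$ is a clean way to close the sum over tuples that the paper does not need since it bounds each factor by $|U|$ outright. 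Your closing diagnosis of where the improvement over the naive exponent comes from is also accurate.
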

We defer the proof of \Cref{lem:small-set-subcube-expansion} to \Cref{sec:subcube-density}, and describe how to construct $G_L$ and $G_R$.

\parhead{Satisfying degree constraints.}
There is a collection $\ul{\calS}_L$ of $\ul{D}_L$ distinct signatures $\tau$ such that every $m\in M$ is incident to exactly one element of $\ul{L}$ with signature $\tau$ in $\ul{G}_L$.
Likewise, there is a collection $\ul{\calS}_R$ of $\ul{D}_R$ distinct signatures $\tau$ such that every $m\in M$ is incident to exactly one element of $\ul{R}$ with signature $\tau$ in $\ul{G}_R$.

We pick an arbitrary $D_L$-sized subcollection $\calS_L$ of $\ul{\calS}_L$ and an arbitrary $D_R$-sized subcollection $\calS_R$ of $\ul{\calS}_R$, and define $L$ and $R$ as:
\[
    L\coloneqq\braces*{ v\in\ul{L}: \mathrm{Signature}(v) \in \calS_L } \mcom \qquad R \coloneqq \braces*{ v\in\ul{R}: \mathrm{Signature}(v) \in \calS_R }\mper
\]
We now define $G_L$ and $G_R$ as the induced subgraphs $\ul{G}_L[L,M]$ and $\ul{G}_R[R,M]$ respectively.
The graphs $G_L$ and $G_R$ are $(k, D_L)$- and $(k, D_R)$-biregular bipartite graphs, respectively, and each inherits the desired small-set skeleton expansion and small-set $2\sqrt{k}$-neighbor expansion properties from its parent graph.

\parhead{Neighborhood functions.}
Arbitrarily order the $D_L$ signatures in $\calS_L$ as $\ell_1,\dots,\ell_{D_L}$, and the $D_R$ signatures in $\calS_R$ as $r_1,\dots,r_{D_R}$.
For any vertex $u\in M$ and $i\in\bracks*{D_L}$, the function $\LNbr_u(i)$ maps to the neighbor of $u$ in $L$ with the signature $\ell_i$, and similarly for $i\in\bracks*{D_R}$, $\RNbr_u(i)$ maps to the neighbor of $u$ with signature $r_i$.
\qed
\subsection{Small-set subcube density in cubical complexes} \label{sec:subcube-density}

In this section, we prove \Cref{lem:small-set-subcube-expansion}, which states that for any small enough subset $U \subseteq M = \Gamma \times \calH_k$, there are at most $O_k(D^{5/8}) |U|$ faces $f\in X(k)$ that contain at least $2\sqrt{k}$ vertices in $U$.
Here, recall that $\calH_k \subseteq \F_2^k$ is the $k$-th Hadamard code of distance $k/2$ (\Cref{fact:hadamard}).
Thus, the following lemma directly implies \Cref{lem:small-set-subcube-expansion}.

\begin{lemma}   \label{lem:small-set-subcube-density}
    Let $\Gamma$ be a group with cubical generating sets $A_1,\dots,A_k$ such that $\max_{i\in[k]}\abs*{A_i} \le 2 \cdot\min_{i\in[k]}\abs*{A_i}$.
    Let $D\coloneqq \prod_{i\in[k]}\abs*{A_i}$, and
    let $X = \Cay(\Gamma; (A_1, \dots, A_k))$ be a $2^k$-expanding cubical complex with vertex set $X(0) = \Gamma \times \F_2^k$.
    Then, for any $U\subseteq \Gamma \times \calH_k$ where $|U| \le D^{-1} |\Gamma \times \calH_k|$, 
    we have:
    \[
        \abs*{ \braces*{ f \in X(k) : |f \cap U| \ge 2\sqrt{k} } } \le O_k\parens*{D^{5/8}}\cdot |U|\mper
    \]
\end{lemma}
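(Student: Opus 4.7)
The plan is to charge each heavy face to a structured combinatorial witness from $U$, then to bound the number of such witnesses using expansion of the cubical complex.

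First I would invoke a pigeonhole-style Hadamard lemma (presumably the referenced Lemma on $\sigma_1+\sigma_2+\sigma_3+\sigma_4=0$): any $B \subseteq \calH_k$ with $|B| \ge 2\sqrt{k}$ contains four distinct elements summing to zero, since the $\binom{|B|}{2} > k$ unordered pairs in $B$ have pair-sums in $\calH_k$ of size $k$, forcing two distinct pairs to share a sum and producing four distinct elements $x_1,x_2,x_3,x_4$ with $x_1+x_2+x_3+x_4=0$. Hence every heavy face $f$ admits a sum-zero 4-tuple of Hadamard coordinates $(x_1,x_2,x_3,x_4)$ with $(f_{x_i}, x_i) \in U$ for all $i$, and $|\{f : |f\cap U|\ge 2\sqrt{k}\}|$ is bounded by the number of such (face, sum-zero 4-tuple) pairs.

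Next I would count these pairs structurally. A sum-zero 4-tuple of distinct coords in $\calH_k$ spans an affine 2-plane in $\F_2^k$ whose direction-supports $S_{12} = \supp(x_1 \oplus x_2)$ and $S_{13} = \supp(x_1 \oplus x_3)$ satisfy $|S_{12} \cup S_{13}| = 3k/4$ by the Hadamard balance property (pairwise distance exactly $k/2$). So by \Cref{lem:cube-count} each such 4-tuple of $U$-vertices lies in $\prod_{i \notin S_{12} \cup S_{13}} |A_i| = \Theta_k(D^{1/4})$ distinct $k$-faces, and any three of its four vertices uniquely determine the fourth. The problem reduces to bounding, for each fixed sum-zero $\vec{x}$, the number of common-face triples in $U_{x_1} \times U_{x_2} \times U_{x_3}$ whose unique extension lies in $U_{x_4}$.

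To bound this triple count, I would decompose $S_{12} \cup S_{13}$ into the three disjoint pieces $S_{12}\cap S_{13}$, $S_{12}\setminus S_{13}$, and $S_{13}\setminus S_{12}$ (each of size $k/4$, yielding a product set of size $\Theta_k(D^{1/4})$), and parametrize common-face triples by a single anchor $h \in \Gamma$ together with partial signatures in these three pieces. This converts the count into a spectral sum of the form $\sum_h \phi_1(h)\phi_2(h)\phi_3(h)$, where each $\phi_i(h)$ counts neighbors of $h$ in $U_{x_i}$ inside a $D^{1/4}$-regular Cayley graph on $\Gamma$ with second eigenvalue $O_k(D^{1/8})$, coming from the $2^k$-expansion of $X$ guaranteed by \Cref{thm:cubical-complexes}. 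Combining a Loomis--Whitney / Shearer-type inequality with the expander mixing lemma bounds this spectral sum, and summing over the $O_k(k^3)$ sum-zero coord 4-tuples via Cauchy--Schwarz bounds such as $\sum_x \sqrt{|U_x|} \le \sqrt{k|U|}$ yields the target $O_k(D^{5/8}) \cdot |U|$.

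The main technical challenge lies in the final spectral/combinatorial aggregation: a naive three-variable Shearer bound alone gives $O_k(D^{3/8}) \sqrt{|U_{x_1}||U_{x_2}||U_{x_3}|}$ per coord tuple, which upon summation picks up an extra factor of $|U|^{1/2}$. Achieving the sharp exponent $5/8$ with the correct linear dependence on $|U|$ requires simultaneously exploiting all the pairwise expander mixing relations together with the fourth-vertex-in-$U$ constraint, for instance via an AM-GM symmetrization $\phi_1\phi_2\phi_3 \le \tfrac{1}{3}(\phi_1^3+\phi_2^3+\phi_3^3)$ combined with a spectral estimate on each cube-moment $\sum_h \phi_i(h)^3$ obtained from the Rayleigh quotient of the corresponding Cayley graph on $U'_{x_i}$.
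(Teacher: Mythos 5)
Your overall architecture is the same as the paper's: pigeonhole on the Hadamard code to extract from every heavy face a sum-zero $4$-tuple of types (the paper's \Cref{lem:s1-s2-s3-s4}), reduce to the $3k/4$-dimensional subcube spanned by the three disjoint $k/4$-sized supports $a,b,c$ (\Cref{claim:abc}), pay the $\prod_{i\notin\Delta(\sigma)}|A_i| = O_k(D^{1/4})$ extension factor via \Cref{lem:cube-count}, and control the remaining count by Loomis--Whitney/Shearer combined with the expander mixing lemma. You also correctly diagnose the central obstruction: the global Shearer bound $\sqrt{N_{12}N_{13}N_{23}} = O_k(D^{3/8})\,|U|^{3/2}$ has the right power of $D$ but the wrong dependence on $|U|$.

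However, the step you yourself flag as the main challenge is exactly where the proposal does not close, and the specific devices you name would not deliver $D^{5/8}$. The correct localization (the paper's \Cref{lem:F(u;U)}) anchors at each $u=(g_1,\sigma_1)\in U_{\sigma_1}$ and applies Shearer \emph{inside the link of $u$}: parametrizing a $C_\sigma$-face through $u$ both by its $(a,b,c)$-coordinates reaching the $\sigma_2,\sigma_3$ corners and by the alternative coordinates reaching the $\sigma_4$ corner, the entropy inequality gives $|F_{C}(u;U;\sigma)|\le\prod_{s\in\{\sigma_2,\sigma_3,\sigma_4\}}|N_s(u)\cap U|^{1/2}$ --- note the exponent $1/2$, and that all three factors are neighbor counts of $u$ in the \emph{distance-$k/2$} graphs $\calI_{\sigma_1,s}$ of degree $\Theta_k(\sqrt{D})$ and second eigenvalue $O_k(D^{1/4})$, not the $D^{1/4}$-regular, $O_k(D^{1/8})$-eigenvalue quarter-cube graphs you describe. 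One then buckets $u$ dyadically by $\max_s|N_s(u)\cap U|\approx 2^\alpha$, uses expander mixing to get $|U_{\sigma_1}(\alpha)|\le O_k(1)\min\{1,\sqrt{D}\,2^{-2\alpha}\}|U|$, and sums $\sum_\alpha \min\{1,\sqrt{D}\,2^{-2\alpha}\}\cdot 2^{3\alpha/2}=O_k(D^{3/8})$; this is equivalent to bounding the $3/2$-moments $\sum_u|N_s(u)\cap U|^{3/2}$, not the third moments $\sum_h\phi_i(h)^3$. Your reformulation $\sum_h\phi_1(h)\phi_2(h)\phi_3(h)$ is a product of three independent local counts and therefore cannot encode the constraint that the fourth corner --- a joint function of all three subcube coordinates through the set-commutation relations --- lies in $U$; discarding that constraint leaves the three-corner count, for which these second-moment/mixing estimates only give $O_k(D^{1/2})|U|$ per subcube (up to logarithms), hence $O_k(D^{3/4})|U|$ after the extension factor rather than the claimed $D^{5/8}|U|$. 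So the missing ingredient is precisely the per-anchor, four-corner Shearer inequality with the square root.
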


\parhead{Notations.}
For a vertex $(g,s)\in X(0)$, we say that it has \emph{type} $s \in \F_2^k$.
We use $F_k(U; \ge 2\sqrt{k})$ to denote the set of $k$-faces $\braces*{ f \in X(k) : |f \cap U| \ge 2\sqrt{k} }$, which is what we will bound in \Cref{lem:small-set-subcube-density}.
More generally, for $\sigma \subseteq \calH_k$, we define $F_k(U; \sigma)$ to be the set of all $k$-faces whose vertices with types in $\sigma$ lie in $U$, i.e., $F_k(U;\sigma) \coloneqq \braces*{f\in X(k): (f_s, s)\in U,\ \forall s\in \sigma}$.
When restricterd to a subcube $C \subseteq \F_2^k$, we use $F_C(U; \sigma)$ to denote the $C$-faces in $X(C)$ (recall \Cref{def:cubical-complex}) whose vertices with types in $\sigma$ lie in $U$.

\medskip

Our first observation is that for any $f \in F_k(U; \ge 2\sqrt{k})$, $f \cap U$ must contain four vertices whose types sum to $0$.

\begin{lemma} \label{lem:s1-s2-s3-s4}
    Let $S \subseteq \calH_k$ be of size $\ge 2\sqrt{k}$. Then there exists a four-tuple of distinct elements $\sigma \in S^4$ for which $\sigma_1 \oplus \sigma_2 \oplus \sigma_3 \oplus \sigma_4 = 0$.
\end{lemma}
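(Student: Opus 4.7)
The plan is to exploit the linearity of the Hadamard code $\calH_k$ together with a pigeonhole argument on pairwise sums. Recall from \Cref{fact:hadamard} that $|\calH_k| = k$ and $\calH_k$ is linear, so in particular $\calH_k$ contains exactly $k-1$ nonzero elements.

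First, I would form the multiset of pairwise sums $\{a \oplus b : \{a,b\} \subseteq S,\ a\ne b\}$. Each such sum lies in $\calH_k \setminus \{0\}$ by linearity and distinctness. The number of unordered pairs is at least
\[
    \binom{|S|}{2} \;\ge\; \binom{2\sqrt{k}}{2} \;=\; 2k - \sqrt{k},
\]
which exceeds the number of possible nonzero sums $k-1$ for all $k \geq 2$. By pigeonhole, there exist two distinct pairs $\{\sigma_1,\sigma_2\}$ and $\{\sigma_3,\sigma_4\}$ in $S$ with $\sigma_1 \oplus \sigma_2 = \sigma_3 \oplus \sigma_4$, i.e.\ $\sigma_1 \oplus \sigma_2 \oplus \sigma_3 \oplus \sigma_4 = 0$.

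The only remaining step is to verify that the four elements $\sigma_1,\sigma_2,\sigma_3,\sigma_4$ are pairwise distinct. Suppose the two pairs share an element, say $\sigma_1 = \sigma_3$. Then $\sigma_1 \oplus \sigma_2 = \sigma_1 \oplus \sigma_4$ forces $\sigma_2 = \sigma_4$, making the two pairs identical, a contradiction. Hence the pairs are disjoint and we obtain four distinct elements, completing the proof.

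I do not anticipate any real obstacle here; the argument is a one-line pigeonhole, and the only subtlety is handling the fact that $2\sqrt{k}$ need not be an integer (which is absorbed into the inequality $\binom{|S|}{2} \ge 2k - \sqrt{k} > k - 1$). The use of the Hadamard code is essentially only through its linearity and the cardinality $|\calH_k| = k$; this is consistent with \Cref{rem:balanced-code}, which suggests that any sufficiently balanced linear code would suffice for the downstream argument, with the key structural requirement being the existence of such a four-tuple summing to zero within any sufficiently large subset.
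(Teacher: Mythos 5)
Your proof is correct and is essentially identical to the paper's: the same pigeonhole argument on the $\binom{|S|}{2} > k$ pairwise sums landing in the $k$-element (in your refinement, $(k-1)$-element) linear code $\calH_k$, followed by the same check that the two colliding pairs must be disjoint. The only (harmless) difference is that you note the sums are nonzero, which slightly sharpens the pigeonhole count but is not needed.
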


\begin{proof}
    Consider the set of sums of two distinct elements of $S$. Since there are $\parens*{{|S| \atop 2}} \ge \parens*{{2\sqrt{k}\atop 2}} > k$ such sums, whereas there are only $|\calH_k| = k$ possible values for the sum, there must be two distinct pairs of elements that have the same sum. Namely, there are elements $\sigma_1, \sigma_2, \sigma_3, \sigma_4 \in S$ for which $\sigma_1 + \sigma_2 = \sigma_3 + \sigma_4$.
    Note that $\sigma_1, \sigma_2, \sigma_3, \sigma_4$ must be pairwise distinct: if for instance $\sigma_1 = \sigma_3$, then $\sigma_2 = \sigma_4$ also, which implies that the pair $\{\sigma_1, \sigma_2\}$ is equal to the pair $\{\sigma_3, \sigma_4\}$.
\end{proof}

We may therefore partition the set $F_k(U; \ge 2\sqrt{k})$ according to the value of the four vertex types that sum to $0$. In particular, $F_k(U; \sigma)$ is the set of all $k$-faces that have four vertices of types $\sigma_1, \sigma_2, \sigma_3, \sigma_4$ in $U$. Then 
\[
    F_k(U; \ge 2\sqrt{k}) \subseteq \bigcup_{\sigma :\, \sigma_1\oplus\sigma_2\oplus\sigma_3\oplus\sigma_4 = 0} F_k(U; \sigma),
\]
which lets us bound $|F_k(U; \ge 2\sqrt{k})|$ by 
\[
    |F_k(U; \ge 2\sqrt{k})| \le \sum_{\sigma :\, \sigma_1\oplus\sigma_2\oplus\sigma_3\oplus\sigma_4 = 0} |F_k(U; \sigma)|.  \numberthis \label{eq:final-bound}
\]
It therefore suffices to upper bound the size of each $F_k(U; \sigma)$ individually.

To this end, fix $\sigma \in \calH_k^4$ for which $\sigma_1\oplus\sigma_2\oplus\sigma_3\oplus\sigma_4 = 0$.
The tuple $\sigma$ determines a subcube 
\[
    C_{\sigma} = \sigma_1 \oplus \bigoplus_{i \in \Delta(\sigma)} \{ 0, 1 \} \cdot e_i \mcom
    \numberthis \label{eq:subcube-sigma}
\] 
where 
\[
    \Delta(\sigma) \coloneqq \braces*{ i \in [k] : \exists j_1, j_2 \in [4] \text{ s.t. } \sigma_{j_1}[i] \neq \sigma_{j_2}[k]}
    = \bigcup_{j \in \{2,3,4\}} \supp(\sigma_1 \oplus \sigma_j) \mper
\]
Let us establish some properties of $\Delta(\sigma)$.

\begin{claim} \label{claim:abc}
    For any $(\sigma_1, \sigma_2, \sigma_3, \sigma_4) \in \calH_k^4$ that sum to $0$ over $\F_2^k$, there are three disjoint sets $a,b,c \subseteq [k]$, each of size $k/4$, for which 
    \begin{align*}
        \supp(\sigma_1 \oplus \sigma_2) &= a \cup b \\
        \supp(\sigma_1 \oplus \sigma_3) &= a \cup c \\
        \supp(\sigma_1 \oplus \sigma_4) &= b \cup c \mper
    \end{align*}
    In particular, $\Delta(\sigma) = a \cup b \cup c$ is of size $3k/4$.
\end{claim}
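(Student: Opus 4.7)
The plan is to read off the pairwise intersection sizes of the supports $\supp(\sigma_1\oplus\sigma_j)$ directly from the equidistance property of the Hadamard code (\Cref{fact:hadamard}). Set $\tau_j \coloneqq \sigma_1 \oplus \sigma_j$ for $j \in \{2,3,4\}$. Since the $\sigma_j$ are pairwise distinct Hadamard codewords, each $\tau_j$ has Hamming weight exactly $k/2$, and since $\sigma_1\oplus\sigma_2\oplus\sigma_3\oplus\sigma_4 = 0$ we also get the linear relation $\tau_2 \oplus \tau_3 \oplus \tau_4 = 0$. Thus the three supports are not independent: knowing any two determines the third.

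To pin down the combinatorics, I would partition $[k]$ into four regions according to the joint pattern of $(\tau_2[i], \tau_3[i])$, writing $R_{xy} = \{ i : \tau_2[i] = x,\ \tau_3[i] = y \}$. Because $\tau_4 = \tau_2 \oplus \tau_3$, we have
\[
\supp(\tau_2) = R_{10}\cup R_{11},\quad \supp(\tau_3) = R_{01}\cup R_{11},\quad \supp(\tau_4) = R_{01}\cup R_{10}.
\]
Each of these has size $k/2$, which gives three linear equations in $|R_{01}|, |R_{10}|, |R_{11}|$ whose unique solution is $|R_{01}| = |R_{10}| = |R_{11}| = k/4$ (and then $|R_{00}| = k/4$ as well).

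Finally, I would set $a \coloneqq R_{11}$, $b \coloneqq R_{10}$, $c \coloneqq R_{01}$. These are pairwise disjoint by construction, each of size $k/4$, and the identities $\supp(\tau_2) = a\cup b$, $\supp(\tau_3) = a\cup c$, $\supp(\tau_4) = b\cup c$ are exactly what the claim asserts. Then $\Delta(\sigma) = \supp(\tau_2)\cup\supp(\tau_3)\cup\supp(\tau_4) = a\cup b\cup c$ has size $3k/4$. There is no real obstacle: the whole argument is a three-variable linear system, and the only input beyond basic $\F_2$-arithmetic is the equidistance property of $\calH_k$.
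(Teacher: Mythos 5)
Your proof is correct and is essentially the paper's argument in slightly different packaging: your $R_{11}, R_{10}, R_{01}$ are exactly the paper's $a = \supp(\tau_2)\cap\supp(\tau_3)$, $b = \supp(\tau_2)\setminus a$, $c = \supp(\tau_3)\setminus a$, and solving your $3\times 3$ linear system is equivalent to the paper's computation that $\mathrm{dist}(\tau_2,\tau_3) = k - 2|a| = k/2$. The only (shared, harmless) implicit assumption is that the $\sigma_j$ are pairwise distinct, which holds in the only context where the claim is invoked.
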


\begin{proof}    
    Notice that $\sigma'_2 \coloneqq \sigma_2 \oplus \sigma_1$ and $\sigma'_3 \coloneqq \sigma_3 \oplus \sigma_1$ are distinct codewords of $\calH_k$, and hence have weight $k/2$. Furthermore, the distance between $\sigma'_2$ and $\sigma'_3$ is also $k/2$. Define $a = \supp(\sigma'_2) \cap \supp(\sigma'_3)$. Then, the Hamming distance between $\sigma'_2$ and $\sigma'_3$, which is $k/2$, can also be written as $(k/2 - |a|) + (k/2 - |a|)$, implying that $|a| = k/4$. We can now define $b = \supp(\sigma'_2) \backslash a$ and $c = \supp(\sigma'_3) \backslash a$, which will both be of size $k/4$ as well. We simply need to check that $\supp(\sigma_4 \oplus \sigma_1) = b \cup c$, which we do as follows: $\sigma_4 \oplus \sigma_1 = \sigma_2 \oplus \sigma_3 = \sigma'_2 \oplus \sigma'_3$ implies that $\supp(\sigma_4 \oplus \sigma_1) = \supp(\sigma'_2 \oplus \sigma'_3) = b \cup c$.
\end{proof}

For any element $x\in\calH_k$, we use $U_{x}$ to denote $U\cap (\Gamma \times \{x\})$.
For a subcube $C$ of $\zo^k$, recall that $F_{C}(U; \sigma)$ is all $C$-faces with a vertex in each $U_{\sigma_i}$ for $\sigma_i\in\sigma$.
By \Cref{lem:cube-count}, each $f' \in F_{C_{\sigma}}(U; \sigma)$ can be extended to a $k$-face $f \in F_k(U;\sigma)$ in $\prod_{i \not\in \Delta(\sigma)} \abs*{A_i}$ ways.

In the remainder of this section, we will use $C$ to refer to $C_{\sigma}$.
We can further partition $F_{C}(U; \sigma)$ based on the value of its type-$\sigma_1$ vertex.
That is, for $u \in U_{\sigma_1}$, define
\[
    F_{C}(u;U;\sigma) \coloneqq \{ f \in F_{C}(U; \sigma) : u \in f \} \mper
\]
We will bound the size of $F_{C}(u;U;\sigma)$ in the following lemma. 

In order to state the bound, we define the $s$-neighborhood $N_{s}(u)$ of $u \in U_{\sigma_1}$, for $s\in \F_2^k$, as all the neighbors of $u$ in the bipartite graph $\calI_{\sigma_1, s}$ between $\Gamma \times \{\sigma_1\}$ and $\Gamma \times \{s\}$ (recall \Cref{def:expanding-cubical-complex}).

\newcommand{\ahat}{\ol{a}}
\newcommand{\bhat}{\ol{b}}
\newcommand{\chat}{\ol{c}}

\begin{lemma} \label{lem:F(u;U)}
    Suppose that $u \in U_{\sigma_1}$ is such that $|N_{s}(u) \cap U| \le \nu$ for $s \in \{ \sigma_2, \sigma_3, \sigma_4 \}$. Then 
    \[
        |F_{C}(u;U;\sigma)| \le \nu^{3/2} \mper
    \]
\end{lemma}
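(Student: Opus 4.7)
The plan is to set up a bijection between $C$-faces through $u$ and tuples in $\prod_{i \in \Delta(\sigma)} A_i$, translate the constraints ``$v_j \in U$'' into projection constraints on these tuples, and then apply a Loomis--Whitney style inequality. Throughout, I write $\Delta(\sigma) = a \sqcup b \sqcup c$ as in \Cref{claim:abc}, so that $\supp(\sigma_1 \oplus \sigma_2) = a \cup b$, $\supp(\sigma_1 \oplus \sigma_3) = a \cup c$, and $\supp(\sigma_1 \oplus \sigma_4) = b \cup c$.

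First I would parameterize $C$-faces containing $u = (g, \sigma_1)$. Using the construction in the proof of \Cref{lem:cube-count} (and the fact that for $U = \{u\}$ we have $S(U) = \emptyset$), the $k$-faces through $u$ are in bijection with tuples $(\alpha_i)_{i \in [k]} \in \prod_{i=1}^k A_i$; taking the quotient by the $\prod_{i \notin \Delta(\sigma)} |A_i|$-many extensions in the directions outside $\Delta(\sigma)$, the $C$-faces through $u$ are in bijection with tuples $\vec{\alpha} = (\alpha_i)_{i \in \Delta(\sigma)} \in \prod_{i \in \Delta(\sigma)} A_i$. Moreover, within the face $f^{\vec\alpha}$, the vertex of type $\sigma_j$ is exactly $(g \cdot \prod_{i \in \supp(\sigma_1 \oplus \sigma_j)} \alpha_i, \sigma_j)$ for any ordering of the product consistent with the commutation relations; crucially, this vertex depends only on the projection of $\vec\alpha$ onto the coordinates in $\supp(\sigma_1 \oplus \sigma_j)$.

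Next I would translate the ``$v_j \in U$'' constraints to projection constraints. The map $(\alpha_i)_{i \in a \cup b} \mapsto (g \cdot \prod_{i \in a\cup b}\alpha_i, \sigma_2)$ is a bijection onto $N_{\sigma_2}(u)$, because the cubical generating set property $|A_1 \cdots A_k| = \prod |A_i|$ (restricted to any subset of coordinates) makes the product map injective. Hence ``$v_2 \in U$'' translates to $(\alpha_i)_{i \in a\cup b} \in T_2$ for some set $T_2 \subseteq \prod_{i \in a\cup b} A_i$ of size at most $|N_{\sigma_2}(u) \cap U| \le \nu$. Define $T_3$ and $T_4$ analogously with $|T_3|, |T_4| \le \nu$. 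The count $|F_C(u;U;\sigma)|$ is therefore exactly the number of $\vec\alpha \in \prod_{i \in \Delta(\sigma)} A_i$ whose three projections land in $T_2, T_3, T_4$ respectively.

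Finally, I would apply Loomis--Whitney. Every coordinate in $a$ lies in $a\cup b$ and $a\cup c$, every coordinate in $b$ lies in $a\cup b$ and $b\cup c$, every coordinate in $c$ lies in $a\cup c$ and $b\cup c$; so each coordinate in $\Delta(\sigma)$ appears in exactly two of the three index-sets $\{a\cup b,\ a\cup c,\ b\cup c\}$. By Loomis--Whitney (equivalently, Shearer's inequality applied to the uniform distribution on the valid set), the number of such $\vec\alpha$ is at most $\sqrt{|T_2|\cdot|T_3|\cdot|T_4|} \le \nu^{3/2}$. The main conceptual work is Step~1, the parameterization; once that is set up the bound reduces cleanly to a standard projection inequality.
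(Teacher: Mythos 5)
Your overall strategy is the same as the paper's: identify the three membership constraints with the three pairwise projections onto $a\cup b$, $a\cup c$, $b\cup c$ from \Cref{claim:abc}, and apply Shearer/Loomis--Whitney. The one step that needs more care than you give it is the claim that, for a \emph{single} tuple $\vec\alpha=(\alpha_i)_{i\in\Delta(\sigma)}$ parameterizing the $C$-face, all three vertices $f_{\sigma_2},f_{\sigma_3},f_{\sigma_4}$ depend only on the corresponding projections of $\vec\alpha$. This is false for the parameterization you cite (the one in the proof of \Cref{lem:cube-count}, which fixes an order $i_1,\dots,i_\ell$ and encodes the face by the opposite-corner product $g\,\alpha_{i_1}\cdots\alpha_{i_\ell}$): already in a $2$-dimensional cube with corner $g$ and opposite corner $g\alpha_1\alpha_2$, the vertex obtained by flipping only coordinate $2$ is $g\alpha_2'$ where $\alpha_1\alpha_2=\alpha_2'\alpha_1'$, and $\alpha_2'$ genuinely depends on $\alpha_1$ because the sets commute while the elements do not. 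The claim \emph{is} true if you instead take $\alpha_i\coloneqq g^{-1}f_{\sigma_1\oplus e_i}$, the first-step edge labels out of $u$: the restriction of $f$ to the subcube spanned by $\supp(\sigma_1\oplus\sigma_j)$ contains $u$ together with $(g\alpha_i,\sigma_1\oplus e_i)$ for all $i\in\supp(\sigma_1\oplus\sigma_j)$, and the uniqueness part of \Cref{lem:cube-count} shows that this sub-face --- hence $f_{\sigma_j}$ --- is determined by that projection alone. You should then also drop the formula $f_{\sigma_j}=g\cdot\prod_i\alpha_i$, which is not literally correct under any single consistent ordering; only the functional dependence matters for Loomis--Whitney. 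The bijectivity of each projection map onto $N_{\sigma_j}(u)$ follows, as you say, from $|A_{i_1}\cdots A_{i_m}|=\prod_m|A_{i_m}|$.

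With that repair your argument is correct and is essentially the paper's proof in different clothing. The paper avoids committing to an ``all three constraints are projections of one tuple'' parameterization: it encodes the face by two overlapping triples $(\ahat,\bhat,\chat)$ and $(\ahat,\bhat',\chat')$, where $g_4=g_1\bhat'\chat'$ uses fresh variables rather than the $(b\cup c)$-projection of the first triple, and then carries out the Shearer computation by hand, $H(f)=\tfrac12 H(\ahat,\bhat,\chat)+\tfrac12 H(\ahat,\bhat',\chat')\le\tfrac12\parens*{H(\ahat,\bhat)+H(\ahat,\chat)+H(\bhat',\chat')}$. That bookkeeping is precisely what substitutes for the projection claim you assert; either route works, but this justification is the one part of the lemma that cannot be waved through.
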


\begin{proof}
    Let $a,b,c$ be the partition of $\Delta(\sigma) \subseteq [k]$ given by \Cref{claim:abc}. Define $A^{(a)} = \prod_{i \in a} A_i$, $A^{(b)} = \prod_{i \in b} A_i$, and $A^{(c)} = \prod_{i \in c} A_i$. There is a one-to-one correspondence between $N_{\sigma_2}(u)$ and $A^{(a)}A^{(b)} = A^{(b)}A^{(a)}$, $N_{\sigma_3}(u)$ and $A^{(a)}A^{(c)} = A^{(c)}A^{(a)}$, and $N_{\sigma_3}(u)$ and $A^{(b)}A^{(c)} = A^{(c)}A^{(b)}$.
    For instance, we can view $N_{\sigma_2}$ as the set of vertices obtained by starting from $u = (g_1, \sigma_1)$, and then multiplying $g_1$ first by an $A^{(a)}$ element and then an $A^{(b)}$ element to obtain a type-$\sigma_2$ vertex.

    By \Cref{lem:cube-count}, any $C$-face containing $u = (g_1,\sigma_1)$ can be uniquely specified by choosing one element each from $A^{(a)}$, $A^{(b)}$, and $A^{(c)}$. Concretely, \Cref{lem:cube-count} implies that for
    \(
        \ahat \in A^{(a)}, \bhat \in A^{(b)}, \chat \in A^{(c)},
    \)
    and $g_2 = g_1\,\ahat\,\bhat$, $g_3 = g_1\,\ahat\,\chat$,
    there is a unique $C$-face $f$ containing $(g_1,\sigma_1),(g_2,\sigma_2),(g_3,\sigma_3)$, where for $f_{\sigma_4}=(g_4,\sigma_4)$ we have $g_4 = g_1\,\bhat'\,\chat'$ for some $\bhat' \in A^{(b)}$ and $\chat'\in A^{(c)}$.
    Similarly, $f$ is also uniquely determined by the choice of $\ahat$, $\bhat'$, and $\chat'$.

    Let $H(\cdot)$ be the entropy function, and let $\boldf$ denote the random variable obtained by sampling a uniformly random $C$-face in $F_{C}(u;U;\sigma)$, and let $\ahat, \bhat, \chat, \bhat', \chat'$ denote the corresponding group elements. Then,
    \begin{align*}
        \log_2 |F_{C}(u;U;\sigma)|
        &= H\parens*{ \boldf } \\
        &= \frac12 \cdot H\big(\ahat, \bhat, \chat\big) + \frac12 \cdot H\big(\ahat, \bhat', \chat'\big) \\
        &= \frac12 \cdot \left( H\big(\ahat,\bhat\big) + H\big(\chat \mid \ahat, \bhat\big) \right)
        + \frac12 \cdot \left( H\left(\ahat\right) + H\big(\bhat', \chat' \mid \ahat\big) \right) \\
        &\le \frac12 \cdot \left( H\big(\ahat, \bhat\big) + H\left(\chat \mid \ahat\right) + H\left(\ahat\right) + H\big(\bhat', \chat'\big) \right) \\
        &= \frac12 \cdot \left( H\big(\ahat, \bhat\big) + H\big(\ahat, \chat\big) + H\big(\bhat', \chat'\big) \right) \\
        &\le \frac12 \cdot \left( \log_2 \lvert N_{s_2}(u) \cap U \rvert
        + \log_2 \lvert N_{s_3}(u) \cap U \rvert
        + \log_2 \lvert N_{s_4}(u) \cap U \rvert \right) \mcom
    \end{align*}
    or equivalently,
    \begin{align*}
        |F_{C}(u;U;\sigma)| &\le \sqrt{|N_{\sigma_2}(u) \cap U| \cdot |N_{\sigma_3}(u) \cap U| \cdot |N_{\sigma_4}(u) \cap U|} \mper \qedhere
    \end{align*}
\end{proof}

In \Cref{lem:F(u;U)}, we bounded the size of $F_{C}(u;U;\sigma)$ in terms of $\max_{s \in \{ \sigma_2, \sigma_3, \sigma_4 \}} |N_{s}(u) \cap U|$.
We also need to establish an upper bound on the number of $u \in U_{\sigma_1}$ with a given value of $\max_{s \in \{ \sigma_2, \sigma_3, \sigma_4 \}} |N_{s}(u) \cap U|$. 
To do this, we use the fact that our cubical complex $X$ is $2^k$-expanding, i.e., each bipartite graph $\calI_{\sigma_1, s}$ has second eigenvalue at most $2^k \sqrt{d_{\sigma_1 \oplus s}(X)} \leq 2^k \sqrt{d_{k/2}(X)}$ (\Cref{def:expanding-cubical-complex,thm:cubical-complexes}).
Here, we use that $\sigma_1 \oplus s$ has weight $k/2$ for $s\in \{\sigma_2,\sigma_3,\sigma_4\}$.

By our assumption that $\max_{i\in[k]} |A_i| \leq 2 \cdot \min_{i\in[k]}|A_i|$ and $D = \prod_{i\in[k]} |A_i|$, we have $d_{k/2} \coloneqq d_{k/2}(X) \le \sqrt{2^k D} = O_k(1) \cdot \sqrt{D}$.
For $1 \le \alpha \le 1+\log_2 d_{k/2}$, define 
\[
    U_{\sigma_1}(\alpha) \coloneqq \braces*{ u \in U_{\sigma_1} : \max_{s \in \{ \sigma_2, \sigma_3, \sigma_4 \}} |N_{s}(u) \cap U| \in [2^{\alpha-1}, 2^\alpha) } \mper
\]

\begin{lemma} \label{lem:bound-Ussprime}
    For any $\sigma \in \calH_k^4$ with $\sigma_1 \oplus \sigma_2 \oplus \sigma_3 \oplus \sigma_4 = 0$, it holds that 
    \[
        \abs*{U_{\sigma_1}(\alpha)} \le O_k(1) \cdot \min \left\{ 1,\ \frac{\sqrt{D}}{2^{2\alpha}} \right\} \cdot |U| \mper
    \]
\end{lemma}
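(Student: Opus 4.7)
The plan is to split $U_{\sigma_1}(\alpha)$ according to which direction $s \in \{\sigma_2,\sigma_3,\sigma_4\}$ realizes the maximum, and then apply the expander mixing lemma to the three relevant bipartite ``neighborhood'' graphs $\calI_{\sigma_1, \sigma_j}$. The trivial bound $|U_{\sigma_1}(\alpha)| \le |U_{\sigma_1}| \le |U|$ accounts for the ``$1$'' inside the minimum, so the only real work is proving $|U_{\sigma_1}(\alpha)| \le O_k(1) \cdot \frac{\sqrt{D}}{2^{2\alpha}}|U|$.

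For each $j \in \{2,3,4\}$, let $U_{\sigma_1}^{(j)}(\alpha) \subseteq U_{\sigma_1}(\alpha)$ denote those $u$ for which the maximum is achieved at $s = \sigma_j$, so that each such $u$ has at least $2^{\alpha-1}$ neighbors in $U_{\sigma_j}$ inside the bipartite graph $\calI_{\sigma_1,\sigma_j}$. Since $\sigma_j \in \calH_k$ and $\sigma_1 \in \calH_k$, the Hamming distance $|\mathrm{supp}(\sigma_1 \oplus \sigma_j)| = k/2$ by \Cref{fact:hadamard}, so $\calI_{\sigma_1,\sigma_j}$ is a $d_{k/2}$-regular bipartite graph (with $d_{k/2} \le O_k(1) \cdot \sqrt{D}$ under the balance assumption on the $|A_i|$), and by \Cref{def:expanding-cubical-complex} combined with $X$ being $2^k$-expanding, its second eigenvalue is at most $O_k(\sqrt{d_{k/2}})$.

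Next I would apply the expander mixing lemma to $\calI_{\sigma_1, \sigma_j}$ on the pair $\bigl(U_{\sigma_1}^{(j)}(\alpha), U_{\sigma_j}\bigr)$. The lower bound on the edge count from the degree condition reads
\[
    2^{\alpha-1} \cdot \bigl|U_{\sigma_1}^{(j)}(\alpha)\bigr| \;\le\; e\bigl(U_{\sigma_1}^{(j)}(\alpha),\, U_{\sigma_j}\bigr) \;\le\; \frac{d_{k/2}\bigl|U_{\sigma_1}^{(j)}(\alpha)\bigr|\, |U_{\sigma_j}|}{|\Gamma|} \;+\; O_k(\sqrt{d_{k/2}})\cdot \sqrt{\bigl|U_{\sigma_1}^{(j)}(\alpha)\bigr|\, |U_{\sigma_j}|}.
\]
The hypothesis $|U| \le D^{-1}|M| = D^{-1}k|\Gamma|$ forces the ``expected'' term to satisfy $\frac{d_{k/2}|U_{\sigma_j}|}{|\Gamma|} \le \frac{k d_{k/2}}{D} = O_k(1/\sqrt{D})$, which is dominated by $2^{\alpha-2}$ for $D$ large in terms of $k$. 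Hence the mixing-lemma error term dominates, and rearranging (dividing by $\sqrt{|U_{\sigma_1}^{(j)}(\alpha)|}$ and squaring) yields
\[
    \bigl|U_{\sigma_1}^{(j)}(\alpha)\bigr| \;\le\; O_k\!\left(\frac{d_{k/2}}{2^{2\alpha}}\right)\cdot |U_{\sigma_j}| \;\le\; O_k(1)\cdot \frac{\sqrt{D}}{2^{2\alpha}}\cdot |U|.
\]
Summing over the three choices $j \in \{2,3,4\}$ gives the desired bound. The only subtlety I anticipate is cleanly absorbing the ``main term'' $\frac{d_{k/2}|U_{\sigma_j}|}{|\Gamma|}$ into the left-hand side, which requires the $|U| \le D^{-1}|M|$ hypothesis and $D$ large enough relative to $k$; everything else is a clean application of the expander mixing lemma combined with the Hadamard distance-$k/2$ property.
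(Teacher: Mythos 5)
Your proposal is correct and follows essentially the same route as the paper: decompose $U_{\sigma_1}(\alpha)$ over the three directions $s\in\{\sigma_2,\sigma_3,\sigma_4\}$, double-count edges in $\calI_{\sigma_1,s}$, and apply the expander mixing lemma with $d_{k/2}=O_k(\sqrt{D})$ and second eigenvalue $O_k(\sqrt{d_{k/2}})$, using $|U|\le D^{-1}|M|$ to control the main term. The only cosmetic difference is that you absorb the main term into the left-hand side while the paper bounds it by the same $\sqrt{|A||B|}$ form as the error term; both work.
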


\begin{proof}
    For $s \in \{ \sigma_2, \sigma_3, \sigma_4 \}$ and integer $\alpha \le 1 + \log d_{k/2}$, let us define 
    \[
        U_{\sigma_1,s}(\alpha) \coloneqq \braces*{ u \in U_{\sigma_1} : 2^{\alpha-1} \le |N_{s}(u) \cap U| < 2^\alpha } \mper
    \]
    Note that 
    \[
        |U_{\sigma_1}(\alpha)| \le \sum_{s \in \{ \sigma_2, \sigma_3, \sigma_4 \}} |U_{\sigma_1,s}(\alpha)| \mcom
    \]
    so it suffices to bound each $|U_{\sigma_1,s}(\alpha)|$ separately.

    To do this, we count the number of edges between $U_{\sigma_1,s}(\alpha)$ and $U_{s}$ in $\calI_{\sigma_1,s}$ in two different ways.
    First, by definition each $u \in U_{\sigma_1,s}(\alpha)$ has at least $2^{\alpha-1}$ neighbors within $U_{s}$, so we have that 
    \[
        |E(U_{\sigma_1,s}(\alpha), U_{s})| \ge 2^{\alpha-1} \cdot |U_{\sigma_1, s}(\alpha)| \mper \numberthis \label{eqn:edge-bound1}
    \]
    Second, by the expander mixing lemma on the graph $\calI_{\sigma_1, s}$ and using that $X$ is $2^k$-expanding and that $d_{k/2}$ is an upper bound on the degree of $\calI_{\sigma_1,s}$,
    \begin{align*}
        E(U_{\sigma_1,s}(\alpha), U_{s}) 
        &\le \frac{d_{k/2} \cdot |U_{\sigma_1,s}(\alpha)| \cdot |U_{s}|}{|\Gamma|} + 2^k \cdot \sqrt{d_{k/2}} \cdot \sqrt{|U_{\sigma_1,s}(\alpha)| \cdot |U_{s}|} \\
        &\le \left( d_{k/2} \cdot k D^{-1} + 2^k \cdot \sqrt{d_{k/2}} \right) \cdot \sqrt{|U_{\sigma_1,s}(\alpha)| \cdot |U_{s}|} \\
        &\leq O_k(1) \cdot D^{1/4} \cdot \sqrt{|U_{\sigma_1,s}(\alpha)| \cdot |U_{s}|} \mcom \numberthis \label{eqn:edge-bound2} 
    \end{align*}
    where in the second line we use that $|U| \leq D^{-1} \cdot |\Gamma\times \calH_k|$ and in the last line we use that $d_{k/2} = O_k(1) \cdot \sqrt{D}$.
    Combining \Cref{eqn:edge-bound1,eqn:edge-bound2}, this gives that 
    \[
        2^{\alpha - 1} \cdot |U_{\sigma_1,s}(\alpha)| \le O_k(1) \cdot D^{1/4} \cdot \sqrt{|U_{\sigma_1,s}(\alpha)| \cdot |U_s|} \mcom
    \]
    which rearranges to give
    \[
        |U_{\sigma_1,s}(\alpha)| 
        \le O_k(1) \cdot \frac{D^{1/2}}{2^{2\alpha}} \cdot |U_{s}| 
        \le O_k(1) \cdot \frac{D^{1/2}}{2^{2\alpha}} \cdot |U| \mper
    \]
    Thus,
    \[
        |U_{\sigma_1}(\alpha)| \le \sum_{s \in \{ \sigma_2,\sigma_3,\sigma_4 \}} |U_{\sigma_1,s}(\alpha)| \le O_k(1) \cdot \frac{D^{1/2}}{2^{2\alpha}} \cdot |U| \mper \numberthis \label{eqn:upper_U_s1}
    \]
    Finally, we obtain the lemma statement by combining~\Cref{eqn:upper_U_s1} with the fact that $|U_{s_1}(\alpha)| \le |U|$.
\end{proof}

We are now ready to prove \Cref{lem:small-set-subcube-density}.

\begin{proof}[Proof of \Cref{lem:small-set-subcube-density}]
    We first prove $|F_{k}(U;\sigma)| \le O_k(1) \cdot D^{5/8} \cdot |U|$ for $\sigma = (\sigma_1,\sigma_2,\sigma_3,\sigma_4) \in \calH_k^4$ that sums up to $0$ over $\F_2^k$.

    For the subcube $C = C_{\sigma} = \sigma_1 \oplus \bigoplus_{i\in \Delta(\sigma)} \{0,1\} \cdot e_i$ (\Cref{eq:subcube-sigma}), we can write
    \begin{align*}
        |F_{C}(U;\sigma)|
        &= \sum_{u \in U_{\sigma_1}} |F_{C}(u;U;\sigma)| \\
        &= \sum_{\alpha = 1}^{1+\log d_{k/2}} \sum_{u \in U_{\sigma_1}(\alpha)} |F_{C}(u;U;\sigma)| \\
        &\le \sum_{\alpha = 1}^{1+\log d_{k/2}} |U_{\sigma_1}(\alpha)| \cdot 2^{3\alpha/2} \\
        &\le \sum_{\alpha = 1}^{1+\log d_{k/2}} O_k(1) \cdot \min \left\{ 1, \frac{D^{1/2}}{2^{2\alpha}} \right\} \cdot |U| \cdot 2^{3\alpha/2} \\
        &= O_k(1) \sum_{\alpha = 1}^{(\log D)/4}  2^{3\alpha/2} \cdot |U| + O_k(1) \sum_{\alpha = 1+(\log D)/4}^{1+\log d_{k/2}} \frac{D^{1/2}}{2^{\alpha/2}} \cdot |U| \\
        &\leq O_k(1) \cdot D^{3/8} \cdot |U| \mcom
    \end{align*}
    where the first inequality follows from \Cref{lem:F(u;U)} (since every $u \in U_{\sigma_1}(\alpha)$ satisfies $|N_s(u) \cap U| \leq 2^{\alpha}$ for $s\in \{\sigma_2,\sigma_3,\sigma_4\}$ by definition), and the second inequality follows from \Cref{lem:bound-Ussprime}.

    Next, by~\Cref{lem:cube-count}, each $f \in F_{C}(U; \sigma)$ can be extended to $f \in F_k(U; \sigma)$ in $\prod_{i \not\in \Delta(\sigma)} |A_i| \le O_k(1) \cdot D^{1/4}$ ways, so
    \begin{align*}
        |F_k(U;\sigma)| 
        \le |F_{C}(U;\sigma)| \cdot O_k(1) \cdot D^{1/4} 
        \le O_k(1) \cdot D^{5/8} \cdot |U| \mper
    \end{align*}
    Finally, by plugging in the above into \Cref{eq:final-bound}, we obtain the desired inequality:
    \begin{align*}
        |F_k(U; \ge 2\sqrt{k})| \le O_{k}(1) \cdot D^{5/8} \cdot |U| \mper &  \qedhere
    \end{align*}
\end{proof}

\section{Ramanujan cubical complexes}
\label{sec:cubical-construction}

% Defining these here instead of the macros
\newcommand{\I}{\mathbf{{i}}}
\newcommand{\J}{\mathbf{{j}}}
\newcommand{\K}{\mathbf{{k}}}
\newcommand{\Quat}{\calH(\Z)}

In this section, we give a proof of \Cref{thm:cubical-complexes}, which is essentially contained in \cite{RSV19}.
In particular, we describe the construction of expanding cubical complexes (\Cref{def:expanding-cubical-complex}) based on the LPS Ramanujan graphs \cite{LPS88}.
For our purposes, we only need basic properties of the generating sets of these Cayley graphs, while using the (highly non-trivial) fact that they are Ramanujan as a black box.

\subsection{LPS Ramanujan graphs}

In this section, we give a brief overview of the LPS Ramanujan graphs \cite{LPS88} (see also \cite{Lubotzky94}).

\parhead{Notation.}
For any $n \in \N$, let $r_4(n) \coloneqq |\{(a,b,c,d) \in \Z^4: a^2 + b^2 + c^2 + d^2 = n\}|$.

We start with a standard fact.
\begin{fact}[Jacobi's four-square theorem]
\label{fact:jacobi-sos}
    For any odd $n$, $r_4(n) = 8\sum_{m|n} m$.
    In particular, if $n = p_1 p_2 \cdots p_k$ for distinct odd primes $p_1,\dots,p_k$, then $r_4(n) = 8 \prod_{i=1}^k (p_i+1)$.
\end{fact}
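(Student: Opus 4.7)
The statement is Jacobi's classical four-square theorem together with its direct specialization for squarefree $n$. The plan is to sketch the quaternion-arithmetic proof of the general formula $r_4(n) = 8\sigma(n)$ (for odd $n$, where $\sigma(n) = \sum_{m \mid n} m$), and then read off the squarefree case; this approach is thematically aligned with the paper's subsequent development around $\Quat$.

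The setup realizes $r_4(n)$ as the count of norm-$n$ elements in the Lipschitz order $L = \Z + \Z\I + \Z\J + \Z\K$ inside Hamilton's rational quaternion algebra, with the crucial enlargement being the Hurwitz order $H = L + L\omega$ where $\omega = \frac{1}{2}(1+\I+\J+\K)$. This $H$ is maximal, it is left- and right-Euclidean under the reduced norm $N(a+b\I+c\J+d\K) = a^2+b^2+c^2+d^2$, its unit group $H^\times$ has order exactly $24$, and every right ideal is principal with generators unique up to right multiplication by a unit. I would proceed in three steps. First, count the right $H$-ideals of norm $n$: for odd $n$ this equals $\sigma(n)$, via the standard local-global calculation exploiting $H \otimes \Z_p \cong M_2(\Z_p)$ at odd primes $p$ and reducing to counting $\Z_p$-sublattices of $\Z_p^2$ of given index, so there are $24\,\sigma(n)$ elements of $H$ of norm $n$ in total. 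Second, extract the Lipschitz count: for odd $n$, each right ideal of norm $n$ contributes exactly $8$ Lipschitz associates out of its $24$ generators, yielding $r_4(n) = 8\sigma(n)$. The ``$8$ of $24$'' ratio is obtained by analyzing the right $H^\times$-action on the orbit modulo the index-$2$ additive subgroup $L \subset H$; the odd-norm hypothesis is what rules out degenerate configurations in this count.

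For the specialization to $n = p_1 \cdots p_k$ with distinct odd primes, divisors of $n$ correspond bijectively to subsets $S \subseteq [k]$ via $S \mapsto \prod_{i \in S} p_i$, so
\[
    \sigma(n) \;=\; \sum_{S \subseteq [k]} \prod_{i \in S} p_i \;=\; \prod_{i=1}^{k}(1+p_i),
\]
yielding $r_4(n) = 8\prod_{i=1}^k(p_i+1)$. The main technical obstacle is the ideal-counting step, which ultimately rests on Eichler's theorem on quaternion orders (or equivalently, Brandt-matrix computations). If one prefers to avoid this machinery, an analytic alternative proves Jacobi's formula by identifying $\theta(q)^4 = \sum_n r_4(n) q^n$ inside the two-dimensional space $M_2(\Gamma_0(4))$ of weight-$2$ modular forms and matching Fourier coefficients with the explicit Eisenstein basis.
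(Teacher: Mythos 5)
The paper does not prove this statement at all: it is cited as a classical black-box fact (Jacobi's four-square theorem, attributed implicitly to the standard literature alongside \cite{Dickson22}), and only the trivial specialization $\sigma(p_1\cdots p_k)=\prod_i(1+p_i)$ is implicitly used downstream in \Cref{lem:LPS-cubical-generating}. So there is no paper argument to compare against; what you have written is a sketch of the standard Hurwitz-order proof, and it is sound in outline. The two genuinely nontrivial ingredients are exactly the ones you isolate: (i) the count of right ideals of reduced norm $n$ in the Hurwitz order equals $\sum_{d\mid n,\ d\ \mathrm{odd}} d=\sigma(n)$ for odd $n$ (via $H\otimes\Z_p\cong M_2(\Z_p)$ at odd $p$ and the sublattice count $\sum_{d\mid n}d$, together with one-sided Euclideanness/principality of $H$), and (ii) the lemma that for odd norm exactly $8$ of the $24$ right associates of a Hurwitz quaternion are Lipschitz. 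Both are asserted rather than proved; for (ii) the clean argument is that $\alpha u\in L$ for a non-Lipschitz unit $u$ forces $a+b+c+d$ even, contradicting $N(\alpha)\equiv a+b+c+d\equiv 1\pmod 2$, which is slightly more direct than the coset analysis you gesture at. The squarefree specialization via multiplicativity of $\sigma$ is immediate and correct. Since the paper treats this as a known fact, your sketch (or the modular-forms alternative you mention) is a legitimate way to make the write-up self-contained, but at the level of detail given it remains a citation-backed outline rather than a complete proof.
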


Let us start with the definition of quaternions.
We will restrict our attention to integral quaternions (a.k.a.\ Lipschitz quaternions).

\begin{definition}[Integral quaternions]
\label{def:quaternions}
    Define $\Quat = \{a\,\id + b\I + c\J + d \K: a,b,c,d\in \Z\}$ where
    \begin{align*}
        \I =
        \begin{bmatrix}
        i & 0 \\
        0 & -i
        \end{bmatrix} \mcom
        \qquad
        \J =
        \begin{bmatrix}
        0 & 1 \\
        -1 & 0
        \end{bmatrix} \mcom
        \qquad
        \K =
        \begin{bmatrix}
        0 & i \\
        i & 0
        \end{bmatrix} \quad
        \in \C^{2\times2} \mper
    \end{align*}
    For $\alpha = a\id + b\I + c\J + d\K \in \Quat$, we define its norm $N(\alpha)$ as $\det(\alpha) = a^2+b^2+c^2+d^2$, and we define the (normalized) trace $\tr(\alpha) = a$.
\end{definition}

\begin{remark}
    It can be verified that $\I$, $\J$, $\K$ in \Cref{def:quaternions} satisfy the following relations:
    \[
        \I^2 = \J^2 = \K^2 = \I\J\K = -\id \mper
    \]
    The quaternions are traditionally defined according to these relations.
    \Cref{def:quaternions} is a \emph{matrix representation} of quaternions in $\C^{2\times2}$.
\end{remark}

Note that the norm is a multiplicative map: $N(\alpha \beta) = \det(\alpha 
\beta) = N(\alpha) N(\beta)$.
Thus, for integral quaternions, the group of units is
\begin{align*}
    \Quat^{\times} = \{\pm\id, \pm \I, \pm\J, \pm\K\} \mper
\end{align*}

We now formulate the ``unique factorization'' theorem for $\Quat$.
% , which follows from Theorem 2 and Lemma 1 of \cite[Chapter 5]{CS03}.
This is a key property that we will need later to construct the Ramanujan cubical complexes (see \Cref{sec:cubical-construction}).

% $\Quat$ is non-commutative and does not satisfy the unique factorization property.
% Fortunately, it satisfies a weak version of unique factorization \cite{Dickson22} (see also \cite{Pall40,CS03,Tso20} for expositions).\footnote{One can extend $\Quat$ to the \emph{Hurwitz} quaternions with more desirable properties. We refer readers to \cite{CS03,Tso20} for more thorough expositions.}

\begin{fact}[{Unique factorization~\cite[Theorem 8]{Dickson22}}]
\label{fact:unique-factorization}
    Let $\alpha \in \Quat$ such that $N(\alpha)$ is odd.\footnote{$N(\alpha)$ being odd is necessary because $2 = (1+\I)(1-\I) = (1+\J)(1-\J)$, which is not unique up to unit migration. One can extend $\Quat$ to the \emph{Hurwitz} quaternions to handle this case (see, e.g., \cite{Pall40,CS03}).}
    Let $N(\alpha) = p_1 p_2 \cdots p_k$ be the factorization of the norm into primes, arranged in an arbitrary but definite order.
    Then, there is a decomposition $\alpha = \alpha_1 \alpha_2 \cdots \alpha_k$ where $N(\alpha_i) = p_i$ for each $i\in[k]$.
    Moreover, the decomposition is unique up to ``unit migration'', where $\alpha_1 \alpha_2 \cdots \alpha_k$ and $(\alpha_1 u_1) (\ol{u}_1 \alpha_2 \ol{u}_2) \cdots (\ol{u}_{k-1} \alpha_k)$ for any $u_1,\dots, u_{k-1} \in \Quat^{\times}$ are considered the same decomposition.
    % , and this decomposition is unique up to association of the factors.
    % Let $\alpha = a\id + b\I + c\J + d\K \in \Quat$ with odd norm, and let $p\in \N$ be an odd integer such that $p|N(\alpha)$ and $\gcd(a,b,c,d,p) = 1$.\footnote{$N(\alpha)$ being odd is necessary because $2 = (1+\I)(1-\I) = (1+\J)(1-\J)$, which is not unqiue up to association. One can extend $\Quat$ to the \emph{Hurwitz} quaternions to handle this case (see, e.g., \cite{Pall40,CS03}).}
    % Then, there is a unique, up to left multiplication by units, right divisor of $\alpha$ of norm $p$.
    % That is, there is a unique set $B = \Quat^{\otimes} \cdot \beta \subseteq \Quat$ such that for each $\beta'\in B$, $N(\beta') = p$ and $\alpha = \gamma \beta'$ for some $\gamma \in \Quat$.
\end{fact}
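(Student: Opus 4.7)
The plan is to prove both existence and uniqueness by induction on $k$, reducing everything to a single key lemma: for any $\alpha \in \Quat$ and any odd prime $p$ dividing $N(\alpha)$, there exists a left divisor $\beta \in \Quat$ of $\alpha$ with $N(\beta) = p$, and $\beta$ is unique up to right-multiplication by a unit in $\Quat^{\times}$.

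To prove the key lemma, I would exploit the fact that the rational quaternion algebra splits at every odd prime: one can check directly that the defining relations $\I^2 = \J^2 = -\id$ can be realized inside $M_2(\F_p)$ (using a square root of $-1$ in $\F_p$, which exists after possibly passing to $\F_{p^2}$ but suffices to give an $\F_p$-algebra isomorphism $\Quat / p\Quat \cong M_2(\F_p)$). Under this identification, $p \mid N(\alpha)$ says that the image $\overline{\alpha} \in M_2(\F_p)$ is a singular matrix, so its left null space is a unique line in $\F_p^2$; this line picks out a unique proper left ideal $I \subseteq \Quat/p\Quat$ annihilating $\overline{\alpha}$ from the left, which pulls back to a unique left ideal $J \subseteq \Quat$ of reduced norm $p$. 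The generators of $J$ are precisely the norm-$p$ elements of $\Quat$ that left-divide $\alpha$. Jacobi's theorem (\Cref{fact:jacobi-sos}) supplies $r_4(p) = 8(p+1)$ quaternions of norm $p$, and dividing by the $|\Quat^{\times}| = 8$ units yields $p+1$ right-associate classes, matching the $p+1$ nonzero lines in $\F_p^2$; hence $\beta$ is unique up to a right-unit as claimed.

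Granting the lemma, the inductive steps are straightforward. For existence, choose a norm-$p_1$ left divisor $\alpha_1$ of $\alpha$ and set $\alpha' \coloneqq \alpha_1^{-1} \alpha = p_1^{-1} \overline{\alpha}_1 \alpha$, which lies in $\Quat$ since $\alpha \in \Quat \alpha_1$; by multiplicativity of the norm, $N(\alpha') = p_2 \cdots p_k$, and one recurses on $\alpha'$ with the remaining ordered sequence $p_2, \dots, p_k$. For uniqueness, given $\alpha_1 \cdots \alpha_k = \alpha_1' \cdots \alpha_k'$ with matching norm sequences, both $\alpha_1$ and $\alpha_1'$ are norm-$p_1$ left divisors of $\alpha$, so $\alpha_1' = \alpha_1 u_1$ for some $u_1 \in \Quat^{\times}$. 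Substituting gives $\alpha_2 \cdots \alpha_k = (u_1 \alpha_2') \alpha_3' \cdots \alpha_k'$, which is a length-$(k-1)$ factorization with the same norms; the inductive hypothesis then produces intermediate units $u_2, \dots, u_{k-1}$, and the resulting ambiguity at each step is exactly the unit migration $(\alpha_i u_i)(\overline{u}_i \alpha_{i+1})$ described in the statement.

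The main obstacle, and the reason the odd hypothesis is essential rather than cosmetic, is the failure of the splitting $\Quat/p\Quat \cong M_2(\F_p)$ at $p = 2$: the Hamilton algebra ramifies at $2$, and correspondingly $\Quat$ is not $2$-locally maximal. Concretely, $2 = (1+\I)(1-\I) = (1+\J)(1-\J)$ exhibits two factorizations that cannot be reconciled by any element of $\Quat^{\times}$; bridging them requires the Hurwitz unit $\tfrac{1}{2}(1+\I+\J+\K)$, which is available only after enlarging to the Hurwitz order. Thus the odd-norm hypothesis is precisely what makes the local analysis at each prime work, and is the only place in the argument where primality of $p$ (as opposed to being a unit or a prime power) is used.
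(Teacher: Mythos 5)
The paper does not prove this statement at all: it is imported as a black-box citation to Dickson's Theorem 8, so any proof you supply is by construction a different route from the paper's. Your sketch is in fact the standard argument for quaternionic unique factorization --- split the Hamilton algebra at each odd prime via $\Quat/p\Quat \cong M_2(\F_p)$, translate ``$\beta$ is a norm-$p$ divisor of $\alpha$'' into containment of rank-one reductions, and match the $p+1$ right-associate classes of norm-$p$ Lipschitz quaternions (from $r_4(p)=8(p+1)$ and $|\Quat^\times|=8$) with the $p+1$ lines of $\F_p^2$ --- and the induction deducing existence and uniqueness of the full ordered factorization from the one-prime lemma is sound, including the observation that $\overline{u}=u^{-1}$ for units so that the ambiguity propagates exactly as ``unit migration.'' Your explanation of why oddness is essential (ramification at $2$, reconcilable only via the Hurwitz unit $\tfrac12(1+\I+\J+\K)$) is also correct and is precisely the content of the paper's footnote.

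Three caveats. First, your justification of the splitting is off as written: passing to $\F_{p^2}$ only splits the algebra over $\F_{p^2}$; to get an $\F_p$-isomorphism you need a solution of $x^2+y^2+1=0$ in $\F_p$ (which exists for every odd $p$ by a pigeonhole count over the two sets $\{x^2\}$ and $\{-1-y^2\}$), or an appeal to Wedderburn. Second, the ``unique line'' step silently assumes $\overline{\alpha}\ne 0$ in $M_2(\F_p)$, i.e.\ $p\nmid\alpha$. This is automatic when $N(\alpha)$ is squarefree --- the only case the paper ever uses, via \Cref{lem:LPS-cubical-generating} --- but the Fact as stated permits repeated primes, where both the statement and your proof genuinely fail without a primitivity hypothesis: $\alpha=p$ has $N(\alpha)=p\cdot p$ and $p+1$ pairwise non-migratable factorizations $\beta\overline{\beta}$. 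Dickson's theorem carries a primitivity assumption for exactly this reason, and it would be worth recording that the application only ever invokes the squarefree case. Third, minor left/right bookkeeping: a left divisor $\beta$ of $\alpha$ means $\alpha\in\beta\Quat$, a \emph{right} ideal, so the relevant invariant of $\overline{\alpha}$ is its column space rather than its left null space, and ``$\alpha\in\Quat\alpha_1$'' in the existence step should read $\alpha\in\alpha_1\Quat$; these are cosmetic and do not affect the argument.
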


Note that factorization can only be unique up to unit migration simply because $\alpha \beta = (\alpha \ol{u}) (u\beta)$ for any unit $u\in \Quat^{\times}$.\footnote{This is similar for integers $\Z$ where factorization is unique up to the association $a \sim -a$.}

Next, we define the following, which will later give us the generators of the LPS graphs.

\begin{definition} \label{def:generators}
    For $n\in \N$, define
    \begin{align*}
        A(n) \coloneqq \{\alpha \in \Quat: N(\alpha) = n,\ \tr(\alpha) \text{ is odd}\}\ /\ \{\id, -\id\} \mper
    \end{align*}
\end{definition}

It is convenient to view this quotient as the set of odd-trace quaternions where $\alpha$ and $-\alpha$ are considered to be identical.

The following fact is a simple consequence of Jacobi's four-square theorem (\Cref{fact:jacobi-sos}). We will prove a generalization later (\Cref{lem:LPS-cubical-generating}).

\begin{fact}
    For a prime $p$ congruent to $1$ modulo $4$, $|A(p)| = p+1$.
\end{fact}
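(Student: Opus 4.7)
The plan is to count integer solutions to $a^2+b^2+c^2+d^2=p$ with $a$ odd, then quotient by the $\pm\id$ identification.

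First I would invoke Jacobi's four-square theorem (\Cref{fact:jacobi-sos}): since $p$ is an odd prime, its only positive divisors are $1$ and $p$, so $r_4(p) = 8(1+p) = 8(p+1)$. This counts the number of quaternions $\alpha = a\,\id + b\I + c\J + d\K \in \Quat$ with $N(\alpha) = p$.

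Next I would perform a parity analysis to show that in each such solution, exactly one of $a,b,c,d$ is odd. Reducing mod $4$, every square is $\equiv 0$ or $1$, so $a^2+b^2+c^2+d^2 \equiv (\#\text{odd entries}) \pmod 4$. Because $p \equiv 1 \pmod 4$ by hypothesis, the number of odd entries among $a,b,c,d$ must itself be $\equiv 1 \pmod 4$, and since it can be at most $4$, it is exactly $1$.

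From here the conclusion follows by symmetry. The four coordinates $a,b,c,d$ play symmetric roles in the equation $a^2+b^2+c^2+d^2=p$, so the set of solutions with $\tr(\alpha)=a$ the unique odd coordinate has size $r_4(p)/4 = 2(p+1)$. These are precisely the norm-$p$ quaternions with odd trace. Finally, since $N(\alpha)=p\neq 0$ forces $\alpha\neq -\alpha$, the equivalence relation $\alpha \sim -\alpha$ in the definition of $A(p)$ has trivial stabilizers, so passing to the quotient divides the count by $2$, giving $|A(p)| = p+1$.

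This is a short computation with no real obstacles; the only place to be careful is the parity step, where the hypothesis $p \equiv 1 \pmod 4$ (rather than merely $p$ odd) is exactly what forces the number of odd coordinates to be $1$ rather than $3$, which in turn is what makes the symmetry argument give the clean count $r_4(p)/4$ for solutions with $a$ odd.
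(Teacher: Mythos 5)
Your proof is correct and follows essentially the same route as the paper, which derives this fact as the $k=1$ case of \Cref{lem:LPS-cubical-generating}: apply Jacobi's four-square theorem, use the mod-$4$ parity analysis to show exactly one coordinate is odd, divide by $4$ for the position of the odd coordinate and by $2$ for the $\pm\id$ identification. No gaps.
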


\parhead{LPS Ramanujan graphs.}
We now describe the LPS Ramanujan graphs $X(p;q)$, where
\begin{itemize}
    \item $p < q$ are primes congruent to $1$ modulo $4$,
    \item $p$ is a quadratic residue modulo $q$ --- that is, there exists $x\in \Z$ such that $p \equiv x^2 \pmod{q}$.\footnote{\cite{LPS88} also defined Cayley graphs when $p$ is \emph{not} a quadratic residue. In this case, the graphs are over $\PGL(2,\F_q)$ and they are bipartite. We will not consider this case.}
\end{itemize}
The graph is a Cayley graph over the group $\PSL(2, \F_q)$ with $p+1$ generators defined by $A(p)$ (\Cref{def:generators}).
Here, $\PSL(2, \F_q)$ is the \emph{projective special linear group}: it is a subgroup of $2\times2$ matrices in $\F_q$ of determinant $1$ modulo scalar multiplication, i.e., $\wt{\alpha}$ belongs to the equivalence class $[c \wt{\alpha}]$ if $\det(c \wt{\alpha}) = c^2 \det(\wt{\alpha}) = 1$ (in $\F_q$).
It is easy to check that $|\PSL(2,\F_q)| = q(q^2-1)/2$.

We first need to map a quaternion $\alpha\in A(p)$ to an element in $\PSL(2, \F_q)$.
To do so, we need an element $j \in \F_q$ such that $j^2 = -1$ (thus behaving like the imaginary unit $i$).
This requires $q \equiv 1 \pmod{4}$, in which case it is well known (by Euler's criterion) that $-1$ is a quadratic residue mod $q$, i.e., there exists $y\in \Z$ such that $y^2 \equiv -1 \pmod{q}$.

Moreover, each $\alpha \in A(p)$ has $\det(\alpha) = p$.
We need that there exists $c \in \Z$ such that $\det(c\alpha) = c^2 p \equiv 1 \pmod{q}$ to get an element in $\PSL(2,\F_q)$.
Thus, choosing $p$ such that $p \equiv x^2 \pmod{q}$ for some $x \in \Z$, since there always exists $c\in \Z$ such that $cx \equiv 1 \pmod{q}$, we have that $c^2 p \equiv c^2 x^2 \equiv 1 \pmod{q}$.

This gives a natural map $\alpha \in A(p)$ to $\wt{\alpha} \in \PSL(2,\F_q)$ by simply replacing $i$ with $j\in \F_q$ with $j^2 = -1$.
 We denote
 \begin{align*}
     \wt{A}(p) \coloneqq \{\wt{\alpha}: \alpha\in A(p)\} \mper
 \end{align*}
 Note that $|\wt{A}(p)| = |A(p)| = p+1$, since no distinct $\alpha,\beta\in A(p)$ are scalar multiples of each other.

 The following is the main theorem of \cite{LPS88} whose proof is out of the scope of this paper.

\begin{theorem}[\cite{LPS88}] \label{thm:ramanujan}
    Suppose $p < q$ are primes congruent to $1$ modulo $4$, and $p$ is a quadratic residue modulo $q$.
    Let $\Gamma = \PSL(2,\F_q)$.
    Then, the Cayley graph $\Cay(\Gamma; \wt{A}(p))$ is a $(p+1)$-regular graph on $q(q^2-1)/2$ vertices with all non-trivial eigenvalues at most $2\sqrt{p}$.
\end{theorem}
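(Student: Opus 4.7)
The plan is to verify the combinatorial data and then reduce the spectral bound to the Ramanujan--Petersson conjecture for weight-$2$ holomorphic modular forms, which I would take as the only genuinely non-elementary input.

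First I would dispatch the easy claims. The degree $|\wt{A}(p)| = p+1$ follows from Jacobi's four-square theorem (\Cref{fact:jacobi-sos}): $r_4(p) = 8(p+1)$, and since $p$ is odd every representation $p = a^2+b^2+c^2+d^2$ has exactly one odd coordinate, so collecting odd-trace integral quaternions and quotienting by $\{\pm \id\}$ yields $p+1$ classes. Symmetry of $\wt{A}(p)$ under inversion follows from $\alpha\bar\alpha = N(\alpha) = p$ together with our normalization $c^2 p \equiv 1 \pmod q$: the reduction of $\bar\alpha$ is the inverse of the reduction of $\alpha$ in $\PSL_2(\F_q)$. Finally, $|\PSL_2(\F_q)| = q(q^2-1)/2$ is standard.

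The heart of the theorem is the bound $|\lambda| \le 2\sqrt p$ on the non-trivial eigenvalues. I would follow the classical Bruhat--Tits tree route in three steps. \emph{Step A (tree quotient):} Let $\calT_{p+1}$ be the $(p+1)$-regular Bruhat--Tits tree of $\PGL_2(\Q_p)$. Using strong approximation for the Hamilton quaternion algebra ramified at $\infty$, one identifies the Cayley graph $X(p;q)$ with the finite quotient $\Lambda(q)\backslash \calT_{p+1}$, where $\Lambda(q)$ is a congruence subgroup of the norm-one integral quaternions acting freely on $\calT_{p+1}$; under this identification the generators $\wt{A}(p)$ act as the tree-neighbor maps, and the unique factorization of quaternions (\Cref{fact:unique-factorization}) is exactly what guarantees nonbacktracking walks in $X(p;q)$ correspond to distinct group elements. \emph{Step B (automorphic translation):} The adjacency operator on $L^2(\Lambda(q)\backslash\calT_{p+1})$ is the Hecke operator $T_p$ acting on $K$-fixed vectors in an automorphic representation of $\PGL_2$ over $\Q$; the Jacquet--Langlands correspondence transfers the cuspidal spectrum on the quaternion side to the space of classical weight-$2$ newforms of level a power of $q$. \emph{Step C (Ramanujan--Petersson):} Eichler--Deligne's bound $|a_p(f)| \le 2\sqrt p$ on the $p$-th Hecke eigenvalue of every such newform then bounds every non-trivial eigenvalue of $\wt A(p)$; the trivial eigenvalue $p+1$ comes from constant functions.

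The main obstacle is clearly Step C. The Eichler--Deligne bound ultimately rests on the Weil conjectures for curves and is firmly outside any self-contained elementary argument. Steps A and B are more routine once the quaternion-algebra framework is in place, but even fleshing them out would reproduce a substantial chapter of~\cite{Lubotzky94}; this is why the excerpt sensibly uses the entire theorem as a black box for the downstream cubical-complex construction.
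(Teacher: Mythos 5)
The paper gives no proof of this theorem: it is imported verbatim from \cite{LPS88}, with the explicit remark that the proof ``is out of the scope of this paper,'' and only the elementary facts about $A(p)$ (its size via Jacobi's theorem, closure under inversion after the scaling $c^2p\equiv 1\pmod q$) are actually verified elsewhere in the text (\Cref{lem:LPS-cubical-generating}). Your outline of the combinatorial part matches what the paper does prove, and your three-step reduction --- identification of the Cayley graph with a quotient $\Lambda(q)\backslash \mathcal{T}_{p+1}$ of the Bruhat--Tits tree via strong approximation for the Hamilton quaternions, translation of the adjacency operator into the Hecke operator $T_p$ and transfer by Jacquet--Langlands to weight-$2$ newforms (of level $16q^2$ in the LPS normalization), and the Eichler--Deligne bound $|a_p|\le 2\sqrt{p}$ --- is a faithful skeleton of the actual proof in \cite{LPS88,Lubotzky94}, with the deep input correctly isolated in the last step. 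So there is no discrepancy to report; your proposal is a (necessarily incomplete, as you acknowledge) sketch of the external proof that both you and the paper treat as a black box.
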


\subsection{Construction of Ramanujan Cayley cubical complexes}

The following is an important lemma that allows us to construct cubical complexes.
The proof is straightforward given \Cref{fact:jacobi-sos,fact:unique-factorization}.

\begin{lemma} \label{lem:LPS-cubical-generating}
    For any $k\in \N$ and distinct primes $p_1, p_2,\dots, p_k$ congruent to $1$ modulo $4$,
    \begin{enumerate}[(1)]
        \item $|A(p_1p_2 \cdots p_k)| = \prod_{i=1}^k (p_i+1)$.
        \label{item:size-of-A}
        \item $A(p_1) \cdot A(p_2) \cdots A(p_k) = A(p_1p_2 \cdots p_k)$.
        \label{item:set-commute}
    \end{enumerate}
\end{lemma}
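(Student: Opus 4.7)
The plan is to deduce both parts from a single mod-$2$ characterization of $A(n)$ for norms $n \equiv 1 \pmod 4$. Since each $p_i \equiv 1 \pmod 4$, the norm $N \coloneqq p_1\cdots p_k$ satisfies $N \equiv 1 \pmod 4$, so any representation $a^2 + b^2 + c^2 + d^2 = N$ must have \emph{exactly one} of $a,b,c,d$ odd (using that odd squares are $\equiv 1 \pmod 4$ and even squares are $\equiv 0 \pmod 4$). I would first record this as a lemma: a quaternion $\alpha = a + b\I + c\J + d\K \in \Quat$ with $N(\alpha) = N$ has odd trace if and only if $\alpha \equiv \id \pmod 2$, i.e., $b, c, d$ are all even.

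For part (1), apply Jacobi's four-square theorem (\Cref{fact:jacobi-sos}) to get $r_4(N) = 8\prod_i (p_i+1)$. The equation $a^2+b^2+c^2+d^2 = N$ is fully symmetric in its four coordinates, and exactly one coordinate is odd in each solution, so by symmetry the proportion of solutions with $a$ odd equals $1/4$. Thus there are $2\prod_i (p_i+1)$ odd-trace quaternions of norm $N$, and quotienting by $\{\pm\id\}$ gives $|A(N)| = \prod_i (p_i+1)$.

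For part (2), the inclusion $A(p_1)\cdots A(p_k) \subseteq A(N)$ is immediate from the mod-$2$ lemma: each factor $\alpha_i \equiv \id \pmod 2$, so since reduction modulo $2$ is a ring homomorphism their product is $\equiv \id \pmod 2$; combined with multiplicativity of the norm, the product lies in $A(N)$. For the reverse inclusion, I would invoke the unique factorization theorem (\Cref{fact:unique-factorization}) to decompose any $\alpha \in A(N)$ as $\alpha = \beta_1\cdots \beta_k$ with $N(\beta_i) = p_i$. Each $\beta_i$ has exactly one odd coordinate which may not be the real one, so I would iteratively correct this via unit migration: for $i = 1, \ldots, k-1$, pick a unit $v_i \in \Quat^{\times}$ whose right-multiplication rotates the odd coordinate of $\beta_i$ into the real slot (e.g.\ $\beta_i \cdot \I$ swaps $b$ into the real coordinate up to sign, and similarly for $\J, \K$), and replace the pair $(\beta_i, \beta_{i+1})$ by $(\beta_i v_i,\, v_i^{-1}\beta_{i+1})$, which preserves the product. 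After these $k-1$ corrections, $\beta_1,\dots,\beta_{k-1}$ each lie in the corresponding $A(p_i)$; the final factor $\beta_k$ is automatically odd-trace because $\alpha \equiv \id \pmod 2$ and $\beta_1\cdots \beta_{k-1} \equiv \id \pmod 2$ force $\beta_k \equiv \id \pmod 2$.

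The main subtlety I anticipate lies in the unit-migration step: one must check (i) that right-multiplication by the units $\{\pm\id,\pm\I, \pm\J, \pm\K\}$ really does act on $(a,b,c,d)$ as signed permutations capable of moving any chosen coordinate into the real slot, and (ii) that the telescoping $(\beta_1 v_1)(v_1^{-1}\beta_2 v_2)\cdots(v_{k-1}^{-1}\beta_k)$ collapses back to $\alpha$. Both are routine after computing the $(a,b,c,d)$-action of $\I,\J,\K$ once, but the bookkeeping is easy to bungle, particularly because \Cref{fact:unique-factorization} states unit migration in a form that uses conjugates and needs to be specialized to the one-sided form above.
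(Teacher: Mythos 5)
Your proposal is correct and follows essentially the same route as the paper: Jacobi's four-square theorem plus the mod-$4$ parity observation for part (1), and multiplicativity of the norm together with a mod-$2$ trace/parity argument for the forward inclusion and unique factorization with unit migration for the reverse inclusion in part (2). The paper compresses the reverse inclusion to ``follows directly from unique factorization,'' so your explicit unit-migration normalization (including the check that the last factor automatically has odd trace) is just a fleshed-out version of the same step, and the details you flag as subtleties do check out.
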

\begin{proof}
    First, note that any number $x$ has $x^2 \equiv 1 \pmod{4}$ if $x$ is odd, and $0$ otherwise.
    Thus, $p \equiv 1 \pmod{4}$ implies that for $a_0^2 + a_1^2 + a_2^2 + a_3^2 = p$, the set $a_0,a_1,a_2,a_3$ must have exactly one odd and three even integers.
    Note also that $p_i \equiv 1 \pmod{4}$ implies that $p_1 p_2 \cdots p_k \equiv 1 \pmod{4}$.

    With a slight abuse of notation, we will view an element $\alpha$ of $A(n)$ as a quaternion even though it is technically a coset $\{\alpha, -\alpha\}$, since $N(\alpha) = N(-\alpha)$ and $\tr(\alpha), \tr(-\alpha)$ have the same parity.

    For \ref{item:size-of-A}, let $n = p_1 p_2 \cdots p_k$.
    By Jacobi's four-square theorem (\Cref{fact:jacobi-sos}), $r_4(n) = 8 \prod_{i=1}^k (p_i+1)$.
    Since $A(n)$ has the restriction that $\tr(\alpha)$ is odd, each element in $A(n)$ gives rise to $8$ distinct $4$-tuples of integers whose squares sum up to $n$ (by specifying the position of the odd integer and its sign).
    This shows that $|A(n)| = \frac{1}{8} r_4(n) = \prod_{i=1}^k (p_i+1)$.

    For \ref{item:set-commute}, we first show that for any $n_1 \neq n_2$ congruent to $1$ modulo $4$, we have $A(n_1) \cdot A(n_2) \subseteq A(n_1n_2)$.
    This implies that $A(p_1) \cdot A(p_2) \cdots A(p_k) \subseteq A(p_1 p_2 \cdots p_k)$ as all $p_i \equiv 1 \pmod{4}$.
    For any $\alpha = a_0 \id + a_1 \I + a_2 \J + a_3 \K \in A(n_1)$ and $\beta = b_0 \id + b_1 \I + b_2 \J + b_3 \K \in A(n_2)$, we have that $N(\alpha \beta) = N(\alpha) N(\beta) = n_1 n_2$.
    Moreover, we know that $a_0, b_0$ are odd and the rest are even, thus $\tr(\alpha\beta) = a_0 b_0 - a_1 b_1 - a_2 b_2 - a_3 b_3$ is odd.
    This implies that $\alpha\beta \in A(n_1 n_2)$.

    On the other hand, $A(p_1 p_2 \cdots p_k) \subseteq A(p_1) \cdot A(p_2) \cdots A(p_k)$ follows directly from unique factorization (\Cref{fact:unique-factorization}).
    % Next, we show that if $p$ is a prime and $p,n$ are coprime and congruent to $1$ modulo $4$, then $A(n p) \subseteq A(n) \cdot A(p)$.
    % This, combined with the above, proves \ref{item:set-commute}.
    % For any $\gamma = c_0 \id + c_1\I + c_2\J + c_3\K \in A(np)$, since $n$ and $p$ are coprime and $c_0^2 + c_1^2 + c_2^2 + c_3^2 = np$, we must have $\gcd(c_0, c_1,c_2,c_3, p) = 1$.
    % By \Cref{fact:unique-factorization}, there exists $\beta = b_0\id + b_1\I + b_2\J + b_3\K \in \Quat$ with $N(\beta) = p$ such that $\gamma = \alpha \beta$ for some $\alpha \in \Quat$, and $\beta$ is unique up to multiplication by the units $\{\pm \id, \pm\I, \pm\J, \pm\K\}$.
    % Since $p \equiv 1 \pmod{4}$, the set $\{b_0,b_1,b_2,b_3\}$ has exactly one odd integer, thus there is a unique $\beta \in A(p)$ (more specifically, $\{\pm\beta\}$) with $b_0$ odd such that $\gamma = \alpha\beta$ for some $\alpha = a_0\id + a_1\I + a_2\J + a_3\K\in \Quat$.
    % The norm of $\alpha$ must be $n$ since $N(\alpha)N(\beta) = np$.
    % Moreover, $\tr(\alpha \beta) = a_0 b_0 - a_1 b_1 - a_2 b_2 - a_3 b_3$ is odd implies that $a_0$ is odd, so $\alpha\in A(n)$.
\end{proof}

The next lemma follows almost immediately from \Cref{thm:ramanujan,lem:LPS-cubical-generating}.

\begin{lemma}
    Let $p_1, p_2, \dots, p_k$ and $q$ be distinct primes congruent to $1$ modulo $4$, and suppose each $p_i$ is a quadratic residue modulo $q$.
    Let $\Gamma = \PSL(2,\F_q)$.
    Consider the bipartite graph $G$ defined on $\Gamma \times \zo$ where $(g,0)$ and $(h, 1)$ are connected if and only if $g^{-1} h \in \wt{A}(p_1 p_2 \cdots p_k)$.
    Then, $G$ has degree $d = \prod_{i=1}^k |\wt{A}(p_i)| = \prod_{i=1}^k (p_i+1)$ and second eigenvalue at most $2^k \sqrt{d}$.
\end{lemma}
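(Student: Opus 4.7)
The plan is to identify the biadjacency matrix $B$ of $G$, viewed as a $\Gamma\times\Gamma$ matrix with $B[g,h] = \mathbf{1}[g^{-1}h \in \wt{A}(p_1\cdots p_k)]$, as the adjacency matrix $M$ of the Cayley graph $\Cay(\Gamma; \wt{A}(p_1\cdots p_k))$. The degree assertion then reduces to $|\wt{A}(p_1\cdots p_k)| = \prod_i(p_i+1)$, which follows from item (1) of \Cref{lem:LPS-cubical-generating} together with the injectivity of the reduction $\alpha \mapsto \wt{\alpha}$ on $A(p_1\cdots p_k)$ (for $q$ sufficiently large). The real content is the spectral bound, for which the second eigenvalue of the bipartite graph equals the second singular value of $B = M$.

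The central step is to factor $M = M_1 M_2 \cdots M_k$, where $M_i$ is the adjacency matrix of $\Cay(\Gamma; \wt{A}(p_i))$. Item (2) of \Cref{lem:LPS-cubical-generating} gives the set equality $\wt{A}(p_1\cdots p_k) = \wt{A}(p_1)\cdots \wt{A}(p_k)$, and comparing cardinalities shows that each element of the left-hand side admits a \emph{unique} factorization $a_1 a_2 \cdots a_k$. In convolutional language, this is the identity $\mathbf{1}_{\wt{A}(p_1\cdots p_k)} = \mathbf{1}_{\wt{A}(p_1)} * \cdots * \mathbf{1}_{\wt{A}(p_k)}$ on $\Gamma$, which translates directly to the matrix identity $M = M_1 \cdots M_k$ since convolution of indicators by right-multiplication corresponds to matrix multiplication. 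The same cubical-generating property, applied to each pair $i \ne j$, yields $\wt{A}(p_i)\wt{A}(p_j) = \wt{A}(p_i p_j) = \wt{A}(p_j)\wt{A}(p_i)$, which by the same convolutional identity forces $M_i M_j = M_j M_i$. Each $M_i$ is also real symmetric because $\wt{A}(p_i)$ is closed under inverses.

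Consequently, the matrices $M_1,\dots,M_k$ admit a common orthonormal eigenbasis $\{v_\ell\}$ of $\R^{|\Gamma|}$ with $M_i v_\ell = \mu_{i,\ell} v_\ell$, and the eigenvalues of $M$ are the products $\prod_i \mu_{i,\ell}$. The all-ones vector $v_0 = \mathbf{1}/\sqrt{|\Gamma|}$ is a common eigenvector with $\mu_{i,0} = p_i+1$, producing the trivial eigenvalue $d$. For any other common eigenvector $v_\ell$, the LPS theorem (\Cref{thm:ramanujan}) bounds every non-trivial eigenvalue of $M_i$ by $2\sqrt{p_i}$ in absolute value, and since $\Cay(\Gamma; \wt{A}(p_i))$ is connected with one-dimensional Perron eigenspace spanned by $v_0$, any $v_\ell \ne v_0$ must satisfy $|\mu_{i,\ell}| \leq 2\sqrt{p_i}$ for \emph{every} $i$ simultaneously. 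Multiplying,
\[
    \Bigl|\prod_i \mu_{i,\ell}\Bigr| \;\leq\; \prod_i 2\sqrt{p_i} \;=\; 2^k\sqrt{\textstyle\prod_i p_i} \;\leq\; 2^k \sqrt{d},
\]
which is the desired bound on the second singular value of $B$, and hence on the second eigenvalue of $G$.

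The main (mild) subtlety is the simultaneous application of the Ramanujan bound across all $k$ factors: we need every common eigenvector other than $v_0$ to be orthogonal to the Perron eigenvector of \emph{each} $M_i$, which is precisely what the one-dimensionality of the LPS Perron eigenspaces provides. Everything else is a direct translation of the cubical-generating-set axioms into matrix/convolution language.
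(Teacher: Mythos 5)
Your proof is correct and rests on the same core idea as the paper's: the biadjacency matrix factors as $M = M_1 M_2 \cdots M_k$ with $M_i$ the adjacency matrix of $\Cay(\Gamma;\wt{A}(p_i))$ (via \Cref{lem:LPS-cubical-generating} and the cardinality count), and the bound $2^k\sqrt{d}$ comes from multiplying the Ramanujan bounds $2\sqrt{p_i}$ from \Cref{thm:ramanujan}. The only divergence is the finishing step: the paper observes that each $M_i$ fixes the all-ones vector and hence preserves $\mathbf{1}^\perp$, and applies submultiplicativity of the spectral norm on that subspace, whereas you invoke commutativity of the $M_i$ (via $\wt{A}(p_i)\wt{A}(p_j)=\wt{A}(p_j)\wt{A}(p_i)$) to obtain a common orthonormal eigenbasis and multiply eigenvalues pointwise. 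Both are sound; your route needs the extra commutativity input plus the connectivity/one-dimensional-Perron observation to rule out a trivial eigenvalue hiding off the all-ones vector (which you handle correctly), while the paper's norm-submultiplicativity argument needs neither and would go through even if the $M_i$ did not commute.
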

\begin{proof}
    By \Cref{lem:LPS-cubical-generating}, we have that $A(p_1) \cdot A(p_2) \cdots A(p_k) = A(p_1 p_2 \cdots p_k)$ and that $|A(p_1p_2 \cdots p_k)| = \prod_{i=1}^k (p_i+1)$.
    Thus, the degree $d = \prod_{i=1}^k (p_i+1)$.
    The adjacency matrix of $G$ is the (bipartite form of) product of adjacency matrices of $\Cay(\Gamma; \wt{A}(p_i))$.
    The trivial eigenvector is the all-ones vector for all these graphs, and thus, by submultiplicativity of the spectral norm, the second eigenvalue of $G$ is at most the product of the second eigenvalues of $\Cay(\Gamma; \wt{A}(p_i))$, which is $\prod_{i=1}^k (2\sqrt{p_i}) \leq 2^k \sqrt{d}$ by \Cref{thm:ramanujan}.
\end{proof}

\parhead{Infinite family of cubical complexes.}
For any distinct primes $p_1,p_2,\dots,p_k$, we need to show that there are infinitely many desirable primes $q$: congruent to $1$ modulo $4$ and that each $p_i$ is a quadratic residue modulo $q$.
This is standard and follows directly from the law of quadratic reciprocity and the Dirichlet prime number theorem.

\begin{lemma}
    Let $p_1, p_2,\dots, p_k$ be distinct primes congruent to $1$ modulo $4$.
    There are infinitely many primes $q$ such that $q\equiv 1 \pmod{4}$ and that each $p_i$ is a quadratic residue modulo $q$.
\end{lemma}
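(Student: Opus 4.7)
The plan is to reduce the two required conditions on $q$ to a single arithmetic progression condition, and then invoke Dirichlet's theorem on primes in arithmetic progressions. The central tool is the law of quadratic reciprocity.

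First, I would observe that the hypothesis $p_i \equiv 1 \pmod{4}$ makes the reciprocity symbol symmetric: for any odd prime $q \neq p_i$, since $\tfrac{p_i - 1}{2}$ is even, quadratic reciprocity gives
\[
    \left(\tfrac{p_i}{q}\right) = \left(\tfrac{q}{p_i}\right).
\]
So ``$p_i$ is a quadratic residue modulo $q$'' is equivalent to ``$q$ is a quadratic residue modulo $p_i$''. In particular, the former condition depends only on the residue class of $q$ modulo $p_i$.

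Second, I would make a convenient strengthening: instead of asking $q$ to land in any of the $(p_i - 1)/2$ nonzero QR classes modulo $p_i$, I would require $q \equiv 1 \pmod{p_i}$ for each $i$, since $1$ is trivially a square. Together with the condition $q \equiv 1 \pmod{4}$, the Chinese Remainder Theorem consolidates all of the requirements into a single congruence
\[
    q \equiv 1 \pmod{4 p_1 p_2 \cdots p_k}.
\]

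Third, I would invoke Dirichlet's theorem on primes in arithmetic progressions. Since $\gcd(1, \, 4 p_1 p_2 \cdots p_k) = 1$, there are infinitely many primes $q$ in the progression $1 + 4 p_1 p_2 \cdots p_k \cdot \bbN$. Any such $q$ satisfies $q \equiv 1 \pmod{4}$ and (by the reduction in step one) has each $p_i$ as a quadratic residue, completing the proof.

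There is no genuine obstacle here; the statement is a textbook combination of quadratic reciprocity and Dirichlet's theorem. The only choice with any content is replacing the weaker ``$q$ is \emph{some} QR mod $p_i$'' by the stronger ``$q \equiv 1 \pmod{p_i}$'', which makes the CRT step a one-liner rather than a union over $2^k$ arithmetic progressions (each of which would work equally well via Dirichlet, but less cleanly).
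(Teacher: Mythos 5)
Your proposal is correct and is essentially identical to the paper's proof: both take $q \equiv 1 \pmod{4p_1\cdots p_k}$ via Dirichlet's theorem and then use quadratic reciprocity (with the symmetry from $p_i \equiv 1 \pmod 4$) to transfer the residue condition. No further comment is needed.
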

\begin{proof}
    Let $n = p_1 p_2 \cdots p_k$, and consider the arithmetic progression $\{1+ 4n \ell\}_{\ell \in \N}$.
    The Dirichlet prime number theorem states that this sequence contains infinitely many prime numbers (since $1$ and $4n$ are coprime).
    For any such prime $q$, we have $q\equiv 1\pmod{4}$ and $q \equiv 1 \pmod{p_i}$ for each $i$, which also means that $q$ is a quadratic residue modulo $p_i$.
    Then, quadratic reciprocity implies that each $p_i$ is a quadratic residue modulo $q$.
\end{proof}

We also need to argue that there exist such primes that are all within a constant factor apart.
This follows from standard facts about the density of primes in arithmetic progressions (see e.g., \cite{BMOR18}).

\begin{fact} \label{fact:prime-density}
    For any $k \in \N$ and $B > 1$, there exists $x_0 = x_0(k,B)$ such that for any $x \ge x_0$, there are distinct primes $p_1,p_2,\dots,p_k \in [x, Bx]$ congruent to $1$ modulo $4$.
\end{fact}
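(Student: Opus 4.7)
The plan is to invoke the prime number theorem for arithmetic progressions (Dirichlet--de la Vallée Poussin): for $\gcd(a,m)=1$, the counting function $\pi(N;m,a) \coloneqq |\{p \le N : p \text{ prime},\ p\equiv a \pmod m\}|$ satisfies $\pi(N;m,a) \sim \frac{1}{\varphi(m)}\cdot \frac{N}{\ln N}$ as $N\to\infty$. Specializing to $m=4$, $a=1$, $\varphi(4)=2$, this gives $\pi(N;4,1) \sim \frac{N}{2\ln N}$.

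Given $k$ and $B>1$, the number of primes in $[x,Bx]$ congruent to $1 \pmod 4$ is
\[
    \pi(Bx;4,1) - \pi(x;4,1) \;=\; \frac{Bx}{2\ln(Bx)} - \frac{x}{2\ln x} + o\!\parens*{\frac{x}{\ln x}} \;=\; (1+o(1)) \cdot \frac{(B-1)\,x}{2\ln x} \mper
\]
Since $\frac{(B-1)x}{2\ln x}\to\infty$ as $x\to\infty$, there is a threshold $x_0 = x_0(k,B)$ beyond which this count is at least $k$. Any $k$ of these primes furnish the desired $p_1,\dots,p_k$.

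No step here is a genuine obstacle: Dirichlet's theorem on primes in APs gives infinitude, and its quantitative refinement (the PNT for APs) gives the density statement needed. If one wishes to avoid the full PNT for APs, the same conclusion follows from the cruder Chebyshev-type lower bound $\pi(N;4,1) \gg N/\ln N$ combined with a Bertrand-postulate-style argument within the residue class, which is also classical. Either reference suffices as a black box, matching the way the lemma is invoked in the paper; see, e.g., \cite{BMOR18} as cited in the excerpt for explicit effective bounds of this flavor.
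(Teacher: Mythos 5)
Your proof is correct and matches the paper's intent: the paper states this as a black-box Fact with only a citation to standard density results for primes in arithmetic progressions, and your argument via the prime number theorem for APs (the count $(1+o(1))\tfrac{(B-1)x}{2\ln x}\to\infty$ of primes $\equiv 1 \pmod 4$ in $[x,Bx]$) is exactly the standard justification being invoked.
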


%% ACKNOWLEDGEMENTS
\section*{Acknowledgements}
R.Y.Z.\ would like to thank Mehtaab Sawhney for his extensive knowledge of combinatorics and helpful references within.
S.M.\ would like to thank Ryan O'Donnell for helpful conversations about expanding Cayley graphs.
J.H.\ would like to thank Mitali Bafna for helpful discussions.

%% BIBLIOGRAPHY

\bibliography{main}

\appendix
\section{Free group action and good quantum LDPC codes}
\label{app:free-action}

The main result of~\cite{LH22b} is a construction of good quantum low density parity check (qLDPC) codes with a linear time decoding algorithm, assuming the existence of two-sided lossless expanders with a free group action, which they left as a conjecture. We state their conjecture below.

\begin{conjecture}[\cite{LH22b}, Conjecture 10] \label{conj:LH22}
    For any $\epsilon > 0$, and for any $\beta \in (0, 1]$ and $\epsilon_0 > 0$, there are $d_L, d_R \in \bbN$ satisfying $\frac{d_R}{d_L} \in [\beta, \beta + \epsilon_0]$, a constant $\eta > 0$, and an infinite family of $(d_L, d_R)$-biregular bipartite graphs $\{ Z_i = (L_i, R_i, E_i) \}$ and groups $\{ G_i \}$, satisfying the following properties:
    \begin{enumerate}[label=(\Roman*)]
        \item \label{item:two-sided-lossless} $Z_i$ is a two-sided $(1-\epsilon)$-vertex expander. Namely, any $S \subseteq L_i$ with $|S| \le \eta \cdot |L_i|$ has $\ge (1-\epsilon) d_L \cdot |S|$ neighbors on the right, and any $S \subseteq R_i$ with $|S| \le \eta \cdot |R_i|$ has $\ge (1-\epsilon) d_R \cdot |S|$ neighbors on the left.
        \item \label{item:free-group-action} $|G_i| = O(|Z_i|)$, and $Z_i$ has a free $G_i$-action.
    \end{enumerate}
\end{conjecture}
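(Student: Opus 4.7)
}
Property~\ref{item:two-sided-lossless} is exactly the content of \Cref{thm:main}, so the remaining task is to exhibit, for each graph $Z_n$ in the constructed family, a group $G_n$ with $|G_n| = O(|Z_n|)$ that acts freely on $Z_n$. The plan is to let $G_n = \Gamma = \PSL_2(\F_q)$ (where $q$ is the prime chosen in the proof of \Cref{lem:base-graph}) act on everything by left multiplication on the ``group coordinate'' and verify that this action descends to an action on the tripartite line product that preserves all edges.

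First I would record the natural $\Gamma$-action on the cubical complex $X = \Cay(\Gamma;(A_1,\dots,A_k))$: for $\gamma\in\Gamma$, the map $(g,x)\mapsto(\gamma g,x)$ on $X(0)$ extends to an automorphism of $X$, sending the $k$-face $\{(f_x,x)\}_{x\in\zo^k}$ to $\{(\gamma f_x,x)\}_{x\in\zo^k}$, since this transformation preserves the relation $f_x^{-1}f_{x\oplus e_i}\in A_i$ for all $i,x$. This action is free on $X(0)$ because $\Gamma$ acts freely on itself by left multiplication, and a fortiori free on $X(k)$ and on the middle set $M = \Gamma\times\calC$. The same action works on the second complex $X'$ used to build $G_R$.

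The key step is verifying that the action preserves the edges of $Z = (G_L,G_R)\diamond H$. This in turn reduces to showing that the neighbor-indexing functions $\LNbr$ and $\RNbr$ are $\Gamma$-equivariant, i.e.\ $\LNbr_{\gamma u}(i) = \gamma\cdot\LNbr_u(i)$ and similarly for $\RNbr$. Recall from the proof of \Cref{lem:base-graph} that these functions were defined by indexing the neighbors of $u$ according to the \emph{signature} of the containing $k$-face. The crucial observation is that the signature of a face $f$ (the labeling of hypercube edges by the elements $f_x^{-1}f_{x\oplus e_i}\in A_i$) is invariant under left multiplication by $\Gamma$, because $(\gamma f_x)^{-1}(\gamma f_{x\oplus e_i}) = f_x^{-1}f_{x\oplus e_i}$. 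Consequently, the restriction $L\subseteq\ul{L}$ (defined as those faces with signature in a fixed subcollection $\calS_L$) is $\Gamma$-invariant, and the ordering on neighbors of $u$ that $\LNbr_u$ realizes transports covariantly to $\gamma u$. The same argument applies to $R$ and $\RNbr$. Once this equivariance is established, the definition of the tripartite line product (which places the fixed gadget $H$ at every middle vertex via these labelings) makes $\Gamma$-invariance of the edges of $Z$ immediate. Finally, $|\Gamma|=q(q^2-1)/2$, while $|V(Z)|=|L|+|R|=|M|\cdot(D_L+D_R)/k = \Theta(|\Gamma|)$ (since $|\calC|=k$ and $D_L,D_R$ are fixed constants), so $|\Gamma|=O(|V(Z)|)$ as required.

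The main conceptual obstacle is the equivariance of the labeling, which is the only place where our freedom in the construction could have broken the symmetry. It is essential here that in the proof of \Cref{lem:base-graph} we indexed neighbors by a \emph{signature}, which is an intrinsic, translation-invariant invariant of a face, rather than by an arbitrary ordering of the $D_L$ (resp.\ $D_R$) neighbors of each $u\in M$. Had we chosen arbitrary labelings, the same graph $Z$ would still be a two-sided lossless expander, but the natural $\Gamma$-action would no longer preserve its edge set. Everything else (freeness, the linear bound on $|G_n|$, verification that the action descends to $Z$) then follows formally.
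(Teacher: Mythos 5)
Your proposal is correct and follows essentially the same route as the paper's proof in Appendix~\ref{app:free-action}: left multiplication by $\Gamma = \PSL_2(\F_q)$ on the group coordinate, freeness inherited from $\Gamma$ acting on itself, signature-invariance under left translation giving equivariance of $\LNbr$ and $\RNbr$ (hence preservation of the gadget-placed edges and of the signature-restricted sets $L$ and $R$), and the linear size bound. Your closing remark correctly pinpoints the same design choice the paper relies on, namely that neighbors are indexed by signatures rather than arbitrarily.
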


\noindent
Lin and M. Hsieh used such two-sided lossless expanders to construct good qLDPC codes. 

\begin{theorem}[\cite{LH22b}, Theorem 9 and Theorem 14] \label{thm:LH22}
    Assuming~\Cref{conj:LH22}, then for all $r \in (0, 1)$, there exists $\delta > 0$, $w \in \bbN$ and a infinite family of quantum error-correcting codes $C = \{ C_i \}_{i \in \bbN}$ with parameters $[[n_i, k_i, d_i]]$, such that $k_i/n_i > r$, $d_i/n_i > \delta$, and all stabilizers of $C_i$ have weight $w$. Furthermore, $C$ has a linear time decoding algorithm.
\end{theorem}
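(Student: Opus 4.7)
The plan is to instantiate the Lin--Hsieh framework \cite{LH22b} with the two-sided lossless expanders from \Cref{thm:main}: we first verify in \Cref{app:free-action} that our graphs admit a free action by a group of size linear in the number of vertices, which together with the two-sided lossless expansion fulfills \Cref{conj:LH22}. The theorem then follows by plugging these graphs into the Lin--Hsieh construction and invoking their analysis. For completeness, I will outline the three key steps of that construction and where our inputs are used.

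Given a two-sided $(1-\eps)$-vertex expander $Z$ of constant biregularity $(d_L,d_R)$ with a free $G$-action, the Lin--Hsieh construction produces a CSS code via a balanced product of $Z$ with a small inner code, followed by quotienting by $G$. The resulting parity-check matrices $H_X$ and $H_Z$ satisfy $H_X H_Z^\top = 0$ by construction, and each row has bounded weight $w$, since both $Z$ and the inner code have constant degree/size; this gives the LDPC property with constant check weight $w$. The rate bound $k_i/n_i > r$ is obtained by a chain-complex dimension computation: the free $G$-action and the bipartite sizes $|L|/|R| = d_R/d_L$ determine a deterministic contribution to $\dim \ker H_Z - \mathrm{rank}\, H_X$, and by tuning $\beta = d_R/d_L$ close to a carefully chosen value (which is why \Cref{conj:LH22} is stated with arbitrary imbalance), this contribution exceeds $r \cdot n_i$.

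The main technical step is the distance bound $d_i/n_i > \delta$, which is exactly where two-sided lossless expansion with $\eps$ small (so that $\eps < 1/2$) is crucial. A nontrivial low-weight logical operator corresponds to a low-weight vector in $\ker H_Z \setminus \mathrm{image}(H_X^\top)$ (or its dual); a small-set expansion / peeling argument in the spirit of Sipser--Spielman \cite{SS96} shows that such a vector cannot have support below $\delta n_i$, since the unique-neighbor property guaranteed by $(1-\eps)$-expansion produces an unmatched check. Linear-time decoding then adapts the classical flip decoder: iteratively flip a qubit that strictly decreases a syndrome-based potential, with lossless expansion guaranteeing the existence of such a flip below the decoding radius and bounding the total number of flips by $O(n_i)$.

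The main obstacle is the distance analysis, which is substantially more delicate than in the classical case: one must work modulo the stabilizer group, handle both $X$- and $Z$-type errors in parallel using \emph{both} sides of the lossless expansion guarantee, and carefully track how the free $G$-action interacts with the balanced product so that logical representatives cannot be reduced to low-weight form. All of these ingredients are developed in detail in \cite{LH22b}, and our only contribution here is supplying the long-missing input: an explicit family satisfying \Cref{conj:LH22}.
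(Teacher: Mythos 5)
Your proposal is correct and takes the same route as the paper: \Cref{thm:LH22} is imported verbatim from \cite{LH22b} (their Theorems 9 and 14) and is not reproved here -- the paper's only contribution to this statement is verifying \Cref{conj:LH22} via \Cref{thm:main} and the free-action proposition in \Cref{app:free-action}, exactly as you say. Your sketch of the internals of the Lin--Hsieh argument (balanced product, dimension count for rate, expansion-based distance bound, flip decoder) is a reasonable but inessential addition, since those details live entirely in the cited work.
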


In what follows, we show that the graphs we construct in~\Cref{sec:construction} resolve~\Cref{conj:LH22}, thereby giving a new instantiation of qLDPC codes via the framework of~\cite{LH22b}.
We have already proved Condition~\ref{item:two-sided-lossless} in~\Cref{thm:main}.
It remains simply to check that the groups $G_i$ satisfying Condition~\ref{item:free-group-action} exist.

\begin{proposition}
    The graph $Z$ constructed in~\Cref{sec:construction}, using Cayley cubical complexes over $\Gamma = \PSL(2, \bbF_q)$, has a free $\Gamma$-action.
\end{proposition}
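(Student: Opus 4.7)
The plan is to exhibit a natural $\Gamma$-action on $Z$ by left multiplication on the group coordinate, and verify three things: it preserves the vertex set $L \cup R$, it preserves the edge set, and it is free.

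Recall that $L \subseteq X(k)$ and $R \subseteq X'(k)$ consist of $k$-faces of cubical complexes over $\Gamma = \PSL(2, \bbF_q)$, so a vertex $f \in L$ has the form $f = \{(f_x, x)\}_{x \in \{0,1\}^k}$ for group elements $f_x \in \Gamma$. I would define the action of $g \in \Gamma$ on such a face by $g \cdot f \coloneqq \{(gf_x, x)\}_{x \in \{0,1\}^k}$, and similarly on $R$. The middle vertices $M = \Gamma \times \mathcal{C}$ (which are not vertices of $Z$ but are needed to describe its edges) transform by $g \cdot (h, x) \coloneqq (gh, x)$.

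The first step is to check that the action preserves the vertex sets $L$ and $R$. Since these are defined as the faces with signatures lying in fixed collections $\mathcal{S}_L, \mathcal{S}_R$, this reduces to showing that signatures are invariant under left multiplication. A direct computation gives $(gf_x)^{-1}(gf_{x \oplus e_i}) = f_x^{-1} f_{x \oplus e_i}$, so $\Signature(g \cdot f) = \Signature(f)$, and hence $g \cdot L = L$, $g \cdot R = R$.

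Next, I would verify edge preservation. Edges of $Z$ arise from the tripartite line product: for each $u \in M$ and each edge $(i, j) \in H$, we place an edge between $\LNbr_u(i)$ and $\RNbr_u(j)$. The crucial point is that $\LNbr_{g \cdot u}(i) = g \cdot \LNbr_u(i)$ for all $u \in M$ and $i \in [D_L]$, and likewise for $\RNbr$: this holds because $\LNbr_u(i)$ is characterized as the unique neighbor of $u$ in $L$ with signature $\ell_i$, and both incidence ($f \ni u \iff g \cdot f \ni g \cdot u$) and signatures are $\Gamma$-equivariant by the previous step. Hence every gadget edge between $\LNbr_u(i)$ and $\RNbr_u(j)$ is mapped to the gadget edge between $\LNbr_{g \cdot u}(i)$ and $\RNbr_{g \cdot u}(j)$.

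Freeness is immediate: if $g \cdot f = f$ for some $f \in L$, then $gf_{\vec{0}} = f_{\vec{0}}$, forcing $g = e$. Combining with the degree counts $|L| = |\Gamma| \cdot D_L$ and $|R| = |\Gamma| \cdot D_R$ yields $|\Gamma| = \Theta(|Z|)$, satisfying the size requirement in \Cref{conj:LH22}. The only substantive step is the equivariance of the $\LNbr, \RNbr$ labelings, which is the one place where the signature-based construction of the base graph (as opposed to an arbitrary $(k, D)$-biregular bipartite graph) is essential; I do not foresee significant obstacles beyond this.
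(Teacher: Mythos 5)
Your proposal is correct and follows essentially the same route as the paper's proof: left multiplication on the group coordinate, invariance of signatures under this action, equivariance of the $\LNbr/\RNbr$ labelings (the key point where the signature-based neighbor indexing is used), freeness from $\Gamma$ acting freely on itself, and the linear size count. No gaps.
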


\begin{proof}
    We begin by recalling some notation. Let $X = \Cay(\Gamma, \calA)$ be a cubical complex over $\Gamma$, where $\calA = \{ A_1, \dots, A_k \}$ are $k$ sets of Cayley cubical generators. The graphs $G_L = (L, M, E_L)$ and $G_R = (M, R, E_R)$ are defined as follows:
    \begin{itemize}
    \item $L = \{ v \in X(k) : \mathrm{Signature}(v) \in \calS_L \}$, where $\calS_L \subseteq \ul{\calS}_L$ is a $D_L$-sized collection of signatures,
    \item $R = \{ v \in X(k) : \mathrm{Signature}(v) \in \calS_R \}$, where $\calS_R \subseteq \ul{\calS}_R$ is a $D_R$-sized collection of signatures (see~\Cref{sec:proof-lem-base-graph}),
    \item $M = \Gamma \times \calH_k$,
    \item $(f, u) \in E_L$ if $u \in f$, and $(u, f) \in E_R$ if $u \in f$.
    \end{itemize}
    Then, the graph $Z$ was constructed by placing a copy of the gadget graph $H$ on the left and right neighbors of each $u \in M$. Precisely, for each edge $(i,j) \in H$, we place an edge between $\LNbr_u(i)$ and $\RNbr_u(j)$. 

    We claim that $Z$ has a free left $\Gamma$-action. This will essentially follow from the observations that $G_L$ and $G_R$ permit a free left $\Gamma$-action, and the placement of the gadget $H$ respects the group structure. 
    
    More concretely, let us define the left $\Gamma$-action on $u = (g, x) \in M$ as follows: 
    \[
        \gamma u := (\gamma g, x).
    \]
    We can also define a left $\Gamma$-action on $\ul{L} = \ul{R} = X(k)$: for $f = \{ (f_x, x) \}_{x \in \{ 0, 1 \}^k}$, we define
    \[
        \gamma f := \{ (\gamma f_x, x) \}_{x \in \calH_k}.
    \]
    It turns out that because the cubical generating sets $A_i$ all act on the right, this defines a legal action on $X(k)$ as well, which we check by verifying $\gamma f \in X(k)$:
    \[
        (\gamma f)_x^{-1} (\gamma f)_{x + e_i} = f_x^{-1} \gamma^{-1} \gamma f_{x + e_i} = f_x^{-1} f_{x+e_i} \in A_i. \numberthis \label{eqn:cubical-group-action}
    \]
    Both the above actions are free because $\Gamma$ acting on itself is free. This will imply that the left $\Gamma$-action on $Z$, which has vertex set a subset of $X(k)$, is free as well.

    \Cref{eqn:cubical-group-action} actually implies something even stronger: acting on the left by $\gamma$ preserves the signature of the cube. It follows that the subsets $L \subseteq \ul{L}$ and $R \subseteq \ul{R}$ also permit a free left $\Gamma$-action, since $L$ and $R$ consist of all cubes with a certain collection of signatures. Now looking at the base graph $G_L$, we define for $(f, u) \in E_L$
    \[
        \gamma (f, u) := (\gamma f, \gamma u).
    \]
    This defines a valid left $\Gamma$-action on $E_L$, since if $u \in f$ then $\gamma u \in \gamma f$. Similarly, we can define for $(u, f) \in E_R$ 
    \[
        \gamma (u, f) := (\gamma u, \gamma f).
    \]
    Note in particular that if $f$ is the neighbor of $u$ with a given signature $\sigma$, then $\gamma f$ is the neighbor of $\gamma u$ with signature $\sigma$.

    Next, we show that the placement of the gadget graph $H$ respects the left $\Gamma$ action. Recall that in~\Cref{sec:proof-lem-base-graph}, $\LNbr_u$ (similarly, $\RNbr_u$) were defined so that $\Signature(\LNbr_u(i)) = \Signature(\LNbr_{u'}(i))$ for any $u, u' \in \Gamma \times \{ \sigma \}$, $\sigma \in \calH_k$. From the above discussion, this implies that 
    \[
        \gamma \LNbr_u(i) = \LNbr_{\gamma u}(i).
    \]
    In particular, under a left $\Gamma$-action, an edge $(\LNbr_u(i), \RNbr_u(j)) \in E$ gets sent to
    \[
        \gamma (\LNbr_u(i), \RNbr_u(j)) := (\gamma \LNbr_u(i), \gamma \RNbr_u(j)) 
        = (\LNbr_{\gamma u}(i), \RNbr_{\gamma u}(j)) 
        \in E.
    \]

    Finally, we check that $\Gamma$ has linear size:
    \[
        |Z| \le 2 |X(k)| = 2|\Gamma| \cdot \prod_{i=1}^k |A_i| = O_k(1) \cdot |\Gamma|. \qedhere
    \]
\end{proof}

\end{document}